\documentclass[11pt,twoside]{amsart}
\usepackage[margin=1.25in]{geometry}
\usepackage{amssymb}
\usepackage{mathtools}
\usepackage{tikz,xcolor}
\usepackage{ytableau}
\usepackage[shortlabels]{enumitem}
\usepackage{url}
\usepackage{subcaption}
\usepackage{hyperref}

\title{An RSK correspondence for cylindric tableaux}
\author{Alexander Dobner}

\newcommand{\Z}{\mathbb{Z}}
\newcommand{\N}{\mathbb{N}}
\newcommand{\R}{\mathbb{R}}
\newcommand{\sh}{\mathrm{sh}}
\newcommand{\Rect}{\mathrm{Rect}}
\newcommand{\LNE}{\mathrm{LNE}}
\newcommand{\MCW}{\mathrm{MCW}}
\newcommand{\tr}{\mathrm{tr}}
\newcommand{\wt}{\mathrm{wt}}

\newtheorem{theorem}{Theorem}[section]
\newtheorem{corollary}[theorem]{Corollary}
\newtheorem{lemma}[theorem]{Lemma}
\newtheorem{proposition}[theorem]{Proposition}
\theoremstyle{remark}
\newtheorem{remark}[theorem]{Remark}

\begin{document}
\begin{abstract}
    This paper establishes an analogue of the Robinson--Schensted correspondence for cylindric tableaux. In particular, for any pair of positive integers $(d,L)$, we construct a bijection between permutations that avoid the patterns $d\cdots 1 (d+1)$ and $1\cdots (L+1)$ and pairs of $(d,L)$-cylindric standard Young tableaux with a common shape. This arises as a special case of a Knuth-type generalization involving cylindric semistandard tableaux and a further generalization involving oscillating tableaux. Using these results, we construct several other bijections and derive enumerative consequences involving cylindric tableaux and pattern-avoiding permutations. For example, we give an asymptotic for the number of permutations in $S_n$ that avoid the patterns $d\cdots 1 (d+1)$ and $1\cdots (L+1)$ as $n\to\infty$.
\end{abstract}

\maketitle

\section{Introduction}

\subsection{Overview}

The \emph{Robinson--Schensted (RS) correspondence} \cite{schensted1961} is a bijection between permutations and pairs of standard Young tableaux of the same shape. It has an extension known as the \emph{Robinson--Schensted--Knuth (RSK) correspondence} \cite{knuth1970} that gives a bijection between matrices with nonnegative entries and pairs of semistandard Young tableaux of the same shape. 

In various affine and quantum settings, \emph{cylindric tableaux} play a parallel role to that of classical tableaux. In particular, these cylindric analogues arise in the representation theory of Hecke algebras \cite{wenzl1988,suzuki2005v} and in the combinatorics of fusion coefficients \cite{morse2012s}. Postnikov \cite{postnikov2005} used cylindric tableaux to define cylindric Schur functions, which he studied in connection with Gromov--Witten invariants.  Cylindric Schur functions have subsequently been studied by many others \cite{mcnamara2006,lam2006,lee2019,korff2020p,huh2025kkc}.

Unlike classical tableaux, which are drawn in the plane, cylindric tableaux are naturally embedded on the surface of a cylinder (see Figure \ref{fig:cssyt}). The underlying combinatorics of cylindric tableaux are less well understood than their classical counterparts. Goodman and Wenzl \cite{goodman1990w} posed the question of whether there is a Robinson--Schensted-type correspondence for these objects. 

The goal of this paper is to develop analogues of the RS and RSK correspondences for cylindric tableaux. The simplest case can be stated as follows.

\begin{theorem}[Cylindric RS correspondence]\label{thm:cRS}
    For all $n,d,L \in \N$, there is an explicit bijective correspondence between
    \begin{enumerate}
        \item permutations in $S_n$ that avoid the patterns $d\cdots 1 (d+1)$ and $1\cdots (L+1)$, and
        \item pairs $(P,Q)$ such that $P$ and $Q$ are $(d,L)$-cylindric standard Young tableaux of size $n$ with the same shape.
    \end{enumerate}
    Under this bijection, taking the inverse of the permutation corresponds to interchanging $P$ and $Q$.
\end{theorem}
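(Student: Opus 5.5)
Since the statement has to be read as shorthand for the more general constructions promised in the abstract, the plan is to deduce it from a cylindric analogue of the Fomin growth-diagram formulation of RS, rather than from row insertion directly. We fix the convention that a $(d,L)$-cylindric shape is a partition $\lambda$ with at most $d$ rows and $\lambda_1-\lambda_d\le L$, read periodically on the cylinder. On this reading, a $(d,L)$-cylindric standard Young tableau of size $n$ is a chain $\emptyset=\lambda^0\subset\lambda^1\subset\cdots\subset\lambda^n$ of such shapes, each step adding one box, and we do not require the shapes to be translated back to a fixed origin.

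We place the permutation matrix of $w\in S_n$ in an $n\times n$ grid and try to fill every lattice point $(i,j)$ with a cylindric shape $\lambda^{(i,j)}$. The boundary conditions are $\lambda^{(0,j)}=\lambda^{(i,0)}=\emptyset$, and the rule is local: across each unit square the four shapes must satisfy a cylindric version of Fomin's local rules. In the classical rules, when $\mu\neq\nu$ both cover $\kappa$ we take $\rho=\mu\cup\nu$, and when $\mu=\nu=\kappa\cup\{\text{box in row } r\}$ with no cross we add a box in row $r+1$. In the cylindric version, ``row $r+1$'' is read modulo $d$ on the cylinder, so a box pushed out of row $d$ wraps around to row $1$, shifted by the period. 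The cross case, in which a box is added to row $1$, should become ``add a box in the first row in which adding a box is cylindrically allowed''.

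Given such rules, the chains along the top edge and along the right edge are the pair $(P,Q)$. Transposing the grid is a symmetry of the rules, so interchanging $P$ and $Q$ corresponds to $w\mapsto w^{-1}$. Bijectivity follows from the standard fact that each local rule is invertible: $(\kappa,\text{cross?})$ is determined by $(\mu,\nu,\rho)$. We then need to check the following.
\begin{enumerate}
\item The constraints $\ell(\lambda)\le d$ and $\lambda_1-\lambda_d\le L$ are preserved by every forward local rule exactly when $w$ avoids $d\cdots1(d+1)$ and $1\cdots(L+1)$.
\item Conversely, every pair of cylindric tableaux of a common shape can be grown backwards to the empty boundary, and the recovered matrix is a permutation matrix.
\end{enumerate}

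The main obstacle is step~(1): identifying the pattern conditions. We would first relate the shape at $(i,j)$ to Greene-type invariants of the restricted subword $w|_{[i]\times[j]}$. Avoiding $1\cdots(L+1)$ bounds increasing subsequences, and hence row lengths, but the constraint that actually matters is $\lambda_1-\lambda_d\le L$. The pattern $d\cdots1(d+1)$, a decreasing run of length $d$ followed by a larger element, should be exactly the configuration that forces a wrap-around that is incompatible with the cylinder.

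We would therefore prove a lemma of the form: at each corner the chain stays cylindric if and only if no occurrence of either pattern is completed there. We would argue by induction on $i+j$, checking the single square in which a violation first appears. If the classical shapes fail to stay cylindric, we instead define the cylindric shapes directly via cylindric Greene invariants, namely maximal unions of chains on the cylindric poset. We would then check that the local rules hold as a consequence, in the style of Fomin's proof.
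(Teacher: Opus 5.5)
Your overall architecture (Fomin growth diagrams in which a box bumped out of row $d$ re-enters row $1$) is the paper's, and your convention for $(d,L)$-cylindric standard tableaux is fine. But the local rule you propose is not invertible, so bijectivity does not follow from any ``standard fact''. The triple $(\mu,\nu,\rho)=(\kappa,\kappa,\kappa{+}(1))$ with $\kappa_d>0$ arises in two ways. It comes from a cross whose south-west corner is $\kappa$. It also comes from a cell with no cross whose south-west corner is $\kappa$ minus one box in row $d$, via the wrap-around move. Concretely, for $d=2$ your rule sends both $213$ and $321$ to $P=Q=(\emptyset,(1),(1,1),(2,1))$.

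This also shows that step (1) is misconceived. Under wrap-around, $\ell(\lambda)\le d$ is preserved automatically. Moreover, $213$ contains $d\cdots 1(d+1)$ yet never violates $\lambda_1-\lambda_d\le L$. The forbidden pattern shows up as a collision, not as a shape violation. (With the cross always placed in row $1$, your rule is the Bloom--Saracino map, which is not injective.)

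The missing idea is the extra clause in the $d$-RSK local rule: a cross is allowed only when $\kappa_d=0$. With this clause, backward growth is unique, since $\kappa_d-m=\min(\mu_d,\nu_d)+\max(\mu_1,\nu_1)-\rho_1$ and one of $\kappa_d,m$ must vanish. Forward growth from a filling then succeeds exactly when the filling avoids $d\cdots1(d+1)$. That equivalence rests on the lemma $\ell(\kappa)=\min(d,K)$, where $K$ is the longest se-chain in $\Rect_p$. The paper proves it by observing that a cell whose top-left or bottom-right partition has fewer than $d$ parts obeys the ordinary RSK rule, and then applying classical Schensted on that region.

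Your treatment of $L$ has a second gap. Making the cross rule depend on $L$ (``first row in which adding a box is cylindrically allowed'') breaks step (1) again. For $d=2$, $L=1$ it sends $12$, which contains $1\cdots(L+1)$, to the same pair $(\emptyset,(1),(1,1))$ as $21$.

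Your heuristic that avoiding $1\cdots(L+1)$ bounds row lengths is also false under wrap-around. For $d=1$ and $w=21$ the corner shape is $(2)$, while the longest increasing subsequence has length $1$. In the paper's $7\times 7$ example the corner shape is $(9,9,5)$, but the longest NE-chain is $7$. So the corner shape carries no Greene-type information about increasing subsequences, and the ``cylindric Greene invariants'' you fall back on are neither defined nor available.

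The paper instead keeps the correspondence independent of $L$. It proves that the longest NE-chain equals the minimum cylindric width $\MCW_d(T)=\max_i(\lambda^{(i)}_1-\lambda^{(i-1)}_d)$ of the whole boundary chain. This is a statistic of all the shapes in $P$ and $Q$, not of the common shape.

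The proof is an induction over arbitrary Young-diagram domains, so oscillating boundaries are needed even in the square case. One removes a corner cell $C$ with bottom-left corner $(x,y)$ and uses the first coordinate of the local rule, $\rho_1-\min(\mu_d,\nu_d)=m+\max(\mu_1,\nu_1)-\kappa_d$. If $m=0$, the boundary $\MCW_d$ is unchanged. If $\kappa_d=0$, then $\Rect_{(x+1,y)}$ and $\Rect_{(x,y+1)}$ lie in the RSK region, so $\nu_1$ and $\mu_1$ are the classical longest NE-chain lengths there.

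Your ``first violation'' induction on $i+j$ can be made to work for permutations once the correct rule is in place. But it works only through this identity, which is the real content of the $L$-part and is absent from your proposal.
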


This result is a special case of a more general Knuth-type correspondence involving cylindric semistandard tableaux. The relevant definitions and precise statements of these results will be given in Sections \ref{sec:definitions}, \ref{sec:mainresults}, and \ref{sec:corollaries}.

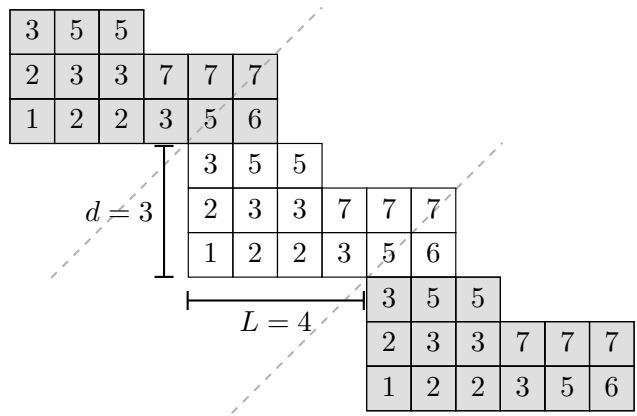
\begin{figure}
    \centering
    \begin{tikzpicture}
        \node at (0,0) {\ydiagram[*(gray!25)]{3,6,6,0,0,0,8+3,8+6,8+6}};
        \draw[-,gray!60,thick,dashed] (-6*.6,-.6*2+.3) -- (-0*0.6,.6*4+.3);
        \draw[-,gray!60,thick,dashed] (-2*.6,-5*.6+.3) -- (4*.6,1*.6+.3);
        \draw[|-|,thick] (-3.5*.6,-1.5*.6) -- (-3.5*.6,1.5*.6-.03) node[midway,left] {$d=3$};
        \draw[|-|,thick] (-3*.6,-2*.6) -- (1*.6-.03,-2*.6) node[midway,below] {$L=4$};
        \node {\ytableaushort{355,233777,122356,
    \none\none\none\none 355,
    \none\none\none\none 233777,
    \none\none\none\none 122356,
    \none\none\none\none\none\none\none\none 355,
    \none\none\none\none\none\none\none\none 233777,
    \none\none\none\none\none\none\none\none 122356}};
    \end{tikzpicture}
    \caption{In white: a semistandard Young tableau. This tableau is $(3,4)$-cylindric since the entries are strictly increasing along columns even after adding shifted copies. The picture can be drawn on a cylinder by gluing the two dashed lines together.}
    \label{fig:cssyt}
\end{figure}

The cylindric RS correspondence can be viewed as a two-parameter deformation of the classical case. Indeed, in the $d\to\infty$ and $L\to\infty$ limit, the cylindric RS correspondence coincides with the usual RS correspondence. In the case where $d$ is fixed and $L\to\infty$, we get a bijection that we call the \emph{$d$-RS correspondence} between permutations that avoid the pattern $d\cdots 1 (d+1)$ and pairs of standard Young tableaux with a common shape and at most $d$ rows. It is possible to define these correspondences in terms of a modified Schensted insertion algorithm (the idea being that whenever a number is bumped out of row $d$, it is reinserted into row 1 rather than into row $d+1$). However, we will instead use Fomin's growth diagrams \cite{fomin1986}. This makes the proofs more transparent, and it yields further generalizations involving oscillating tableaux with no additional effort.

As is typical in proofs involving growth diagrams, we will treat tableaux as sequences of partitions rather than fillings of Young diagrams. A notable (but completely elementary) feature of our work in this regard is that we give a definition for cylindric semistandard tableaux in terms of a relation that we call \emph{$(d,L)$-interlacing}. We also introduce a new statistic on tableaux that we call the \emph{minimum cylindric width}. We show that under the $d$-RS correspondence, the length of the longest increasing subsequence of a permutation is equal to the minimum cylindric width statistic on the tableau side. This is in contrast to the usual RS correspondence where the length of the longest increasing subsequence is equal to the length of the first row of the tableaux.

Although the connection between cylindric tableaux and pattern-avoiding permutations that we establish in this paper is new, there are some natural predecessors of our work in the literature. In particular, Bloom and Saracino \cite{bloom2012s} defined a certain map using growth diagrams which is essentially the $d$-RS correspondence. Their work does not involve cylindric tableaux, however. On the other hand, Elizalde \cite{elizalde2025} used growth diagrams to reformulate a correspondence of Neyman \cite{neyman2015} which involves cylindric tableaux but not permutations. Our work shows that these growth diagrams are closely related. In Section \ref{sec:connections} we will give a direct comparison of the local rules used in our growth diagrams and those used by Elizalde and Bloom and Saracino. In Section \ref{sec:skewdRSK} we will generalize Neyman's correspondence.

We conclude this overview by mentioning that we were led to discover Theorem~\ref{thm:cRS} empirically while investigating certain random matrix ensembles. We describe this story in more detail in Section~\ref{sec:asymptotics}. There, we will use a result from our companion paper \cite{dobner2026} together with the cylindric RS correspondence to derive an asymptotic formula for the number of permutations avoiding the patterns $d\cdots 1 (d+1)$ and $1\cdots (L+1)$. Such results are of interest in the theory of pattern-avoiding permutations, and we are not aware of any previous results on this particular enumeration problem.

\subsection{Outline}

The remainder of this paper is organized as follows. In Section~\ref{sec:definitions} we establish our notation and the necessary background on partitions, Young diagrams, and pattern avoidance, and we formally define various notions of cylindric tableaux. In Section~\ref{sec:mainresults} we define growth diagrams and state our main results. In Section~\ref{sec:corollaries} we derive some immediate corollaries of our main results (in particular, the cylindric RS and cylindric RSK correspondences). In Sections~\ref{sec:dRSKchains} and \ref{sec:dRSKgrowth} we give the proofs of the main results. In Section~\ref{sec:skewdRSK} we show how our growth diagrams can be modified to construct a different correspondence involving skew cylindric tableaux. This is a generalization of a correspondence of Neyman \cite{neyman2015}.

The remaining sections give various auxiliary consequences of the correspondences described in Sections 3--7.  In Section~\ref{sec:dual}, we discuss cylindric conjugation and cylindric row-strict tableaux, and how conjugation can be combined with the correspondences. In Section~\ref{sec:fillings_cors}, we derive some additional bijections involving fillings of Young diagrams. In Section~\ref{sec:permutations}, we prove several results on permutations in $S_n$ that avoid the patterns $d\cdots 1 (d+1)$ and $1\cdots (L+1)$ such as an asymptotic formula for the number of such permutations as $n\to\infty$. In Section~\ref{sec:connections}, we discuss how the local rules used in our growth diagrams are related to local rules used by Bloom and Saracino \cite{bloom2012s}, and Elizalde \cite{elizalde2025}. In Section \ref{sec:continuous}, we describe continuous piecewise-linear generalizations of our correspondences.

\subsection{Acknowledgements}
The author thanks George H. Seelinger for many helpful discussions on the Robinson--Schensted correspondence and related topics. The author also thanks Jeff Lagarias for his encouragement in pursuing the results that led to this paper.

\section{Definitions} \label{sec:definitions}
\subsection{Pattern-avoiding permutations} \label{sec:pattern_avoidance}
Let $\tau \in S_m$ and $\pi \in S_n$ be permutations. We say that \emph{$\pi$ contains the pattern $\tau$} if the permutation matrix of $\pi$ contains the permutation matrix of $\tau$ as a submatrix. If this does not occur, we say that \emph{$\pi$ avoids the pattern $\tau$}. It is standard to write the pattern $\tau$ using one-line notation; e.g. $\tau=1324$. 

The patterns that will be relevant to us in this paper are the ones appearing in Theorem~\ref{thm:cRS}. These are $d\cdots 1(d+1)$ and $1\cdots (L+1)$ for positive integers $d$ and $L$. It is easy to describe pattern avoidance for these directly. For example, $\pi$ avoids the pattern $1\cdots (L+1)$ if and only if $\pi$ contains no increasing subsequence of length $L+1$. Similarly, $\pi$ avoids the pattern $d\cdots 1(d+1)$ if and only if there does not exist a subsequence of $\pi$ that consists of $d$ elements in decreasing order followed by one additional element that is larger than the first $d$ elements.

\subsection{Partitions, Young diagrams, and fillings} 
A \emph{partition} is a finite sequence of positive integers $\lambda = (\lambda_1,\ldots,\lambda_m)$ such that $\lambda_1 \geq \lambda_2 \geq \cdots \geq \lambda_m$. The $\lambda_i$'s are called the \emph{parts} of $\lambda$, and the sum of the parts is called the \emph{size} of $\lambda$, denoted $|\lambda|$. The number of parts of $\lambda$ is called the \emph{length} of $\lambda$, denoted $\ell(\lambda)$. The \emph{empty partition}, denoted $\emptyset$, is the unique partition of length zero. In general we will use lowercase Greek letters to denote partitions and subscripts to denote the individual parts of a partition. It is also convenient to extend the subscript notation by defining $\lambda_i=0$ whenever $i>\ell(\lambda)$. Given two partitions $\lambda$ and $\mu$, we say $\lambda \subseteq \mu$ if $\lambda_i \leq \mu_i$ for all $i$. Naturally this implies that $|\lambda| \leq |\mu|$ and $\ell(\lambda) \leq \ell(\mu)$.

Every partition $\lambda$ has an associated pictorial representation called a \emph{Young diagram}. This is a left-justified arrangement of rows of squares such that the $i$th row consists of $\lambda_i$ squares. For concreteness, we define Young diagrams to lie in the first quadrant of the Cartesian plane, and we take the squares to be unit squares with vertices at integer lattice points. The first row of squares is adjacent to the $x$-axis, the second row is directly above the first row, and so on. We will use capital letters to denote Young diagrams. Given a Young diagram $F$, we will refer to the unit squares that make up $F$ as \emph{cells}. We will refer to the vertices of all the cells of $F$ as the \emph{lattice points of $F$}, and we will refer to the sides of the cells as the \emph{edges of $F$}. Given any lattice point $p$ in the first quadrant, we will write $\Rect_p$ to denote the rectangular Young diagram whose upper-right corner is $p$.

\begin{figure}
\centering
\begin{tikzpicture}
    \begin{scope}[scale=0.9]
        % Axes
        \draw[->] (-0.5,0) -- (5,0);
        \draw[->] (0,-0.5) -- (0,4);
        \node[right] at (5,0) {$x$};
        \node[above] at (0,4) {$y$};

        % Row 1: 4 squares
        \draw (0,0) rectangle (4,1);
        \draw (1,0) -- (1,1);
        \draw (2,0) -- (2,1);
        \draw (3,0) -- (3,1);
        
        % Row 2: 3 squares
        \draw (0,1) rectangle (3,2);
        \draw (1,1) -- (1,2);
        \draw (2,1) -- (2,2);
        
        % Row 3: 1 square
        \draw (0,2) rectangle (1,3);
        
        % Draw outer boundary in bold
        \draw[line width=2.4pt] (0,3) -- (1,3) node[midway,above,inner sep=1pt,font=\scriptsize] {$-$} -- (1,2) node[midway,right,inner sep=1pt,font=\scriptsize] {$+$} -- (2,2) node[midway,above,inner sep=1pt,font=\scriptsize] {$-$} -- (3,2) node[midway,above,inner sep=1pt,font=\scriptsize] {$-$} -- (3,1) node[midway,right,inner sep=1pt,font=\scriptsize] {$+$} -- (4,1) node[midway,above,inner sep=1pt,font=\scriptsize] {$-$} -- (4,0) node[midway,right,inner sep=1pt,font=\scriptsize] {$+$};
        
        % Draw lattice points
        \foreach \x in {0,...,4} {
            \foreach \y in {0,...,3} {
                \ifnum\x=0
                    \fill (\x,\y) circle (2pt);
                \else\ifnum\y=0
                    \fill (\x,\y) circle (2pt);
                \else\ifnum\x=1
                    \ifnum\y<4
                        \fill (\x,\y) circle (2pt);
                    \fi
                \else\ifnum\x<4
                    \ifnum\y<3
                        \fill (\x,\y) circle (2pt);
                    \fi
                \else
                    \ifnum\y<2
                        \fill (\x,\y) circle (2pt);
                    \fi
                \fi\fi\fi\fi
            }
        }
    \end{scope}
\end{tikzpicture}
\caption{The Young diagram of the partition $(4,3,1)$. The lattice points of the diagram are depicted as black dots. The outer boundary (shown in bold) is encoded by the type sequence ${+}{-}{+}{-}{-}{+}{-}$. }
\label{fig:young_diagram}
\end{figure}
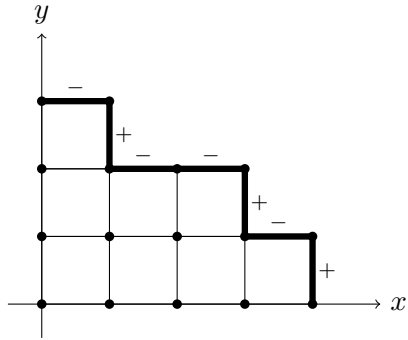

The boundary of any nonempty Young diagram consists of a line segment on the $y$-axis, a line segment on the $x$-axis, and a lattice path going from the positive $x$-axis to the positive $y$-axis consisting of unit steps up and to the left. We call this lattice path the \emph{outer boundary} of the Young diagram. We will encode the outer boundary using what we call a \emph{type sequence}, which is a finite word $w=w_1\cdots w_r$ in the alphabet $\{+, -\}$. The $i$th letter of this sequence is `$+$' if the $i$th step of the outer boundary is a step up, and it is `$-$' if the $i$th step is a step to the left. See Figure \ref{fig:young_diagram} for an example.

A \emph{filling} of a Young diagram is an assignment of nonnegative integers to the cells of the diagram. This is represented pictorially by writing each integer (called an \emph{entry}) inside the corresponding cell. Given a Young diagram and a filling, a \emph{NE-chain} is a sequence of nonzero entries in which each entry is weakly above and weakly to the right of the previous one. The \emph{length} of a NE-chain is the sum of the entries in the chain. A \emph{se-chain} is a sequence of nonzero entries in which each entry lies strictly below and strictly to the right of the previous one. The \emph{length} of a se-chain is the number of entries in the chain. (Note the difference in definitions.) See Figure \ref{fig:NEchain} and \ref{fig:sechain} for examples.

Given a positive integer $d$, we say that a filling \emph{contains the pattern $d \cdots 1(d+1)$} if there exists a sequence of $d+1$ nonzero entries such that the first $d$ of them form a se-chain, and the last one is strictly above and strictly to the right of those in the se-chain. If no such sequence exists, then we say that the filling \emph{avoids the pattern $d \cdots 1(d+1)$}. See Figure \ref{fig:pattern} for an example.

A Young diagram with a filling can be viewed as a generalization of a matrix with nonnegative integer entries. Indeed, given such a matrix we can identify it with the filling of a rectangular Young diagram, although we must reflect the entries vertically to match our convention for indexing the rows of a Young diagram. A permutation matrix is a special case of this, and we can see that under this identification, the definition of pattern avoidance for permutations is consistent with the one we have just given for fillings avoiding the pattern $d \cdots 1(d+1)$.

\begin{figure}
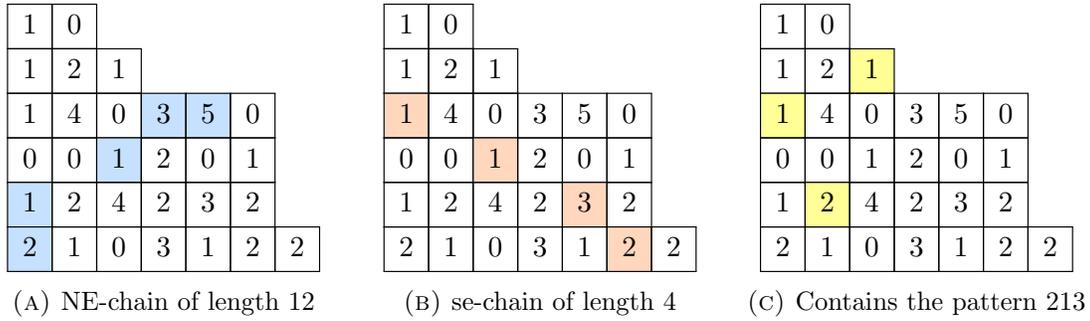

    \centering
    \definecolor{necolor}{RGB}{198,224,255}
    \definecolor{secolor}{RGB}{255,215,189}
    \definecolor{patterncolor}{RGB}{255, 255, 150}
    
    \begin{subfigure}{0.32\textwidth}
        \centering
        \begin{ytableau}
            1 & 0 \\
            1 & 2 & 1 \\
            1 & 4 & 0 & *(necolor) 3 & *(necolor) 5 & 0\\
            0 & 0 & *(necolor) 1 & 2 & 0 & 1 \\
            *(necolor) 1 & 2 & 4 & 2 & 3 & 2 \\
            *(necolor) 2 & 1 & 0 & 3 & 1 & 2 & 2
        \end{ytableau}
        \caption{NE-chain of length $12$}
        \label{fig:NEchain}
    \end{subfigure}
    \begin{subfigure}{0.32\textwidth}
        \centering
        \begin{ytableau}
            1 & 0 \\
            1 & 2 & 1 \\
            *(secolor) 1 & 4 & 0 & 3 & 5 & 0\\
            0 & 0 & *(secolor) 1 & 2 & 0 & 1 \\
            1 & 2 & 4 & 2 & *(secolor) 3 & 2 \\
            2 & 1 & 0 & 3 & 1 & *(secolor) 2 & 2
        \end{ytableau}
        \caption{se-chain of length 4}
        \label{fig:sechain}
    \end{subfigure}
    \begin{subfigure}{0.32\textwidth}
        \centering
        \begin{ytableau}
            1 & 0 \\
            1 & 2 & *(patterncolor) 1 \\
            *(patterncolor) 1 & 4 & 0 & 3 & 5 & 0\\
            0 & 0 & 1 & 2 & 0 & 1 \\
            1 & *(patterncolor) 2 & 4 & 2 & 3 & 2 \\
            2 & 1 & 0 & 3 & 1 & 2 & 2
        \end{ytableau}
        \caption{Contains the pattern $213$}
        \label{fig:pattern}
    \end{subfigure}
    
    \caption{A filling with highlighted examples of a NE-chain, a se-chain, and an instance of the pattern $213$. This filling avoids the pattern $3 2 1 4$.}
\end{figure}

\subsection{Semistandard Young tableaux}
Given two partitions $\alpha$ and $\beta$, we say that \emph{$\alpha$ interlaces $\beta$}, denoted $\alpha \prec \beta$ or $\beta \succ \alpha$, if their parts satisfy the inequalities
\[\beta_1 \geq \alpha_1 \geq \beta_2 \geq \alpha_2 \geq \beta_3 \geq \cdots.\]
A \emph{semistandard Young tableau} is a finite sequence of partitions $T=(\lambda^{(0)},\ldots,\lambda^{(k)})$ such that $\emptyset = \lambda^{(0)} \prec \lambda^{(1)} \prec \ldots \prec \lambda^{(k)}.$ The last partition $\lambda^{(k)}$ is called the \emph{shape} of $T$, denoted $\sh(T)$. The \emph{weight} of $T$, denoted $\wt(T)$, is the vector of nonnegative integers whose $i$th entry is $\wt(T)_i = |\lambda^{(i)}| - |\lambda^{(i-1)}|$ for $i=1,\ldots,k$. The \emph{size} of $T$ is the size of its shape, i.e. $|\sh(T)|$.

It is well known (see \cite[p. 5]{macdonald1995}) that any semistandard Young tableau $T$ corresponds to a filling of the Young diagram of $\sh(T)$ with positive entries that are weakly increasing along rows from left to right and strictly increasing along columns from bottom to top. To obtain this filling, the entry $i$ is placed in all the cells that are in the Young diagram of $\lambda^{(i)}$ but not in the Young diagram of $\lambda^{(i-1)}$. Hence, the number of cells with entry $i$ is equal to $\wt(T)_i$. An example is shown in Figure \ref{fig:ssyt}.

Given a positive integer $d$, we say that a partition $\lambda$ is a \emph{$d$-partition} if $\ell(\lambda)\leq d$. Given two $d$-partitions $\alpha$ and $\beta$, we say that \emph{$\alpha$ $(d,L)$-interlaces $\beta$}, denoted $\alpha \prec_{(d,L)} \beta$ or $\beta \succ_{(d,L)} \alpha$, if their parts satisfy the inequalities
\[
\beta_1 \geq \alpha_1 \geq \beta_2 \geq \alpha_2 \geq \cdots \geq \beta_d \geq \alpha_d \geq \beta_1 - L.
\]
This is equivalent to requiring $\alpha \prec \beta$ and $\beta_1 - \alpha_d \leq L$. If $\alpha$ and $\beta$ are $d$-partitions such that $\alpha \prec \beta$ or $\alpha\succ\beta$ we define their \emph{minimum cylindric width}, denoted $\MCW_d(\alpha,\beta)$, to be the smallest $L$ such that $\alpha \prec_{(d,L)} \beta$ or $\alpha \succ_{(d,L)} \beta$. That is,
\[
\MCW_d(\alpha,\beta) = \begin{cases}
\beta_1 - \alpha_d & \text{if } \alpha \prec \beta, \\
\alpha_1 - \beta_d & \text{if } \alpha \succ \beta.
\end{cases}
\]
Note that if $\alpha \prec \beta$ and $\alpha \succ \beta$ both hold then $\alpha=\beta$, so there is no ambiguity in the definition.

We call a semistandard Young tableau $T=(\lambda^{(0)},\ldots,\lambda^{(k)})$ a \emph{$d$-semistandard Young tableau} if each $\lambda^{(i)}$ is a $d$-partition (equivalently, $\sh(T)$ is a $d$-partition). Furthermore, we call $T$ a \emph{$(d,L)$-cylindric semistandard Young tableau} if $\emptyset=\lambda^{(0)} \prec_{(d,L)} \lambda^{(1)} \prec_{(d,L)} \cdots \prec_{(d,L)} \lambda^{(k)}$. If $T$ is a $d$-semistandard Young tableau, we define the \emph{minimum cylindric width of $T$}, denoted $\MCW_d(T)$, to be the smallest $L$ such that $T$ is $(d,L)$-cylindric. Equivalently,
\[
\MCW_d(T) = \max_{1 \leq i \leq k} \MCW_d(\lambda^{(i-1)},\lambda^{(i)}) = \max_{1 \leq i \leq k} (\lambda^{(i)}_1 - \lambda^{(i-1)}_d).
\]

There is an alternative characterization of $(d,L)$-cylindric semistandard Young tableaux in terms of fillings of Young diagrams. Indeed, given a $d$-semistandard Young tableau $T$, the $(d,L)$-cylindric condition may be checked in the following way. We first consider the pictorial representation of $T$ as a filling of a Young diagram (as described previously). We then augment this picture by adding a copy of the first row shifted up by $d$ units and left by $L$ units. The tableau $T$ is $(d,L)$-cylindric if and only if the entries in this augmented picture are still strictly increasing along columns and there are no gaps between cells in a column. An example is shown in Figure \ref{fig:mcw}. Equivalently, one could also take infinitely many copies of the entire picture shifted by integer multiples of $(-L,d)$ and check the same condition on the columns. This infinite periodic picture is what motivates the term ``cylindric'' in the name of these tableaux (see Figure \ref{fig:cssyt}). 

The pictorial characterization of $(d,L)$-cylindric semistandard Young tableaux described above will not be needed in the proofs of our main results. However, it is useful for some purposes, such as for defining a conjugation operation on cylindric tableaux. We will discuss this further in Section~\ref{sec:dual}.

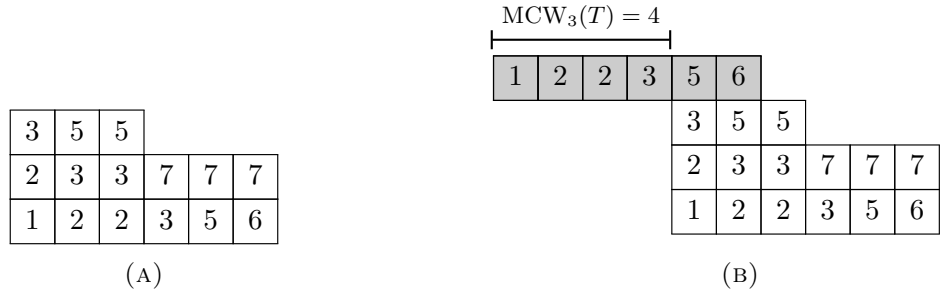
\begin{figure}
    \centering
    \begin{subfigure}{0.48\textwidth} 
        \centering
        \ytableaushort{355,233777,122356}
        \caption{}
        \label{fig:ssyt}
    \end{subfigure}
    \hfill
    \begin{subfigure}{0.48\textwidth}
        \centering
        \begin{tikzpicture}
            \node at (0,0) {\ytableaushort{122356,
                                \none\none\none\none 355,
                                \none\none\none\none 233777,
                                \none\none\none\none 122356}* [*(gray!40)]{6}};
            \draw[|-|,thick] (-3,1.4) -- (-.6,1.4) node[midway,above] {\footnotesize $\MCW_3(T)=4$};
            \node at (3.6,0) {}; % to shift the diagram to the left
        \end{tikzpicture}
        \caption{}
        \label{fig:mcw}
    \end{subfigure}
    \caption{(\textsc{a}) A depiction of the semistandard Young tableau $T=$ \\ \centerline{
    $\big(\emptyset \prec (1) \prec (3,1) \prec (4,3,1) \prec (4,3,1) \prec (5,3,3) \prec (6,3,3) \prec (6,6,3)\big)$.} \\
    In this case $\wt(T)=(1,3,4,0,3,1,3)$, and $T$ is a $d$-semistandard Young tableau for any $d\geq 3$. \\
    (\textsc{b}) A depiction of $T$ augmented with a shifted copy of the first row above row $d=3$. The value of $\MCW_3(T)$ is the minimum shift needed to ensure that the entries in this augmented picture are strictly increasing along columns. It is also required that there are no gaps between cells in a column, so $\MCW_d(T)=6$ for all $d\geq 4$.}
\end{figure}

\subsection{Oscillating tableaux} \label{sec:oscillating}
Given a type sequence $w=w_1\cdots w_r$, we define a \emph{semistandard $w$-oscillating tableau} to be a finite sequence of partitions $\emptyset = \lambda^{(0)},\lambda^{(1)},\ldots,\lambda^{(r)} = \emptyset$ such that for all $1 \leq i \leq r$ if $w_i=+$ then $\lambda^{(i-1)} \prec \lambda^{(i)}$ and if $w_i=-$ then $\lambda^{(i-1)} \succ \lambda^{(i)}$. We will also assume $w$ is part of the data of such a tableau. That is, a sequence $T$ can actually be a semistandard $w$-oscillating tableau for multiple different type sequences $w$, but we will treat these as different objects.

A semistandard $w$-oscillating tableau is called a \emph{$d$-semistandard $w$-oscillating tableau} if all of its constituent partitions are $d$-partitions. Moreover, we call it a \emph{$(d,L)$-cylindric semistandard $w$-oscillating tableau} if we can replace all the interlacing relations in the definition with $(d,L)$-interlacing relations. Given a $d$-semistandard oscillating tableau $T=(\lambda^{(0)},\ldots,\lambda^{(r)})$, we define its minimum cylindric width, denoted $\MCW_d(T)$, to be the smallest $L$ such that $T$ is $(d,L)$-cylindric. Equivalently, $\MCW_d(T) = \max_{1 \leq i \leq r} \MCW_d(\lambda^{(i-1)},\lambda^{(i)})$.

The analogue of \emph{weight} in the oscillating setting is defined as follows. Given a semistandard $w$-oscillating tableau $T=(\emptyset = \lambda^{(0)},\lambda^{(1)},\ldots,\lambda^{(r)} = \emptyset)$, we associate two vectors of nonnegative integers, $\wt^+(T)$ and $\wt^-(T)$, to it. The $i$th entry of $\wt^+(T)$ is $\wt^+(T)_i = |\lambda^{(a_i)}| - |\lambda^{(a_i-1)}|$ where $a_i$ is the index of the $i$th occurrence of `$+$' in $w$. The $i$th entry of $\wt^-(T)$ is $\wt^-(T)_i = |\lambda^{(b_i-1)}| - |\lambda^{(b_i)}|$ where $b_i$ is the index of the $i$th-to-last occurrence of `$-$' in $w$. (Note the difference in the ordering between the two definitions.)

The definitions of $\wt^+(T)$ and $\wt^-(T)$ are chosen with the following special case in mind. Suppose that $w$ is the type sequence consisting of $n$ `$+$'s followed by $m$ `$-$'s. Then a semistandard $w$-oscillating tableau is a sequence $T=(\emptyset = \lambda^{(0)} \prec \cdots \prec \lambda^{(n)} \succ \cdots \succ \lambda^{(n+m)} = \emptyset)$. This tableau $T$ can be split into two semistandard Young tableaux $P$ and $Q$ of shape $\lambda^{(n)}$ that are made up of the first $n$ and last $m$ partitions in the sequence, respectively. By the definitions of weight, we have $\wt(P) = \wt^+(T)$ and $\wt(Q) = \wt^-(T)$.

\subsection{Standard tableaux}
For each of the different kinds of tableaux defined above, we replace the word ``semistandard'' with ``standard'' in the special case where the associated weight vectors consist of all $1$'s. For example, a \emph{standard Young tableau} is a semistandard Young tableau $T$ such that $\wt(T) = (1,1,\ldots,1)$. A \emph{standard $w$-oscillating tableau} is a semistandard $w$-oscillating tableau $T$ such that $\wt^+(T) = (1,1,\ldots,1)$ and $\wt^-(T) = (1,1,\ldots,1)$.

\section{Main results} \label{sec:mainresults}

\subsection{Growth diagrams and local rules}

In order to state our main results we must also define the notion of a \emph{growth diagram}. For our purposes, a growth diagram consists of the following data.
\begin{enumerate}[(i)]
    \item A Young diagram $F$, along with a filling of $F$.
    \item An assignment of a partition to every lattice point of $F$. 
\end{enumerate}

This data is required to satisfy certain constraints depending on the kind of growth diagram under consideration. In this section we will consider \emph{RSK growth diagrams} and \emph{$d$-RSK growth diagrams}. For both of these, we impose a boundary condition: all of the partitions assigned to lattice points on the coordinate axes must be the empty partition.

The individual cells of a growth diagram are always required to satisfy a \emph{local rule}. In an RSK growth diagram, every cell must satisfy the \emph{RSK local rule}, and in a $d$-RSK growth diagram, every cell must satisfy the \emph{$d$-RSK local rule}.

A local rule specifies constraints on the data associated to a single cell.\footnote{Some authors use the term ``local rule'' to refer to a function or algorithm for constructing a single cell in a growth diagram. For our purposes it is more convenient to use a constraint-based approach.}  More precisely, any cell has an entry $m$ and four partitions $\kappa, \mu, \nu, \rho$ assigned to the corners as depicted in Figure \ref{fig:single_cell}. The local rule specifies the permitted values for these five pieces of data. The relevant rules for RSK and $d$-RSK growth diagrams are as follows.

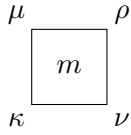
\begin{figure}
\centering
\begin{tikzpicture}
    \draw (0,0) rectangle (1,1);
    \node at (.5,.5) {$m$};
    \node at (-.2,-.2) {$\kappa$};
    \node at (-.2,1.2) {$\mu$};
    \node at (1.2,-.2) {$\nu$};
    \node at (1.2,1.2) {$\rho$};
\end{tikzpicture}
\caption{A single cell in a growth diagram.}
\label{fig:single_cell}
\end{figure}

\vspace{1em}
\textbf{RSK local rule:} It is required that $\mu \succ \kappa \prec \nu$ and $\mu \prec \rho \succ \nu$. Moreover,
\begin{equation} \label{eq:RSK}
    \begin{pmatrix}
        \rho_1 \\ \rho_2 \\ \rho_3 \\ \vdots
    \end{pmatrix}
    +
    \begin{pmatrix}
        0 \\ \kappa_1 \\ \kappa_2 \\ \vdots
    \end{pmatrix}=
    \begin{pmatrix}
        m + \max(\mu_1,\nu_1)\\ \min(\mu_1,\nu_1) + \max(\mu_2,\nu_2) \\ \min(\mu_2,\nu_2) + \max(\mu_3,\nu_3) \\ \vdots
    \end{pmatrix}.
\end{equation}

\vspace{1.5em}

\textbf{$d$-RSK local rule:} It is required that $\mu \succ \kappa \prec \nu$ and $\mu \prec \rho \succ \nu$. Also, all four partitions are $d$-partitions, and it is required that $m=0$ or $\kappa_d=0$. Moreover,
\begin{equation} \label{eq:dRSK}
    \begin{pmatrix}
        \rho_1 \\ \rho_2 \\ \rho_3 \\ \vdots \\ \rho_d
    \end{pmatrix}
    +
    \begin{pmatrix}
        \kappa_d \\ \kappa_1 \\ \kappa_2 \\ \vdots \\ \kappa_{d-1}
    \end{pmatrix}=
    \begin{pmatrix}
        m + \min(\mu_d,\nu_d) + \max(\mu_1,\nu_1)\\ \min(\mu_1,\nu_1) + \max(\mu_2,\nu_2) \\ \min(\mu_2,\nu_2) + \max(\mu_3,\nu_3) \\ \vdots \\ \min(\mu_{d-1},\nu_{d-1}) + \max(\mu_d,\nu_d)
    \end{pmatrix}.
\end{equation}
\vspace{1.5em}

The RSK local rule is well known (cf. \cite[Sec. 4.1]{krattenthaler2006}, \cite[Sec. 3.1]{vanleeuwen2005}). Growth diagrams satisfying this local rule encode the RSK correspondence in a natural way as we will recall shortly. The $d$-RSK local rule is new, but from the definition one can see that it is a ``cyclic'' modification of the RSK local rule. We will show that $d$-RSK growth diagrams encode a bijective correspondence that we call the \emph{$d$-RSK correspondence}. An example of a $d$-RSK growth diagram is shown in Figure~\ref{fig:dRSKexample}.

An important feature of the $d$-RSK local rule is that it reduces to the RSK local rule in certain cases. In particular, if $\min(\mu_d,\nu_d)=0$ then the two rules become equivalent. This will play an important role in our analysis. Any $d$-RSK growth diagram splits into two regions: the ``RSK region'' close to the axes where the cells satisfy $\min(\mu_d,\nu_d)=0$, and the ``cyclic region'' far from the axes where the cells satisfy $\min(\mu_d,\nu_d)>0$. The filling of the diagram will be almost entirely supported in the RSK region. This is because if $\min(\mu_d,\nu_d)>0$ then it is usually the case that $\kappa_d>0$ as well, in which case the $d$-RSK local rule requires that $m=0$. The example in Figure~\ref{fig:dRSKexample} illustrates this point.

\begin{figure}
\begin{tikzpicture}[
    xscale=1.5, yscale=1.5,
    partition/.style={fill=white, inner sep=3.0pt, text height=1.5ex, text depth=0.1ex},
    cross/.style={font=\LARGE, anchor=center}
]
    % Gray backgrounds
    \fill[gray!15] (0,0) rectangle (3,7);
    \fill[gray!15] (3,0) rectangle (4,6);
    \fill[gray!15] (4,0) rectangle (5,5);
    \fill[gray!15] (5,0) rectangle (7,3);
    % Grid
    \draw[step=1, gray!50, thin] (0,0) grid (7,7);
    % Filling
    \node[cross] at (0.5, 3.5) {$1$};
    \node[cross] at (0.5, 5.5) {$1$};
    \node[cross] at (0.5, 6.5) {$1$};
    \node[cross] at (1.5, 2.5) {$1$};
    \node[cross] at (1.5, 4.5) {$2$};
    \node[cross] at (2.5, 2.5) {$1$};
    \node[cross] at (2.5, 6.5) {$2$};
    \node[cross] at (3.5, 3.5) {$1$};
    \node[cross] at (3.5, 4.5) {$1$};
    \node[cross] at (4.5, 0.5) {$1$};
    \node[cross] at (4.5, 1.5) {$1$};
    \node[cross] at (4.5, 3.5) {$1$};
    \node[cross] at (4.5, 5.5) {$2$};
    \node[cross] at (5.5, 0.5) {$1$};
    \node[cross] at (5.5, 1.5) {$3$};
    \node[cross] at (5.5, 2.5) {$1$};
    \node[cross] at (5.5, 3.5) {$1$};
    \node[cross] at (6.5, 0.5) {$1$};
    % Partitions
    \node[partition] at (0,0) {$\emptyset$};
    \node[partition] at (0,1) {$\emptyset$};
    \node[partition] at (0,2) {$\emptyset$};
    \node[partition] at (0,3) {$\emptyset$};
    \node[partition] at (0,4) {$\emptyset$};
    \node[partition] at (0,5) {$\emptyset$};
    \node[partition] at (0,6) {$\emptyset$};
    \node[partition] at (0,7) {$\emptyset$};
    \node[partition] at (1,0) {$\emptyset$};
    \node[partition] at (1,1) {$\emptyset$};
    \node[partition] at (1,2) {$\emptyset$};
    \node[partition] at (1,3) {$\emptyset$};
    \node[partition] at (1,4) {$1$};
    \node[partition] at (1,5) {$1$};
    \node[partition] at (1,6) {$2$};
    \node[partition] at (1,7) {$3$};
    \node[partition] at (2,0) {$\emptyset$};
    \node[partition] at (2,1) {$\emptyset$};
    \node[partition] at (2,2) {$\emptyset$};
    \node[partition] at (2,3) {$1$};
    \node[partition] at (2,4) {$11$};
    \node[partition] at (2,5) {$31$};
    \node[partition] at (2,6) {$32$};
    \node[partition] at (2,7) {$33$};
    \node[partition] at (3,0) {$\emptyset$};
    \node[partition] at (3,1) {$\emptyset$};
    \node[partition] at (3,2) {$\emptyset$};
    \node[partition] at (3,3) {$2$};
    \node[partition] at (3,4) {$21$};
    \node[partition] at (3,5) {$32$};
    \node[partition] at (3,6) {$321$};
    \node[partition] at (3,7) {$531$};
    \node[partition] at (4,0) {$\emptyset$};
    \node[partition] at (4,1) {$\emptyset$};
    \node[partition] at (4,2) {$\emptyset$};
    \node[partition] at (4,3) {$2$};
    \node[partition] at (4,4) {$31$};
    \node[partition] at (4,5) {$43$};
    \node[partition] at (4,6) {$431$};
    \node[partition] at (4,7) {$542$};
    \node[partition] at (5,0) {$\emptyset$};
    \node[partition] at (5,1) {$1$};
    \node[partition] at (5,2) {$2$};
    \node[partition] at (5,3) {$22$};
    \node[partition] at (5,4) {$421$};
    \node[partition] at (5,5) {$442$};
    \node[partition] at (5,6) {$742$};
    \node[partition] at (5,7) {$853$};
    \node[partition] at (6,0) {$\emptyset$};
    \node[partition] at (6,1) {$2$};
    \node[partition] at (6,2) {$51$};
    \node[partition] at (6,3) {$621$};
    \node[partition] at (6,4) {$841$};
    \node[partition] at (6,5) {$844$};
    \node[partition] at (6,6) {$874$};
    \node[partition] at (6,7) {$985$};
    \node[partition] at (7,0) {$\emptyset$};
    \node[partition] at (7,1) {$3$};
    \node[partition] at (7,2) {$52$};
    \node[partition] at (7,3) {$622$};
    \node[partition] at (7,4) {$842$};
    \node[partition] at (7,5) {$944$};
    \node[partition] at (7,6) {$974$};
    \node[partition] at (7,7) {$995$};
\end{tikzpicture}
\caption{A $d$-RSK growth diagram on a $7\times 7$ square Young diagram, with $d=3$. For display purposes, cells whose entry is 0 are left empty. Partitions are written as concatenated lists of their parts. All cells where $\min(\mu_d,\nu_d)=0$ are shaded gray. These cells satisfy the RSK local rule.}
\label{fig:dRSKexample}
\end{figure}
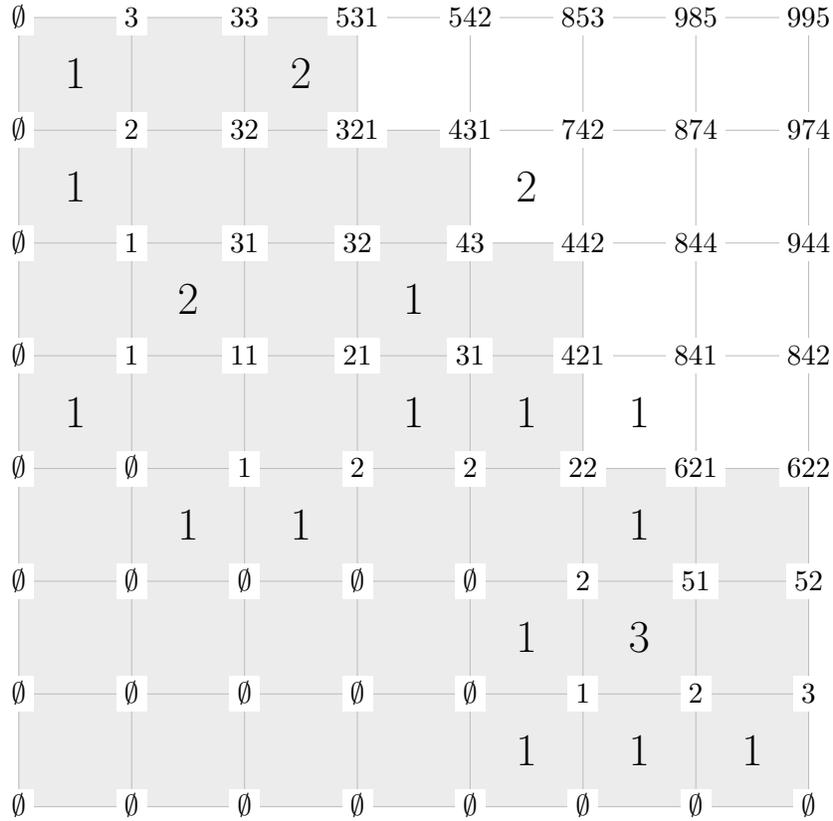

Before we discuss the RSK and $d$-RSK correspondences, we list some basic facts about both types of growth diagram. 

\begin{proposition} \label{prop:dRSKadjacent}
    Fix a $d$-RSK or RSK growth diagram on a Young diagram $F$. Let $\alpha$ and $\beta$ be two partitions assigned to lattice points in the diagram. The following properties hold.
    \begin{enumerate}[(i)]
        \item If $\alpha$ is located weakly below and weakly to the left of $\beta$ in the diagram, then $\alpha \subseteq \beta$.
        \item Suppose $\alpha$ and $\beta$ are assigned to lattice points connected by an edge. If the edge is horizontal and $\alpha$ is to the left of $\beta$, then $|\beta| - |\alpha|$ equals the sum of the entries in the column below the edge. If the edge is vertical and $\alpha$ is below $\beta$, then $|\beta| - |\alpha|$ equals the sum of the entries in the row to the left of that edge.
        \item Reflecting all of the data of the growth diagram across the line $y=x$ gives another growth diagram of the same type (i.e. RSK or $d$-RSK) on the reflected Young diagram $F'$.
    \end{enumerate} 
\end{proposition}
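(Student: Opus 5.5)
We prove the three parts in order, working cell by cell and using only the constraints imposed by the local rules together with the boundary condition.

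For (i), note that both local rules require $\mu \succ \kappa \prec \nu$ and $\mu \prec \rho \succ \nu$. Interlacing $\alpha \prec \beta$ implies $\alpha_i \le \beta_i$ for all $i$, so $\alpha \subseteq \beta$. Hence along every edge of $F$ the partition at the left (or lower) endpoint is contained in the partition at the right (or upper) endpoint. If $\alpha$ sits at $p$ and $\beta$ at $q$, with $q$ weakly above and to the right of $p$, I connect them by a path moving right along the row of $p$ and then up the column of $q$. Since $F$ is a Young diagram (a down-left closed set of cells), I need to check that all the lattice points on this path belong to $F$; choosing the path right-then-up makes this work. A simple induction on path length then gives $\alpha \subseteq \beta$ by transitivity of $\subseteq$.

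For (ii), the key is to compute sizes from the local rule. In the RSK rule, summing all coordinates of \eqref{eq:RSK} gives
\[|\rho| + |\kappa| = m + |\mu| + |\nu|,\]
because $\min(\mu_i,\nu_i)+\max(\mu_i,\nu_i)=\mu_i+\nu_i$. In the $d$-RSK rule, summing the $d$ coordinates of \eqref{eq:dRSK} gives the same identity: the term $\min(\mu_d,\nu_d)$ in the first row pairs with $\max(\mu_d,\nu_d)$ in the last row, and the left side contains each $\kappa_i$ exactly once. Rearranging,
\[|\rho|-|\mu| = m + (|\nu|-|\kappa|).\]
This says that the size increment across the top edge of a cell equals the entry $m$ plus the increment across the bottom edge. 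Now take a horizontal edge and the column of cells below it. Inducting downward to the $x$-axis, where both partitions are $\emptyset$ and the increment is $0$, shows that the increment equals the sum of the entries in that column. The vertical case is symmetric, using $|\rho|-|\nu| = m + |\mu|-|\kappa|$ and the row to the left, ending at the $y$-axis.

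For (iii), reflection across $y=x$ swaps $\mu$ and $\nu$ in each cell and fixes $\kappa$, $\rho$, and $m$. It also maps $F$ to its conjugate $F'$ and preserves the boundary condition. The interlacing constraints $\mu \succ \kappa \prec \nu$ and $\mu \prec \rho \succ \nu$ are symmetric in $\mu$ and $\nu$, as are the right-hand sides of \eqref{eq:RSK} and \eqref{eq:dRSK}, since they involve only $\min$ and $\max$ of $\mu_i,\nu_i$. The remaining $d$-RSK conditions ($d$-partitions, and $m=0$ or $\kappa_d=0$) are unaffected by the swap. Thus each reflected cell satisfies the same local rule.

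None of these steps is a real obstacle. The only points that need care are the telescoping computation for the $d$-RSK rule in (ii), where the wrap-around term $\kappa_d$ must be seen to cancel correctly, and the path-in-$F$ check in (i).
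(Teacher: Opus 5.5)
Your proof is correct and follows the paper's own argument: edge-wise interlacing plus transitivity of $\subseteq$ for (i), the identity $|\rho|+|\kappa| = m+|\mu|+|\nu|$ applied inductively along a column or row down to the axes for (ii), and the symmetry of both local rules under swapping $\mu$ and $\nu$ for (iii). You add detail the paper leaves implicit, namely the path-in-$F$ check and the cancellation of the wrap-around terms $\kappa_d$ and $\min(\mu_d,\nu_d)$ in the $d$-RSK sum (in fact any monotone path works in (i), since the lattice points of $F$ form a down-left closed set).
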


\begin{proof}
    The proofs of these properties are the same for either type of growth diagram. For property (i), if $\alpha$ and $\beta$ are connected by an edge then the local rule implies $\alpha \prec \beta$, which is stronger than $\alpha \subseteq \beta$. The general case follows by transitivity of the $\subseteq$ relation. For property (ii), one may note that the local rule implies $|\rho|+|\kappa|=m+|\mu|+|\nu|$. Inductively applying this relation along any fixed row or column proves the statement. Property (iii) follows from the symmetry of the local rule with respect to swapping $\mu$ and $\nu$.
\end{proof}

\subsection{Growth diagram correspondences}

We now recall how RSK growth diagrams lead to the RSK correspondence. This construction goes back to work of Fomin \cite{fomin1986} and Roby \cite{roby1991}. The main idea is that any RSK growth diagram can be recovered from just a small portion of the data. A precise statement is given as follows.

\begin{theorem}[Oscillating RSK correspondence; see {\cite[Thm. 7]{krattenthaler2006}}]\label{thm:RSKgrowth}
    Let $F$ be a Young diagram and let $w\in \{+,-\}^k$ be the type sequence encoding the outer boundary of $F$. 
    
    \begin{enumerate}[(a)]
        \item For any RSK growth diagram on $F$, the sequence $\lambda^{(0)},\lambda^{(1)},\ldots,\lambda^{(k)}$ of partitions assigned to the outer boundary of $F$ (listed in order starting from the point on the positive $x$-axis) forms a semistandard $w$-oscillating tableau.
        \item Any filling of $F$ can be extended uniquely to an RSK growth diagram on $F$.
        \item Any semistandard $w$-oscillating tableau assigned to the lattice points along the outer boundary of $F$ can be uniquely extended to an RSK growth diagram on $F$.
    \end{enumerate}
    Consequently, there is an explicit bijective correspondence between fillings of $F$ and semistandard $w$-oscillating tableaux.
\end{theorem}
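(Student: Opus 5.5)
The plan is to reduce everything to a single-cell analysis of the RSK local rule and then induct over the cells of $F$. The key lemma is a local bijection. Fix partitions $\mu \succ \kappa \prec \nu$. The map $(m,\rho) \mapsto$ (nothing) is not what we want; instead we show two things.

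\begin{enumerate}[(1)]
    \item \emph{Forward rule.} For every $m \ge 0$ there is exactly one $\rho$ satisfying \eqref{eq:RSK}, and this $\rho$ satisfies $\mu \prec \rho \succ \nu$.
    \item \emph{Backward rule.} Given $\mu \prec \rho \succ \nu$, there is exactly one pair $(m,\kappa)$ with $\mu \succ \kappa \prec \nu$ and $m\ge 0$ satisfying \eqref{eq:RSK}.
\end{enumerate}

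For (1), equation \eqref{eq:RSK} defines $\rho$ explicitly: $\rho_1 = m+\max(\mu_1,\nu_1)$ and $\rho_{i+1} = \min(\mu_i,\nu_i)+\max(\mu_{i+1},\nu_{i+1})-\kappa_i$. The work is to check the inequalities
\[\rho_i \ge \max(\mu_i,\nu_i) \ge \rho_{i+1}.\]
These follow from $\kappa_i \le \min(\mu_i,\nu_i)$ and $\kappa_i \ge \max(\mu_{i+1},\nu_{i+1})$, which are exactly the interlacing hypotheses. For (2), solve \eqref{eq:RSK} top-down, using the symmetric formula $\kappa_i = \min(\mu_i,\nu_i)+\max(\mu_{i+1},\nu_{i+1})-\rho_{i+1}$ (so $\kappa$ is forced) and $m = \rho_1-\max(\mu_1,\nu_1)$. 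Nonnegativity of $m$ and the interlacings $\kappa \prec \mu,\nu$ are checked by the same min/max inequalities, now using $\mu,\nu \prec \rho$. Finiteness, namely that $\kappa$ and $\rho$ have only finitely many nonzero parts, is immediate.

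With the local lemma in hand, the three parts follow.

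\textbf{Part (a).} Every boundary step is an edge of some cell, and the local rule gives $\prec$ along up-steps and $\succ$ along left-steps, with respect to the orientation of travelling along the boundary from the $x$-axis. The endpoints lie on the axes, so they are $\emptyset$. This is precisely the definition of a semistandard $w$-oscillating tableau.

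\textbf{Part (b).} Order the cells so that each cell appears after the cells to its left and below it, for example lexicographically by (column, row). Axis values are $\emptyset$. When we reach a cell, $\kappa$, $\mu$ and $\nu$ are already determined. They satisfy $\mu \succ \kappa \prec \nu$, because the edges $\kappa\mu$ and $\kappa\nu$ are the top and right edges of previously processed cells, or axis edges where all partitions are $\emptyset$. Then (1) gives a unique $\rho$, and induction gives existence and uniqueness.

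\textbf{Part (c).} Proceed in reverse. Repeatedly choose an outer corner cell of the current region, that is, a cell whose top and right edges both lie on the current outer boundary. Its $\mu$, $\rho$ and $\nu$ are known with $\mu\prec\rho\succ\nu$, since these are consecutive boundary steps of types $+$ then $-$. Apply (2) to obtain unique $m$ and $\kappa$, then delete the cell. The new boundary replaces the steps $\nu\to\rho\to\mu$ by $\nu\to\kappa\to\mu$, that is, a $-$ then $+$. This is still an oscillating sequence, so the invariant is preserved.

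The delicate point is the end of this process. Once all cells are removed, the values on the axes must be $\emptyset$, and this is not automatic from (2). To handle it, we track sizes. By $|\rho|+|\kappa|=m+|\mu|+|\nu|$, as in Proposition~\ref{prop:dRSKadjacent}(ii), and $\kappa \subseteq \mu,\nu$, we argue inductively. A partition on an axis segment is sandwiched under interlacing chains starting from the boundary endpoint $\emptyset$. In particular $\emptyset=\lambda^{(0)}$ sits at the end of the $x$-axis, and a partition $\alpha \prec \emptyset$ forces $\alpha=\emptyset$. Propagating this along each axis shows that all axis partitions vanish. Uniqueness in (c) follows because each step was forced.

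The bijection then pairs (b) with (c). The main obstacle is the min/max inequality bookkeeping in the local lemma. The remaining difficulty is this boundary-vanishing check in (c), where I expect to argue by the sandwiching $\kappa\subseteq\mu$ along the axes.
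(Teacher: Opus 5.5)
Your proposal is correct and is essentially the argument the paper relies on. The paper only cites Krattenthaler for this theorem, but its proof of the $d$-RSK analogue (Theorem~\ref{thm:dRSKgrowth}, Section~\ref{sec:dRSKgrowth}) uses exactly your scheme: forward and backward local growth lemmas, cell-by-cell construction in each direction, and in (c) propagating interlacing along the axes from the empty endpoints to force every axis partition to be $\emptyset$. The size bookkeeping you mention is not needed for that last step.

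One small correction in (1). The target inequality $\max(\mu_i,\nu_i)\ge\rho_{i+1}$ is too weak to give both $\mu\prec\rho$ and $\nu\prec\rho$. You must check $\rho_i\ge\max(\mu_i,\nu_i)$ and $\min(\mu_i,\nu_i)\ge\rho_{i+1}$. Both follow at once from $m\ge 0$ and the hypotheses $\max(\mu_{i+1},\nu_{i+1})\le\kappa_i\le\min(\mu_i,\nu_i)$ that you already cite.
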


The last sentence of this theorem follows by putting (a), (b), and (c) together. Indeed, (a) and (c) show that RSK growth diagrams on $F$ are in bijection with semistandard $w$-oscillating tableaux. Similarly, (b) shows that RSK growth diagrams on $F$ are in bijection with fillings of $F$. Composing these bijections gives the desired bijection between fillings and tableaux. 

We can think of Theorem \ref{thm:RSKgrowth} as providing a family of bijections (one for each choice of $F$). We can also view this as a single unified bijection whose domain and codomain are obtained by taking disjoint unions of the relevant sets of fillings and tableaux. We will refer to this unified bijection as the \emph{oscillating RSK correspondence}.

The classical RSK correspondence between matrices and pairs of semistandard Young tableaux can be recovered by restricting the oscillating correspondence to rectangular Young diagrams. When $F$ is rectangular, the type sequence encoding the outer boundary of $F$ is of the form $w=+^n-^m$. For any such $w$, a semistandard $w$-oscillating tableau can be identified with a pair of semistandard Young tableaux of the same shape. In terms of the growth diagram, this pair is obtained by reading off the partitions along the right and top sides of $F$ separately. The filling of $F$ is identified with a matrix in the usual way. The fact that this special case coincides with the classical RSK correspondence defined via Schensted insertion is well known (see \cite{krattenthaler2006}, \cite{roby1991}).

Our first main result is an analogue of Theorem \ref{thm:RSKgrowth} for $d$-RSK growth diagrams. The proof will be given in Section \ref{sec:dRSKgrowth}.

\begin{theorem}[Oscillating $d$-RSK correspondence]\label{thm:dRSKgrowth}
    Let $F$ be a Young diagram, and let $w\in\{+,-\}^k$ be the type sequence encoding the outer boundary of $F$. Let $d\in \N$.
    \begin{enumerate}[(a)]
        \item For any $d$-RSK growth diagram on $F$, the sequence of partitions $\lambda^{(0)},\ldots, \lambda^{(k)}$ on the outer boundary of $F$ (listed in order starting from the point on the positive $x$-axis) forms a $d$-semistandard $w$-oscillating tableau. Also, the filling of the diagram avoids the pattern $d\cdots 1 (d+1)$. 
        \item Any filling of $F$ that avoids the pattern $d\cdots 1 (d+1)$ extends uniquely to a $d$-RSK growth diagram on $F$.
        \item Any $d$-semistandard $w$-oscillating tableau assigned to the lattice points along the outer boundary of $F$ can be uniquely extended to a $d$-RSK growth diagram on $F$.
    \end{enumerate}
    Consequently, there is an explicit bijective correspondence between fillings of $F$ that avoid the pattern $d\cdots 1 (d+1)$ and $d$-semistandard $w$-oscillating tableaux.
\end{theorem}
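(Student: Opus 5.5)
The plan is to reduce everything to single-cell statements, exactly as in the proof of Theorem~\ref{thm:RSKgrowth}, and then propagate them cell by cell. The core is a \emph{local bijection lemma} for the $d$-RSK rule:
\begin{itemize}
\item[] \emph{Forward.} Given $d$-partitions $\mu\succ\kappa\prec\nu$ and an entry $m$ with $m=0$ or $\kappa_d=0$, equation~\eqref{eq:dRSK} defines a $d$-partition $\rho$ with $\mu\prec\rho\succ\nu$.
\item[] \emph{Backward.} Given $d$-partitions $\mu\prec\rho\succ\nu$, there is a unique pair $(\kappa,m)$ with $m\ge 0$, $\mu\succ\kappa\prec\nu$, $m=0$ or $\kappa_d=0$, satisfying~\eqref{eq:dRSK}.
\end{itemize}
For the forward direction, rows $2,\dots,d$ are the usual RSK computation and interlace by the classical argument. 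The only new inequalities are $\rho_1\ge\rho_2$ and $\rho_d\ge \mu_d,\nu_d$ together with $\rho_1\ge \max(\mu_1,\nu_1)$. These follow from $\rho_1=m+\min(\mu_d,\nu_d)+\max(\mu_1,\nu_1)-\kappa_d$ and $\kappa_d\le\min(\mu_d,\nu_d)$.

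For the backward direction, solve~\eqref{eq:dRSK} for $\kappa_1,\dots,\kappa_{d-1}$ from rows $2,\dots,d$. The first row then determines $m-\kappa_d=\rho_1-\max(\mu_1,\nu_1)-\min(\mu_d,\nu_d)$. The constraint ``$m=0$ or $\kappa_d=0$'' forces $m=\max(0,\cdot)$ and $\kappa_d=\max(0,-\cdot)$. One then checks the interlacing $\mu\succ\kappa\prec\nu$, including $\kappa_d\le\min(\mu_d,\nu_d)$ and $\kappa_{d-1}\ge\max(\mu_d,\nu_d)$ when $\kappa_d>0$. I expect this to be a routine but slightly fiddly inequality check, mirroring the classical one.

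With the lemma in hand, (b) and (c) follow by induction on cells. For (b), build the partitions by adding cells in an order where $\kappa,\mu,\nu$ are already known, starting from the empty boundary on the axes. For (c), peel off corner cells of $F$ from the outer boundary inward, using the backward map. The outer-boundary sequence for (a) is automatically a $d$-semistandard $w$-oscillating tableau, since the local rule gives interlacing along every edge and all partitions are $d$-partitions.

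The real content is the pattern-avoidance claims. In (a), the filling of a $d$-RSK diagram must avoid $d\cdots1(d+1)$. In (b), a filling avoiding the pattern must never hit a cell with $m>0$ and $\kappa_d>0$, so the forward map is always defined. This needs a global interpretation of $\kappa_d$. The key step is to prove, for a pattern-avoiding filling, that the partition at lattice point $p$ satisfies $\lambda(p)_d>0$ iff the filling of $\Rect_p$ contains a se-chain of length $d$.

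I would prove this by induction alongside the construction, comparing with the ordinary RSK diagram. In the region where $\min(\mu_d,\nu_d)=0$ the two rules coincide, so by Greene's theorem for RSK growth diagrams, $\lambda_1'+\dots$ (column lengths) measure se-chains. Concretely, $\kappa_d>0$ iff $\Rect_\kappa$ contains $d$ disjoint se-chains forming a length-$d$ antichain. Once $\kappa_d>0$, monotonicity (Proposition~\ref{prop:dRSKadjacent}(i)) shows $d$-th parts stay positive to the northeast.

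Given this characterization, the argument closes in both directions. A nonzero entry at a cell whose lower-left corner has $\kappa_d>0$ is exactly an occurrence of $d\cdots1(d+1)$, which gives (a). Conversely, avoidance guarantees $m=0$ whenever $\kappa_d>0$, which gives (b).

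The main obstacle is establishing this characterization past the RSK region, where the cyclic rule alters the partitions. I would first try a monotonicity argument: show by induction that $\lambda(p)_d>0$ iff some lattice point weakly southwest of $p$ in the RSK region already has positive $d$-th part, reducing everything to the classical Greene-type statement.
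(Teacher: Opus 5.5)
Your plan follows the paper's proof. It uses forward and backward local growth lemmas, and your inequality checks are the ones needed. It builds the diagram cell by cell for (b) and (c). Its key fact is that $\lambda(p)_d>0$ iff $\Rect_p$ contains a se-chain of length $d$, which is the se-chain half of Theorem~\ref{thm:dRSKchains}, proved in Section~\ref{sec:dRSKchains} before it is used.

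The step you call the main obstacle is short, and your monotonicity idea does close it:
\begin{itemize}
\item A $d$-RSK cell with $\min(\mu_d,\nu_d)=0$ has $\kappa_d=0$ by interlacing, so it satisfies the RSK rule.
\item By Proposition~\ref{prop:dRSKadjacent}(i), $\lambda_d=0$ at $p$ forces $\lambda_d=0$ at every point weakly southwest of $p$.
\item Hence if $\lambda(p)_d=0$, all of $\Rect_p$ is an RSK growth diagram, and Theorem~\ref{thm:RSKchains} gives no se-chain of length $d$.
\item If $\lambda(p)_d>0$, choose $q$ weakly southwest of $p$ with $\lambda(q)_d>0$ that is minimal in the componentwise order. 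Every cell of $\Rect_q$ has its top-left and bottom-right corners weakly southwest of $q$ and different from $q$. So those corners have zero $d$-th part, and $\Rect_q$ is an RSK diagram. By Theorem~\ref{thm:RSKchains} it contains a se-chain of length $\ell(\lambda(q))=d$.
\end{itemize}
This is the same reasoning the paper packages as Lemmas~\ref{lem:dRSKchains1} and~\ref{lem:dRSKchains2}.

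Three things to fix:
\begin{enumerate}
\item State and prove the characterization for an arbitrary $d$-RSK growth diagram, not ``for a pattern-avoiding filling'' or only ``alongside the construction.'' In (a), avoidance is exactly what you are proving, so as written that step is circular. The argument above never uses avoidance. In (b) you apply it to the already-completed diagram on $\Rect_p$, so nothing else changes.
\item Greene's theorem is not needed; Theorem~\ref{thm:RSKchains} suffices. The relevant equivalence is $\lambda_d>0 \iff \ell(\lambda)\ge d \iff$ there is a single se-chain of length $d$. Your phrase ``$d$ disjoint se-chains forming a length-$d$ antichain'' is not a correct reformulation.
\item For existence in (c), you should also check that the backward construction puts $\emptyset$ on the axes. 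This follows from interlacing, starting from $\lambda^{(0)}=\lambda^{(k)}=\emptyset$.
\end{enumerate}
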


Part (a) of this theorem differs from the RSK case in an essential way. The partitions on the outer boundary must now form a $d$-semistandard $w$-oscillating tableau (this is immediate from the local rule), and there is now a constraint on the \emph{filling} of the diagram. Part (b) shows that this filling constraint is exactly the ``correct'' one. Indeed, (a) and (b) together show that $d$-RSK growth diagrams on $F$ are in bijection with fillings of $F$ that avoid the pattern $d\cdots 1 (d+1)$. Similarly, (a) and (c) imply that $d$-RSK growth diagrams on $F$ are in bijection with $d$-semistandard $w$-oscillating tableaux. 

We will call the bijection between fillings and tableaux given by Theorem \ref{thm:dRSKgrowth} the \emph{oscillating $d$-RSK correspondence}. As with RSK, we treat this as a unified bijection over all choices of $F$. Restricting to rectangular Young diagrams gives a non-oscillating version of the $d$-RSK correspondence, which is a bijection between fillings of rectangular Young diagrams that avoid the pattern $d\cdots 1 (d+1)$ and pairs of $d$-semistandard Young tableaux with the same shape. We refer to this as the \emph{$d$-RSK correspondence}.

The growth diagram shown in Figure \ref{fig:dRSKexample} illustrates an example of the (oscillating) $d$-RSK correspondence for $d=3$. The filling of the diagram avoids the pattern $3214$. The sequence of partitions along the outer boundary forms a $3$-semistandard $w$-oscillating tableau where $w=+^7-^7$. Alternatively, by reading off the partitions along the right and top sides of the diagram separately, this is a pair of $3$-semistandard Young tableaux of shape $(9,9,5)$.

\subsection{Properties of the oscillating RSK and $d$-RSK correspondences}
From the growth diagram constructions and Proposition \ref{prop:dRSKadjacent}, we can deduce some properties that the oscillating RSK and $d$-RSK correspondences have in common.

\begin{proposition} \label{prop:weightreflection}
Let $F$ be a Young diagram whose outer boundary is encoded by the type sequence $w$. Let $T$ be a $d$-semistandard $w$-oscillating tableau. Under the $d$-RSK correspondence there is a filling of $F$ that corresponds to $T$. The following properties hold.
\begin{enumerate}
    \item The sum of the entries in the $i$th row of the filling equals $\wt^+(T)_i$. The sum of the entries in the $j$th column of the filling equals $\wt^-(T)_j$.
    \item Reversing the sequence $T$ corresponds to reflecting the filling across the line $y=x$.
\end{enumerate}
These properties also hold for the oscillating RSK correspondence.
\end{proposition}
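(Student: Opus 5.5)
The plan is to read both properties directly off the $d$-RSK growth diagram $G$ that Theorem~\ref{thm:dRSKgrowth} attaches to $T$, using Proposition~\ref{prop:dRSKadjacent}. By Theorem~\ref{thm:dRSKgrowth}(c), $T$ placed on the outer boundary of $F$ extends uniquely to $G$. The filling corresponding to $T$ is the filling of $G$, and by part (b) of that theorem it is the unique filling whose growth diagram has $T$ on the boundary. The same setup works verbatim for RSK growth diagrams via Theorem~\ref{thm:RSKgrowth}, so the RSK case needs no separate argument.

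For property (1), fix the $i$th row of $F$. The outer boundary, read from the positive $x$-axis, moves upward and leftward. Its $i$th up-step (`$+$') is the vertical edge at the right end of row $i$, so it is the $a_i$th step, where $a_i$ is the index of the $i$th `$+$'. The endpoints of this edge carry $\lambda^{(a_i-1)}$ below and $\lambda^{(a_i)}$ above. By Proposition~\ref{prop:dRSKadjacent}(ii), $|\lambda^{(a_i)}|-|\lambda^{(a_i-1)}|$ equals the sum of the entries in the row to the left of this edge, which is row $i$. This is exactly $\wt^+(T)_i$.

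For columns, the left-steps are traversed from right to left. So the $j$th column counted from the left corresponds to the $j$th-to-last `$-$', at index $b_j$. This edge runs from $\lambda^{(b_j-1)}$ on the right to $\lambda^{(b_j)}$ on the left. Applying Proposition~\ref{prop:dRSKadjacent}(ii) to this horizontal edge gives $|\lambda^{(b_j-1)}|-|\lambda^{(b_j)}|$ as the column sum of column $j$, which is $\wt^-(T)_j$. The only thing requiring care in (1) is this index bookkeeping, in particular why $\wt^-$ uses the reversed ordering.

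For property (2), reflect $G$ across $y=x$. By Proposition~\ref{prop:dRSKadjacent}(iii), the result is a $d$-RSK growth diagram on $F'$ whose filling is the reflected filling. The outer boundary of $F'$, read from the positive $x$-axis, is the reflection of the boundary of $F$ read from the $y$-axis end. Each up-step becomes a left-step and conversely, so the type sequence becomes $w$ reversed with signs swapped. The boundary partitions therefore form the reversed sequence of $T$, viewed as an oscillating tableau of the reversed, sign-swapped type. This is the sequence read off the reflected diagram, so by uniqueness in Theorem~\ref{thm:dRSKgrowth}(b),(c), the reversed tableau corresponds to the reflected filling. As a consistency check, $\wt^+$ and $\wt^-$ swap under this reflection, which matches rows and columns swapping.
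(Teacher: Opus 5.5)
Your proposal is correct and takes the same approach as the paper, which derives property (1) from Proposition~\ref{prop:dRSKadjacent}(ii) and property (2) from Proposition~\ref{prop:dRSKadjacent}(iii). You add the details the paper leaves implicit: matching the $i$th `$+$' and the $j$th-to-last `$-$' to row $i$ and column $j$ (which explains the ordering in $\wt^-$), and checking that reflecting the boundary reverses $w$ with its signs swapped.
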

\begin{proof}
    The first statement follows from Proposition \ref{prop:dRSKadjacent}(ii). The second statement follows from Proposition \ref{prop:dRSKadjacent}(iii).
\end{proof}

The reflection property stated in this proposition merits some further discussion. The reversal of $T$ is to be interpreted as a $d$-semistandard $w'$-oscillating tableau where $w'$ is the type sequence obtained from $w$ by reversing it and switching `$+$'s and `$-$'s. The reflection of a filling of $F$ across $y=x$ is to be interpreted as a filling of the reflected Young diagram $F'$ (whose outer boundary is encoded by $w'$).

To refer to the fact that these two reflection operations correspond to each other under the oscillating RSK or $d$-RSK correspondence, we say that the correspondences \emph{commute with reflection}. Properly interpreting this statement requires viewing the correspondences as unified bijections over all choices of Young diagrams.

Alternatively, this commutativity property is also meaningful at the level of a single Young diagram $F$ if $F$ happens to be symmetric about $y=x$. In this special case, this property has an interesting consequence: symmetric fillings of $F$ correspond to ``palindromic'' tableaux (i.e. those that are invariant under reversal).

The first property stated in Proposition \ref{prop:weightreflection} is more straightforward. We will refer to this as the \emph{weight-preserving} property of RSK and $d$-RSK. Although we did not specifically define the notion of weight for fillings, it should be clear why we have chosen this name.

Both properties in Proposition \ref{prop:weightreflection} become somewhat simpler to state for the non-oscillating RSK and $d$-RSK correspondences. In that case, the filling of $F$ corresponds to a pair $(P,Q)$ of semistandard Young tableaux. The weight-preserving property says that the row and column sums of the filling give $\wt(P)$ and $\wt(Q)$. The fact that RSK and $d$-RSK commute with reflection says that reflecting the filling across $y=x$ (i.e. transposing the matrix) corresponds to interchanging $P$ and $Q$.

\subsection{Chains in growth diagrams}

We now turn to our other main result about $d$-RSK. First, we state the analogous result for RSK, which is well known.

\begin{theorem}[see {\cite[Thm. 8]{krattenthaler2006}}] \label{thm:RSKchains}
Fix an RSK growth diagram on a Young diagram $F$. Let $p$ be a lattice point of $F$, and let $\lambda$ be the partition assigned to $p$ in the growth diagram. The length of the longest se-chain in $\Rect_p$ is equal to $\ell(\lambda)$. The length of the longest NE-chain in $\Rect_p$ is equal to $\lambda_1$. 
\end{theorem}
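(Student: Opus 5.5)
The natural route is induction over the lattice points of $\Rect_p$. Because the local rule is the same at every cell, the growth diagram restricted to $\Rect_p$ is itself an RSK growth diagram, so we may assume $F=\Rect_p$ and $p$ is its upper-right corner. I would prove the NE-chain statement first, in a stronger form. For every lattice point $q$ with partition $\lambda(q)$, and every $k$, the sum $\lambda(q)_1+\cdots+\lambda(q)_k$ should equal the maximum total length of a union of $k$ NE-chains in $\Rect_q$. This is Greene's theorem in growth-diagram form. I would prove it by induction on the cell whose upper-right corner is $q$. With the notation of Figure~\ref{fig:single_cell}, summing the first $k$ rows of \eqref{eq:RSK} gives
\[
\rho_1+\cdots+\rho_k \;=\; m+\max(\mu_1,\nu_1)+\sum_{i=1}^{k-1}\bigl(\min(\mu_i,\nu_i)+\max(\mu_{i+1},\nu_{i+1})\bigr)-(\kappa_1+\cdots+\kappa_{k-1}).
\]
The combinatorial side has to be matched to this expression, which is an inclusion--exclusion for optimal families of chains in $\Rect_\rho = \Rect_\mu\cup\Rect_\nu\cup\{\text{cell}\}$.

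For $k=1$, which is the statement we actually need, the argument is easy and I would do it directly. A longest NE-chain in $\Rect_\rho$ either avoids the new top-right cell, and then lies in $\Rect_\mu$ or in $\Rect_\nu$ (a NE-chain ending to the left of the last column lies in $\Rect_\mu$, else all its cells are in the last column, hence in $\Rect_\nu$ or the new cell). Or it uses the new cell with entry $m$, which it must use last, and then it is $m$ plus a chain in $\Rect_\mu\cup\Rect_\nu$. That gives a length of $\max(\mu_1,\nu_1)+m$. But the first row of \eqref{eq:RSK} reads $\rho_1=m+\max(\mu_1,\nu_1)$, which matches, since $\kappa$ does not appear in the first row. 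So $\lambda_1$ equals the longest NE-chain by induction, with base case the empty partitions on the axes.

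For the se-chain statement I would use a dual argument on columns, i.e.\ on conjugate partitions, proving $\ell(\lambda)$ equals the longest se-chain. Write $s(q)$ for the longest se-chain length in $\Rect_q$. Each new se-chain created by the top-right cell is only a single entry, since nothing lies up-right of it in the order. The recursion I expect to show is:
\begin{itemize}
\item[(1)] if $\mu\ne\nu$, or if $m=0$ and $\kappa\ne\mu$, then $\ell(\rho)=\max(\ell(\mu),\ell(\nu))$;
\item[(2)] if $\mu=\nu=\kappa$ and $m>0$, then $\ell(\rho)=\max(\ell(\mu),1)$;
\item[(3)] if $\mu=\nu\ne\kappa$, then possibly $\ell(\rho)=\ell(\mu)+1$, namely exactly when $\ell(\mu)=\ell(\nu)>\ell(\kappa)$.
\end{itemize}
All of these can be read off from \eqref{eq:RSK} by looking at the last nonzero row. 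Combinatorially, a longest se-chain in $\Rect_\rho$ grows by one exactly when there are longest chains in $\Rect_\mu$ and $\Rect_\nu$ neither of which fits in $\Rect_\kappa$. One can then combine the chain ending in the top row (from $\Rect_\mu$) with the start of one in the right column (from $\Rect_\nu$); this is the main obstacle. Pure length information is not enough, so I would strengthen the induction hypothesis to the full Greene statement for se-chains: $\lambda'_1+\cdots+\lambda'_k$ is the maximum size of a union of $k$ se-chains, counting entry multiplicities appropriately. Alternatively, I would cite Greene's theorem together with the standard identification of growth diagrams with RSK from \cite{krattenthaler2006}. If a self-contained proof is preferred, I would first treat the case of $0$--$1$ fillings with at most one nonzero entry per row and column, which is Fomin's standard setting, where the row-length recursion is straightforward. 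I would then reduce general fillings to that case by standardization, replacing an entry $m$ by an $m\times m$ identity-free antidiagonal block so that se-chains and NE-chains behave correctly, and check that standardization commutes with the local rule.
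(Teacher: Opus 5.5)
\textbf{NE-chain half.} This part of your proposal is correct and self-contained. The first row of \eqref{eq:RSK} is $\rho_1=m+\max(\mu_1,\nu_1)$, and your decomposition of a longest NE-chain in $\Rect_\rho$ matches it. There is one slip: a chain whose last entry is in the last column but below the new cell need not lie in that column. It does lie in $\Rect_\nu$, because rows weakly increase along a NE-chain.

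\textbf{Relation to the paper.} The paper gives no argument of its own for this theorem; it simply cites \cite[Thm.~8]{krattenthaler2006}. So your fallback of citing Greene's theorem together with the growth-diagram/RSK identification is exactly the paper's route.

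\textbf{The gap in the se-chain half.} Your self-contained se-chain argument has a genuine gap: the recursion (1)--(3) is false and cannot be read off from \eqref{eq:RSK}. Write cells as (column, row), and take the permutation filling with $1$'s in cells $(1,1)$, $(2,3)$, $(3,2)$.
\begin{itemize}
\item At the cell with top-right corner $(3,3)$ one has $\kappa=(1)$, $\mu=\nu=(2)$ and $m=0$.
\item Then \eqref{eq:RSK} gives $\rho=(2,1)$, matching the se-chain formed by cells $(2,3),(3,2)$.
\item Since $m=0$, $\kappa\neq\mu$ and $\ell(\kappa)=\ell(\mu)=\ell(\nu)$, both (1) and (3) predict $\ell(\rho)=1$.
\end{itemize}
In the Knuth setting, even $\mu\neq\nu$ does not prevent growth. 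Entries $1,1,2,0$ in cells $(1,1),(1,2),(2,1),(2,2)$ give $\kappa=(1)$, $\mu=(2)$, $\nu=(3)$ and $\rho=(3,1)$.

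\textbf{What the local rule actually gives.} Let $M=\max(\ell(\mu),\ell(\nu))\geq 1$. Then \eqref{eq:RSK} yields $\ell(\rho)\in\{M,M+1\}$, with $\ell(\rho)=M+1$ if and only if $\ell(\mu)=\ell(\nu)=M$ and $\min(\mu_M,\nu_M)>\kappa_M$. (For $M=0$, $\ell(\rho)=1$ if and only if $m>0$.) This depends on the last parts $\mu_M,\nu_M,\kappa_M$, not only on the lengths. The first example above is a $0$--$1$ filling, so standardizing does not lead to a ``straightforward'' row-length recursion either.

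\textbf{What is missing.} To close the gap by induction, you need an invariant that interprets the parts themselves in terms of chains. That invariant is the dual Greene statement: $\lambda'_1+\cdots+\lambda'_k$ equals the maximum, over families of $k$ se-chains, of $\sum_c \min(m_c,\#\{\text{chains through } c\})$, where $m_c$ is the entry of cell $c$. Verifying this cell by cell against \eqref{eq:RSK} is the nontrivial part of Greene's theorem, and your proposal names it without supplying it.

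\textbf{If you standardize.} Three further points need care.
\begin{itemize}
\item The $m$ unit entries replacing an entry $m$ must run from bottom-left to top-right in the paper's coordinates. Then they form a NE-chain and meet any se-chain at most once.
\item Rows and columns must be split globally according to the row and column sums, not cell by cell.
\item Compatibility of the RSK rule with the composite of RS rules over a block is a real lemma, not a routine check.
\end{itemize}
As it stands, only the citation fallback proves the se-chain half.
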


This theorem is a growth diagram analogue of a classical theorem of Schensted \cite[Thm.~1]{schensted1961}. It has a generalization due to Greene \cite{greene1974} (see \cite[Thm. 8]{krattenthaler2006}) that gives a complete description of the partitions in the growth diagram in terms of chains, but we will not need this generalization.

The following theorem is an analogue of Theorem \ref{thm:RSKchains} for $d$-RSK growth diagrams. The proof will be given in Section \ref{sec:dRSKchains}.

\begin{theorem}\label{thm:dRSKchains}
    Fix a $d$-RSK growth diagram on a Young diagram $F$. Let $p$ be a lattice point of $F$, and let $\lambda$ be the partition assigned to $p$ in the growth diagram. Then $\ell(\lambda) = \min(d, K)$ where $K$ is the length of the longest se-chain contained in $\Rect_p$.
    
    Let $T$ be the $d$-semistandard $w$-oscillating tableau obtained from the partitions on the outer boundary of $F$. The length of the longest NE-chain in $F$ equals the minimum cylindric width of $T$.
\end{theorem}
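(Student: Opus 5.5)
Both parts will be proved by comparison with the classical RSK growth diagram. The link is the observation already made in the paper: the $d$-RSK local rule reduces to the RSK local rule whenever $\min(\mu_d,\nu_d)=0$.

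\textbf{First statement.} We induct on $p$, ordered so that the three other corners of the cell with upper-right corner $p$ are handled first. Write $K(p)$ for the longest se-chain length in $\Rect_p$. We split into two cases.

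\emph{Case $\min(\mu_d,\nu_d)=0$.} The rule is the RSK rule. The RSK local rule alone gives $\ell(\rho)=\max(\ell(\mu),\ell(\nu))$, or $\ell(\kappa)+1$ when $m>0$ and $\ell(\mu)=\ell(\nu)=\ell(\kappa)$. The se-chain lengths satisfy the same recurrence as in the proof of Theorem~\ref{thm:RSKchains}: $K(p)=\max(K(\text{left}),K(\text{below}))$, or $K(\text{corner})+1$ when $m>0$ and all three agree. Since $\min(d,\cdot)$ commutes with these operations, it suffices to check the one boundary case. That case is $m>0$ with $\ell(\kappa)=d$, which the rule forbids because it requires $\kappa_d=0$.

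\emph{Case $\mu_d,\nu_d>0$.} Then $\rho\supseteq\mu$ gives $\ell(\rho)=d$. Also $K(p)\ge K(\text{left})\ge d$ by induction.

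So the length statement follows from a single-cell lemma comparing the recurrence for $\ell$ with the recurrence for the longest se-chain.

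\textbf{Second statement.} The plan is to proceed in four steps.

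\begin{enumerate}
\item Show by induction over cells that for every lattice point $p$, $\mathrm{MCW}$ of the boundary tableau of the subdiagram $\Rect_p\cap F$ equals the longest NE-chain in that region. Equivalently, track the quantity $\max$ over boundary edges of $\lambda_1-\lambda'_d$.
\item Establish a single-cell identity. From the first row of \eqref{eq:dRSK},
\[\rho_1-\kappa_d=m+\max(\mu_1,\nu_1)-\kappa_d+\min(\mu_d,\nu_d)-\kappa_d.\]
Combined with the last row, this shows that the new boundary widths $\rho_1-\mu_d$ and $\rho_1-\nu_d$ are controlled by the old widths $\mu_1-\kappa_d$, $\nu_1-\kappa_d$ and by $m$. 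Adding a cell replaces the boundary edges $\kappa\mu$ and $\kappa\nu$ by $\mu\rho$ and $\nu\rho$.
\item Show that the new widths equal $m$ plus the maximum of the old ones, or the maximum of the old ones, matching the NE-chain recurrence $N(p)=m+\max(N(\text{left}),N(\text{below}))$. Here $N(p)$ is the longest NE-chain ending weakly below-left of $p$.
\item Recognize that NE-chains are unaffected by the cyclic shift. This is why MCW, which measures $\lambda_1$ minus the $d$-th part of the previous shape, replaces the plain $\lambda_1$ of Theorem~\ref{thm:RSKchains}.
\end{enumerate}

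\textbf{Main obstacle.} The hard part is the second statement. The MCW is a global maximum over all boundary edges, not a value at a single point. The natural per-point invariant, $\lambda_1$ at $p$ as in the RSK case, fails because the cyclic term $\min(\mu_d,\nu_d)$ inflates $\rho_1$.

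I would first guess the per-edge invariant: for each edge $\alpha\to\beta$ with $\alpha\prec\beta$, the quantity $\beta_1-\alpha_d$ should equal the longest NE-chain within $\Rect_\beta$ passing through the column or row of that edge. The induction would then verify this edge-by-edge, using both (i) and (ii) of Proposition~\ref{prop:dRSKadjacent}. If this exact invariant fails, I would fall back to proving two inequalities separately:
\begin{itemize}
\item an upper bound, with each boundary width at most the longest NE-chain, by induction on cells;
\item a lower bound, by tracing back from the edge achieving the maximum and building a chain cell by cell.
\end{itemize}
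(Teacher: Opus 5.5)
\textbf{First statement: Case~1 uses recurrences that are false.} Under the RSK rule, $\ell(\rho)$ is not $\max(\ell(\mu),\ell(\nu))$ (resp.\ $\ell(\kappa)+1$). Nor is $K(p)$ a function of the three neighbouring values of $K$ and $m$.
\begin{itemize}
\item Put entries $1$ in the top-left and bottom-right cells of a $2\times 2$ square. At the top-right cell, $\kappa=\emptyset$, $\mu=\nu=(1)$ and $m=0$, so \eqref{eq:RSK} gives $\rho=(1,1)$, while $K$ jumps from $1$ at both neighbours to $2$.
\item In your other branch, $\kappa=\mu=\nu=(1)$ with $m=1$ gives $\rho=(2)$. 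Indeed, a nonzero entry in the top-right cell of $\Rect_p$ can only lie in a se-chain of length $1$.
\end{itemize}
What you wrote is essentially the recurrence for $\rho_1$ and NE-chains, transposed. Lengths and se-chains are not local in this way, so the remark that $\min(d,\cdot)$ commutes with these operations proves nothing.

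The repair is to apply Theorem~\ref{thm:RSKchains} to the whole rectangle rather than cell by cell. Write $p=(x,y)$ and suppose $\mu_d=0$, with $\mu$ at $(x-1,y)$. Every cell of $\Rect_p$ has its top-left corner weakly below-left of $(x-1,y)$, so that corner carries a partition with zero $d$-th part. Hence every cell of $\Rect_p$ obeys the RSK rule, and Theorem~\ref{thm:RSKchains} gives $\ell(\rho)=K(p)$ directly. If instead $\nu_d=0$, argue symmetrically via bottom-right corners. This is the paper's Lemma~\ref{lem:dRSKchains1}. With this fix, your Case~2 and your induction on $p$ are correct, and they replace the paper's Lemma~\ref{lem:dRSKchains2}.

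\textbf{Second statement: you have a plan but not the mechanism.} The only single-cell input needed is the first row of \eqref{eq:dRSK}, in the form $\rho_1-\min(\mu_d,\nu_d)=m+\max(\mu_1-\kappa_d,\nu_1-\kappa_d)$. That is, the larger new edge width is $m$ plus the larger old one; the last row plays no role. This identity holds in every cell, so by itself it cannot see where NE-chains are.

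Your fallbacks do not supply the missing step either.
\begin{itemize}
\item Your per-edge invariant fails read literally. Take $d\ge 2$ and the Young diagram of $(1,1)$, with entry $1$ in the lower cell and $0$ in the upper one. The right edge of the upper cell has $\beta_1-\alpha_d=1$, but no NE-chain meets that row.
\item A rectangle-by-rectangle recurrence does not match $\MCW_d$. The rectangle $\Rect_p$ and the two rectangles in your recurrence for $N(p)$ have different boundary paths.
\end{itemize}

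The missing idea is to induct on $|F|$ by deleting a corner cell $C$ with bottom-left corner $(x,y)$. Then the boundary path changes only at the two edges around $C$, and
$\LNE(F)=\max\bigl(\LNE(\widetilde F),\,m+\LNE(\Rect_{(x+1,y)}),\,m+\LNE(\Rect_{(x,y+1)})\bigr)$.
\begin{itemize}
\item If $m=0$, $\LNE$ is trivially unchanged, and $\MCW_d$ is unchanged by the identity.
\item If $m>0$, the local rule forces $\kappa_d=0$. By Lemma~\ref{lem:dRSKchains1}, both rectangles are then genuine RSK growth diagrams. Theorem~\ref{thm:RSKchains} gives their longest NE-chains as $\nu_1=\nu_1-\kappa_d$ and $\mu_1=\mu_1-\kappa_d$, which are exactly the old widths in the identity.
\end{itemize}
This is how the inflation of $\rho_1$ by $\min(\mu_d,\nu_d)$ is neutralized: it only occurs in cells where $m$ is forced to be $0$. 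Your Step~4 gestures at this but does not supply it.
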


The statement about se-chains in this theorem is consistent with our previous discussion of how the $d$-RSK local rule reduces to the RSK local rule in certain cases. The statement about NE-chains is more interesting since it reveals a connection to cylindric tableaux. 

As an example, let us consider Figure \ref{fig:dRSKexample}. The length of the longest NE-chain in this diagram is $1+1+3+1+1=7$. The minimum cylindric width of the tableau on the outer boundary is achieved by the pair of adjacent partitions $(9,4,4)$ and $(8,4,2)$ on the right side. That is, $\MCW_3((8,4,2),(9,4,4))=9-2=7$, and this is the largest value of $\MCW_3$ among all pairs of adjacent partitions on the outer boundary. As a consequence, the tableau on the outer boundary is $(3,L)$-cylindric if and only if $L\geq 7$.

In the case of rectangular Young diagrams, there is an interesting contrast between the RSK and $d$-RSK theorems. In the case of RSK, the length of the longest NE-chain can be determined by looking at just the partition in top-right corner of the rectangle. On the other hand, for $d$-RSK the minimum cylindric width statistic may depend on other partitions along the outer boundary. The example in Figure \ref{fig:dRSKexample} illustrates this point.

It should be noted that Theorem \ref{thm:dRSKchains} can be restated to give information about the longest NE-chain in any sub-Young diagram $G \subseteq F$. Indeed, we can always restrict a growth diagram to a smaller growth diagram, so the length of the longest NE-chain in $G$ is equal to the minimum cylindric width of the sequence of partitions along the outer boundary of $G$. 

By the same token, Theorem \ref{thm:RSKchains} can also be restated to give information about NE-chains in any sub-Young diagram $G \subseteq F$, not just rectangular ones. In that case the length of the longest NE-chain in $G$ is equal to $\max(\lambda_1)$ where $\lambda$ ranges over all partitions along the outer boundary of $G$. Hence, the two theorems essentially hold at the same level of generality.

\section{Cylindric correspondences} \label{sec:corollaries}

Our main results on $d$-RSK immediately imply the following correspondence involving cylindric tableaux.

\begin{corollary}[Oscillating cylindric RSK correspondence]
    Let $(d,L)$ be a pair of positive integers. Let $F$ be a Young diagram and let $w$ denote the type sequence encoding the outer boundary of $F$.  The oscillating $d$-RSK correspondence restricts to a bijection between
    \begin{enumerate}
        \item fillings of $F$ that avoid the pattern $d\cdots 1 (d+1)$ and do not contain a NE-chain of length greater than $L$, and
        \item $(d,L)$-cylindric semistandard $w$-oscillating tableaux.
    \end{enumerate}
\end{corollary}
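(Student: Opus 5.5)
The plan is to restrict the bijection of Theorem~\ref{thm:dRSKgrowth} and identify the image of the restricted domain using Theorem~\ref{thm:dRSKchains}. Fix $F$ and $w$. By Theorem~\ref{thm:dRSKgrowth}, the oscillating $d$-RSK correspondence is a bijection $\Phi$ from fillings of $F$ avoiding $d\cdots 1(d+1)$ onto $d$-semistandard $w$-oscillating tableaux. Since $\Phi$ is a bijection, its restriction to any subset $A$ of the domain is a bijection onto $\Phi(A)$. So it suffices to show, for every filling $M$ in the domain, that
\[
M \text{ has no NE-chain of length greater than } L \iff \Phi(M) \text{ is } (d,L)\text{-cylindric}.
\]

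To prove this equivalence, let $G$ be the $d$-RSK growth diagram on $F$ extending $M$, which exists and is unique by Theorem~\ref{thm:dRSKgrowth}(b). By construction, $\Phi(M)=T$ is the sequence of partitions on the outer boundary of $G$. Theorem~\ref{thm:dRSKchains} then gives that the length of the longest NE-chain in $F$ equals $\MCW_d(T)$.

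By definition, $\MCW_d(T)$ is the smallest $L'$ for which $T$ is $(d,L')$-cylindric. The condition $\alpha\prec_{(d,L)}\beta$ is monotone in $L$: it is $\alpha\prec\beta$ together with $\beta_1-\alpha_d\le L$. Hence $T$ is $(d,L)$-cylindric if and only if $\MCW_d(T)\le L$. This holds if and only if every NE-chain in $M$ has length at most $L$, which proves the equivalence. The image of the restricted domain is therefore exactly the set of $(d,L)$-cylindric semistandard $w$-oscillating tableaux. Any such tableau is automatically $d$-semistandard, since $(d,L)$-interlacing is defined only for $d$-partitions, so every one of them lies in the codomain of $\Phi$.

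There is no real obstacle here, because the work is done in Theorems~\ref{thm:dRSKgrowth} and~\ref{thm:dRSKchains}. The only points needing care are two edge cases:
\begin{enumerate}
    \item If $F$ is empty or the filling is zero, the longest NE-chain has length $0$. Correspondingly the tableau is constant $\emptyset$, whose minimum cylindric width is $0\le L$.
    \item Because of the monotonicity in $L$ noted above, the definition of MCW as a minimum really does characterize the full set of admissible $L$.
\end{enumerate}
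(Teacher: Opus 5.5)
Your proposal is correct and is the paper's proof: restrict the oscillating $d$-RSK bijection and use the NE-chain statement of Theorem~\ref{thm:dRSKchains} (longest NE-chain $=\MCW_d(T)$) to identify the image as the $(d,L)$-cylindric tableaux. You additionally spell out the monotonicity in $L$ that gives ``$(d,L)$-cylindric $\iff \MCW_d(T)\le L$,'' which the paper leaves implicit.
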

\begin{proof}
    This correspondence is obtained by restricting the oscillating $d$-RSK correspondence. By Theorem \ref{thm:dRSKchains}, restricting to fillings that do not contain a NE-chain of length greater than $L$ corresponds to restricting to tableaux that are $(d,L)$-cylindric.
\end{proof}

We call the above correspondence the \emph{oscillating $(d,L)$-cylindric RSK correspondence}. As usual we think of this as a unified bijection over all choices of $F$. This correspondence inherits the properties of the oscillating $d$-RSK correspondence stated in Proposition \ref{prop:weightreflection} (that is, it is weight-preserving and commutes with reflection). 

When talking about this cylindric correspondence, we will omit the ``$(d,L)$-'' from the name since the relevant parameters will always be clear from context. The same convention will apply to the non-oscillating version, which we state now.

\begin{corollary}[Cylindric RSK correspondence] 
    Let $(d,L)$ be a pair of positive integers. The $d$-RSK correspondence restricts to a bijection between
    \begin{enumerate}
        \item fillings of rectangular Young diagrams that avoid the pattern $d\cdots 1 (d+1)$ and do not contain a NE-chain of length greater than $L$, and
        \item pairs $(P,Q)$ where $P=(\lambda^{(0)}, \lambda^{(1)}, \ldots, \lambda^{(n)})$ and $Q=(\mu^{(0)}, \mu^{(1)}, \ldots, \mu^{(m)})$ are $(d,L)$-cylindric semistandard Young tableaux with a common shape $\lambda^{(n)} = \mu^{(m)}$.
    \end{enumerate}
    Under this bijection, $n$ and $m$ equal the number of rows and columns of the rectangular Young diagram, respectively. The sum of entries in the $i$th row of the filling equals $\wt(P)_i$, and the sum of entries in the $j$th column of the filling equals $\wt(Q)_j$. Reflecting the filling across the line $y=x$ corresponds to swapping $P$ and $Q$ under this bijection.
\end{corollary}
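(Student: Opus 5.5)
The plan is to obtain the statement by restricting the oscillating cylindric RSK correspondence (the preceding corollary) to rectangular Young diagrams, mirroring the way the classical RSK correspondence is recovered from Theorem~\ref{thm:RSKgrowth}.

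Let $F$ be the rectangle with $n$ rows and $m$ columns. Its outer boundary, read from the positive $x$-axis, goes up $n$ steps and then left $m$ steps, so its type sequence is $w=+^n-^m$. A $(d,L)$-cylindric semistandard $w$-oscillating tableau is then a sequence
\[
\emptyset=\lambda^{(0)}\prec_{(d,L)}\cdots\prec_{(d,L)}\lambda^{(n)}\succ_{(d,L)}\cdots\succ_{(d,L)}\lambda^{(n+m)}=\emptyset .
\]
I will split it at the corner $\lambda^{(n)}$. The first $n+1$ partitions give $P$. Reversing the last $m+1$ partitions, so that $\mu^{(j)}=\lambda^{(n+m-j)}$, gives $Q$. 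Each $(d,L)$-interlacing relation is stated symmetrically as $\alpha\prec_{(d,L)}\beta$ or $\beta\succ_{(d,L)}\alpha$, so both $P$ and $Q$ are $(d,L)$-cylindric semistandard Young tableaux with common shape $\lambda^{(n)}=\mu^{(m)}$. This splitting is clearly invertible. Hence for fixed $(n,m)$, the oscillating tableaux of type $+^n-^m$ correspond bijectively to such pairs $(P,Q)$ of lengths $n$ and $m$.

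Composing this identification with the oscillating cylindric correspondence on $F$ gives the bijection in (1)--(2). Taking the union over all $(n,m)$, the numbers of rows and columns of the rectangle are recovered as the lengths of $P$ and $Q$. For the weight statements, I will use that, as in Section~\ref{sec:oscillating}, $\wt^+(T)=\wt(P)$ and $\wt^-(T)=\wt(Q)$. The reversed ordering in the definition of $\wt^-$ is exactly what makes the latter match. Proposition~\ref{prop:weightreflection}(1) then says that the row sums equal $\wt(P)_i$ and the column sums equal $\wt(Q)_j$.

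For reflection, Proposition~\ref{prop:weightreflection}(2) says that reflecting the filling corresponds to reversing $T$. The reversal is a tableau of type $+^m-^n$, and splitting it produces $(Q,P)$. The one point to check is that reflection preserves both defining conditions on the filling, so that the restricted correspondence is carried to itself. Reflecting across $y=x$ maps NE-chains to NE-chains of the same length, since it preserves the componentwise order. It maps se-chains to se-chains read in the reverse order, so a $d\cdots1(d+1)$ pattern becomes an element strictly below and to the left of a se-chain of length $d$. I expect this pattern-avoidance check to be the only non-mechanical step, but it needs no separate argument here. Proposition~\ref{prop:weightreflection} already holds at the level of $d$-RSK growth diagrams, and Theorem~\ref{thm:dRSKgrowth}(a) applied to the reflected growth diagram shows that the reflected filling avoids the pattern.
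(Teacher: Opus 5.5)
Your proposal is correct and is essentially the paper's proof. You restrict the oscillating cylindric RSK correspondence to the rectangle with $w=+^n-^m$, read $P$ and $Q$ off the right and top sides, and inherit the weight and reflection statements from Proposition~\ref{prop:weightreflection}.

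One slip does not affect the argument but should be fixed. Reflection across $y=x$ preserves ``strictly above and strictly to the right,'' so an occurrence of $d\cdots 1(d+1)$ maps to another occurrence of the same pattern (the pattern is an involution). It does not map to an element below and to the left of a se-chain, as you wrote. Your actual justification, via Proposition~\ref{prop:dRSKadjacent}(iii) and Theorem~\ref{thm:dRSKgrowth}(a), is valid regardless.
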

\begin{proof}
    This is obtained by restricting the oscillating cylindric RSK correspondence to rectangular Young diagrams. In terms of $d$-RSK growth diagrams, the $P$ and $Q$ tableaux are obtained by reading off the partitions along the right and top sides of the rectangular Young diagram, respectively.
\end{proof}

The cylindric RSK correspondence immediately implies the cylindric RS correspondence stated in the introduction (i.e. Theorem~\ref{thm:cRS}). Indeed, the cylindric RS correspondence is the special case where the row and column sums are all 1. In that case the fillings can be identified with permutation matrices and hence permutations.

The cylindric RS correspondence yields the following enumerative consequence.

\begin{corollary}
    Let $d,L,n$ be positive integers. The number of $(d,L)$-cylindric standard Young tableaux of size $n$ is equal to the number of involutions in $S_n$ that avoid the patterns $d \cdots 1 (d+1)$ and $1 \cdots (L+1)$.
\end{corollary}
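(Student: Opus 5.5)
The plan is to restrict the cylindric RS correspondence (Theorem~\ref{thm:cRS}, obtained from the cylindric RSK correspondence) to involutions. That correspondence is a bijection between permutations $\pi\in S_n$ avoiding $d\cdots 1(d+1)$ and $1\cdots(L+1)$, and pairs $(P,Q)$ of $(d,L)$-cylindric standard Young tableaux of size $n$ with a common shape. Taking the inverse of $\pi$ corresponds to swapping $P$ and $Q$. I will use this symmetry to identify the involutions on one side with the diagonal pairs on the other.

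First, I will make the symmetry precise. The permutation matrix of $\pi^{-1}$ is the transpose of that of $\pi$. In the growth diagram picture, transposing a matrix corresponds to reflecting the filling of the square Young diagram across $y=x$. By the cylindric RSK corollary (via Proposition~\ref{prop:weightreflection}(2)), that reflection corresponds to swapping $P$ and $Q$. So if $\pi\mapsto(P,Q)$, then $\pi^{-1}\mapsto(Q,P)$.

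Next, I use that the correspondence is a bijection. Then $\pi=\pi^{-1}$ holds if and only if $(P,Q)=(Q,P)$, that is, $P=Q$. One should also check that the inverse of a permutation avoiding both patterns again avoids both patterns. This follows because the filling-side condition (avoiding $d\cdots1(d+1)$ and having no NE-chain of length $>L$) is preserved by reflection. Alternatively it is automatic, since the reflected filling corresponds to the valid pair $(Q,P)$ in the bijection's domain.

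It follows that involutions in $S_n$ avoiding both patterns are in bijection with pairs $(P,P)$, where $P$ is a $(d,L)$-cylindric standard Young tableau of size $n$. These pairs are in turn in bijection with such tableaux $P$ themselves, which gives the stated equality of counts. The one point needing care is the dictionary between permutations, permutation matrices, and fillings, given the vertical reflection convention. That convention does not affect the statement that the inverse corresponds to reflection across $y=x$, since reversing row indexing and transposing are compatible in the way the paper sets up the identification. Beyond that, the argument is a direct consequence of the established symmetry.
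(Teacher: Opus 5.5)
Your proposal is correct and matches the paper's own proof: under the cylindric RS correspondence, inversion corresponds to swapping $P$ and $Q$, so involutions avoiding both patterns correspond exactly to the diagonal pairs $(T,T)$. Your additional checks, that the avoidance class is closed under inversion and that transposition matches reflection across $y=x$, are accurate and make explicit what the paper leaves implicit.
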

\begin{proof}
    Under the cylindric RS correspondence, involutions in $S_n$ that avoid the patterns $d \cdots 1 (d+1)$ and $1 \cdots (L+1)$ correspond to pairs $(T,T)$ where $T$ is a $(d,L)$-cylindric standard Young tableau of size $n$.
\end{proof}

For comparison, the number of standard Young tableaux of size $n$ is equal to the number of involutions in $S_n$. The number of $d$-standard Young tableaux of size $n$ is equal to the number of involutions in $S_n$ that avoid the pattern $(d+1) \cdots 1$. These facts follow from the classical RS correspondence. In Section \ref{sec:permutations} we will discuss other sets of permutations that are also in bijection with these.

\begin{remark}
    Recently, Huh, Kim, Krattenthaler, and Okada \cite{huh2025kkc} discovered other combinatorial expressions for the number of $(d,L)$-cylindric standard Young tableaux of a given size. It would be interesting to establish a direct bijection between their expressions and the one given above. The main results of Huh et al. are certain symmetric function identities, which are affine analogues of bounded Littlewood identities (see \cite[Sec. 1]{huh2025kkc}). Even in the non-affine setting these bounded Littlewood identities are not yet well understood from a bijective perspective (see \cite{fischer2024}).
\end{remark}

\section{Proof of Theorem \ref{thm:dRSKchains}} \label{sec:dRSKchains}

We now prove our results on se-chains and NE-chains in $d$-RSK growth diagrams. We give this proof prior to the proof of Theorem \ref{thm:dRSKgrowth} because the result on se-chains will be needed in the proof of Theorem \ref{thm:dRSKgrowth} (a) and (b).

\subsection{Lemmas}
We first prove two lemmas that give sufficient conditions for deducing that certain cells in a $d$-RSK growth diagram satisfy the ordinary RSK local rule.

\begin{lemma} \label{lem:dRSKchains1}
Fix a $d$-RSK growth diagram on a Young diagram $F$. Let $p=(x,y)$ be a lattice point of $F$. If the partition assigned to $p$ has length less than $d$, then all the cells in $\Rect_p$ satisfy the RSK local rule. More generally, all the cells of $\Rect_{(x+1,y)}$ and $\Rect_{(x,y+1)}$ do as well (assuming these cells are in $F$).
\end{lemma}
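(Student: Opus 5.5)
The plan is to use the monotonicity in Proposition~\ref{prop:dRSKadjacent}(i) together with the remark, made right after the two local rules are stated, that the $d$-RSK rule becomes the RSK rule as soon as $\min(\mu_d,\nu_d)=0$.

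Let $\lambda$ be the partition at $p=(x,y)$, and assume $\ell(\lambda)<d$, that is, $\lambda_d=0$. I would show that every cell of $\Rect_{(x+1,y)}$ and of $\Rect_{(x,y+1)}$ satisfies the RSK rule. This covers $\Rect_p$, since $\Rect_p$ is contained in both rectangles.

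Take a cell $C$ of $\Rect_{(x+1,y)}$ and label its corners $\kappa,\mu,\nu,\rho$ as in Figure~\ref{fig:single_cell}. Its upper-left corner, which carries $\mu$, has $x$-coordinate at most $x$ and $y$-coordinate at most $y$. So this corner lies weakly below and weakly to the left of $p$. By Proposition~\ref{prop:dRSKadjacent}(i) we get $\mu\subseteq\lambda$, hence $\mu_d\le\lambda_d=0$ and therefore $\min(\mu_d,\nu_d)=0$. The case of a cell in $\Rect_{(x,y+1)}$ is symmetric. There the lower-right corner, carrying $\nu$, lies weakly below-left of $p$, so $\nu_d=0$. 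This case also follows from the first by the reflection in Proposition~\ref{prop:dRSKadjacent}(iii).

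It then remains to check that $\min(\mu_d,\nu_d)=0$ really makes the two rules coincide. Since $\kappa\prec\mu$ and $\kappa\prec\nu$, we have $\kappa_d\le\min(\mu_d,\nu_d)=0$. So $\kappa_d=0$, and the side condition ``$m=0$ or $\kappa_d=0$'' holds automatically. In the vector identity \eqref{eq:dRSK}, the entry $\kappa_d$ in the top row of the left-hand side and the term $\min(\mu_d,\nu_d)$ in the top row of the right-hand side both vanish. The identity then reads exactly as \eqref{eq:RSK} in coordinates $1$ through $d$.

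The coordinates beyond $d$ still need to be accounted for. For $i=d+1$, the RSK rule asks that $\rho_{d+1}+\kappa_d=\min(\mu_d,\nu_d)+\max(\mu_{d+1},\nu_{d+1})$. Every term here is $0$, because all four partitions are $d$-partitions and $\kappa_d=\min(\mu_d,\nu_d)=0$. Every higher coordinate is trivially $0=0$. The interlacing requirements are the same in both rules.

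None of these steps is a real obstacle. The only place that needs care is this last bookkeeping: the RSK identity has one more nontrivial row than the $d$-RSK identity (row $d+1$), and that row is exactly where $\kappa_d=0$ and $\min(\mu_d,\nu_d)=0$ are needed.
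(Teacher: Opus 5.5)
Your proof is correct and follows essentially the same route as the paper's: Proposition~\ref{prop:dRSKadjacent}(i) forces the top-left (resp.\ bottom-right) corner of each relevant cell to have length less than $d$, and then $\min(\mu_d,\nu_d)=0$ collapses the $d$-RSK rule to the RSK rule. The paper only calls that last step a ``straightforward consequence of the definitions.'' You go further by verifying it, deriving $\kappa_d=0$ from interlacing and checking row $d+1$ of \eqref{eq:RSK}, and you also make precise which cells ``adjacent to $\Rect_p$'' are meant.
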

\begin{proof}
First we restate an observation made previously: if a cell satisfies the $d$-RSK local rule and either of the partitions on the top-left or bottom-right corner has length less than $d$, then the cell also satisfies the RSK local rule. This is a straightforward consequence of the definitions of the local rules. 

This observation is sufficient to see that the lemma holds. Indeed, the hypothesis states that the partition assigned to $p$ has length less than $d$, and this implies that the partition at every lattice point of $\Rect_p$ has length less than $d$ as well. The lemma follows by applying the observation to every cell in $\Rect_p$ and to the cells adjacent to $\Rect_p$.
\end{proof}

\begin{lemma} \label{lem:dRSKchains2} Fix a $d$-RSK growth diagram on a Young diagram $F$. Let $p$ be a lattice point of $F$ such that $\Rect_p$ contains no se-chain of length $d$. Then all the cells in $\Rect_p$ satisfy the RSK local rule.
\end{lemma}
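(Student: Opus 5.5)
We argue by induction over the cells of $\Rect_p$, processing them in an order compatible with the partial order ``weakly below and weakly to the left'' (for instance, row by row from the bottom, left to right within each row). The claim to establish inductively is: for every lattice point $q$ of $\Rect_p$, the partition $\lambda(q)$ assigned to $q$ has length less than $d$, and every cell of $\Rect_q$ satisfies the RSK local rule. Once this holds for $q=p$, we are done. (Alternatively, once $\ell(\lambda(p))<d$ is known, Lemma~\ref{lem:dRSKchains1} immediately gives the conclusion, so the real content is the bound on lengths.)

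Base case: lattice points on the axes carry $\emptyset$, whose length is $0<d$. Inductive step: take a cell of $\Rect_p$ with corners $\kappa,\mu,\nu,\rho$ (in the notation of Figure~\ref{fig:single_cell}), where $\mu,\nu,\kappa$ are already known to have length $<d$, and all cells of $\Rect_\mu\cup\Rect_\nu$ satisfy the RSK local rule. Since $\ell(\mu)<d$, we have $\min(\mu_d,\nu_d)=0$, so this cell also satisfies the RSK local rule, as observed in the proof of Lemma~\ref{lem:dRSKchains1}. Consequently the whole rectangle $\Rect_\rho$ is a region in which every cell obeys the RSK local rule, with empty partitions on the axes. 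It is therefore (the restriction of) an RSK growth diagram on the rectangle $\Rect_\rho$, so we may apply Theorem~\ref{thm:RSKchains}: $\ell(\rho)$ equals the length of the longest se-chain in $\Rect_\rho\subseteq\Rect_p$, which is at most $d-1$ by hypothesis. Thus $\ell(\rho)<d$, completing the induction.

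The only step requiring care is justifying the application of Theorem~\ref{thm:RSKchains} to a sub-rectangle of a $d$-RSK growth diagram. We will note explicitly that restricting the data to $\Rect_\rho$ gives a legitimate RSK growth diagram: the boundary condition holds because the axes carry $\emptyset$, every cell in it satisfies the RSK rule by the induction, and the filling is just the restricted filling. The induction order also needs checking: at the time we treat a cell, $\Rect_\mu$ and $\Rect_\nu$ must have been fully handled. This is guaranteed by the bottom-to-top, left-to-right order, since each cell of $\Rect_\rho$ other than the current one lies in $\Rect_\mu\cup\Rect_\nu$.
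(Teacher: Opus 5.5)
Your proof is correct and takes essentially the same approach as the paper: induct over lattice points, apply Theorem~\ref{thm:RSKchains} to a sub-rectangle whose cells are already known to satisfy the RSK rule to bound the corner partition's length by $d-1$, and then use the observation behind Lemma~\ref{lem:dRSKchains1} (a corner partition of length $<d$ forces the RSK rule on the adjacent cell) to extend the region. The only difference is bookkeeping: the paper inducts on the lemma's own statement via $P(x,y)\Rightarrow P(x+1,y),P(x,y+1)$ and cites Lemma~\ref{lem:dRSKchains1}, whereas you fix $p$ and sweep cell by cell through $\Rect_p$, carrying the strengthened invariant $\ell(\lambda(q))<d$.
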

\begin{proof}
We prove the statement by induction. Fix a choice of growth diagram, and let ``$P(x,y)$'' denote the statement of the lemma at the point $p=(x,y)$. It is clear that $P(0,0)$ is true. Now suppose $P(x,y)$ holds for some arbitrary $(x,y)$. We claim that $P(x+1,y)$ also holds. Indeed, if $\Rect_{(x+1,y)}$ contains no se-chain of length $d$, then neither does $\Rect_{(x,y)}$, so by the inductive hypothesis we know that all the cells in $\Rect_{(x,y)}$ satisfy the RSK local rule. This means we can treat $\Rect_{(x,y)}$ as an RSK growth diagram, so the partition assigned to $(x,y)$ has length less than $d$ by Theorem~\ref{thm:RSKchains}. Hence, by the previous lemma all the cells in $\Rect_{(x+1,y)}$ satisfy the RSK local rule. A similar argument shows that $P(x,y)$ implies $P(x,y+1)$. This completes the induction.
\end{proof}

\subsection{se-chains}
We can use the lemmas to quickly deduce the statement about se-chains in Theorem \ref{thm:dRSKchains}. Fix a point $p$ in a $d$-RSK growth diagram. Let $\lambda$ denote the partition assigned to $p$, and let $K$ denote the length of the longest se-chain in $\Rect_p$. If either $\ell(\lambda) < d$ or $K < d$, then Lemma \ref{lem:dRSKchains1} or Lemma \ref{lem:dRSKchains2} imply that all the cells in $\Rect_p$ satisfy the RSK local rule. Hence, Theorem \ref{thm:RSKchains} implies $\ell(\lambda) = K$, and so $\ell(\lambda) = \min(d,K)$. The only remaining case is when both $\ell(\lambda) = d$ and $K \geq d$. In that case, clearly $\ell(\lambda) = \min(d,K)$.

\subsection{NE-chains} \label{sec:NEchains}
Now we prove the statement about NE-chains in Theorem~\ref{thm:dRSKchains}. We will prove it by induction on the size of the Young diagram $F$. The base case when $F$ is empty is trivial. When $F$ is nonempty, we inductively assume the statement holds for all smaller diagrams.

Fix an arbitrary $d$-RSK growth diagram on $F$. Choose a cell $C$ with the property that removing $C$ from $F$ yields a smaller Young diagram $\widetilde{F}$. Our inductive hypothesis implies that the length of the longest NE-chain in $\widetilde{F}$ equals the minimum cylindric width of the $d$-semistandard oscillating tableau obtained from the outer boundary of $\widetilde{F}$.

Let $T=(\lambda^{(0)},\lambda^{(1)},\ldots,\lambda^{(r)})$ denote the $d$-semistandard oscillating tableau obtained from the outer boundary of $F$. There exists some index $j$ such that $\lambda^{(j-1)}, \lambda^{(j)}, \lambda^{(j+1)}$ are the partitions assigned to the bottom-right, top-right, and top-left corners of $C$, respectively. Let $\widetilde{\lambda}^{(j)}$ denote the partition assigned to the bottom-left corner of $C$, and let $\widetilde{T}$ denote the $d$-semistandard oscillating tableau obtained from $T$ by replacing $\lambda^{(i)}$ with $\widetilde{\lambda}^{(i)}$. Then $\widetilde{T}$ is precisely the oscillating tableau obtained from the outer boundary of $\widetilde{F}$ \emph{except} it may contain an extra copy of the empty partition at the beginning or end if $C$ is adjacent to the $x$-axis or $y$-axis. These extra copies of the empty partition do not affect the minimum cylindric width of $\widetilde{T}$. Consequently, the length of the longest NE-chain in $\widetilde{F}$ equals $\MCW_d(\widetilde{T})$. We will use this to deduce that the length of the longest NE-chain in $F$ equals $\MCW_d(T)$.

Let $m$ denote the entry in the cell $C$. From the definition of minimum cylindric width and from the $d$-RSK local rule at $C$, we have
\begin{align}
\MCW_d(T) &= \max\left(\max_{i\neq j,j+1} \MCW_d(\lambda^{(i-1)}, \lambda^{(i)}), \lambda^{(j)}_1-\lambda^{(j-1)}_d,\lambda^{(j)}_1-\lambda^{(j+1)}_d\right) \nonumber \\
&= \max\left(\max_{i\neq j,j+1} \MCW_d(\lambda^{(i-1)}, \lambda^{(i)}), \lambda^{(j)}_1-\min(\lambda^{(j-1)}_d,\lambda^{(j+1)}_d)\right) \nonumber \\
&= \max\left(\max_{i\neq j,j+1} \MCW_d(\lambda^{(i-1)}, \lambda^{(i)}), m+\max(\lambda^{(j-1)}_1, \lambda^{(j+1)}_1) - \widetilde{\lambda}^{(j)}_d\right) \nonumber \\
&= \max\left(\max_{i\neq j,j+1} \MCW_d(\lambda^{(i-1)}, \lambda^{(i)}), m+\lambda^{(j-1)}_1 - \widetilde{\lambda}^{(j)}_d, m+\lambda^{(j+1)}_1 - \widetilde{\lambda}^{(j)}_d\right). \label{eq:MCWT}
\end{align}
The $d$-RSK local rule at $C$ also implies that at least one of $m$ or $\widetilde{\lambda}^{(j)}_d$ is zero. We will consider these two cases separately.

\textbf{Case 1: $m=0$.} In this case the length of the longest NE-chain in $F$ equals the length of the longest NE-chain in $\widetilde{F}$. Hence, it suffices to show that $\MCW_d(T) = \MCW_d(\widetilde{T})$. This follows immediately from \eqref{eq:MCWT} and the definition of $\MCW_d(\widetilde{T})$.

\textbf{Case 2: $\widetilde{\lambda}^{(j)}_d=0$.} First we introduce some additional notation. For any Young diagram $G \subseteq F$, we shall write $\LNE(G)$ to denote the length of the longest NE-chain in $G$ (with respect to the filling of $F$ that we have already fixed). Let $p=(x,y)$ denote the bottom-left corner of $C$. Then by the definition of NE-chains we have
\[
\LNE(F) = \max\left(\LNE(\widetilde{F}), m+\LNE(\Rect_{(x+1,y)}), m+\LNE(\Rect_{(x,y+1)})\right).
\]

We will rewrite the right-hand side of this expression. As noted previously, the induction hypothesis implies $\LNE(\widetilde{F}) = \MCW_d(\widetilde{T})$. We also claim that $\LNE(\Rect_{(x+1,y)}) = \lambda^{(j-1)}_1$ and $\LNE(\Rect_{(x,y+1)}) = \lambda^{(j+1)}_1$. To see this, first note that the partitions assigned to $(x+1,y)$ and $(x,y+1)$ are $\lambda^{(j-1)}$ and $\lambda^{(j+1)}$, respectively. Hence, the claim would follow from Theorem \ref{thm:RSKchains} if all the cells in $\Rect_{(x+1,y)}$ and $\Rect_{(x,y+1)}$ satisfy the RSK local rule. This holds by Lemma \ref{lem:dRSKchains1} since the partition assigned to $(x,y)$ is $\widetilde{\lambda}^{(j)}$, which is assumed to have length less than $d$ in this case.

Putting these results of the previous paragraph together we have
\begin{equation} \label{eq:LNEF}
\LNE(F) = \max\left(\MCW_d(\widetilde{T}), m+\lambda^{(j-1)}_1, m+\lambda^{(j+1)}_1\right).
\end{equation}
Also, from the definition of the minimum cylindric width of $\widetilde{T}$ we have
\begin{align*}
\MCW_d(\widetilde{T}) &= \max\left(\max_{i \neq j,j+1} \MCW_d(\lambda^{(i-1)}, \lambda^{(i)}),\lambda^{(j-1)}_1-\widetilde{\lambda}^{(j)}_d, \lambda^{(j+1)}_1 - \widetilde{\lambda}^{(j)}_d\right) \\
&= \max\left(\max_{i \neq j,j+1} \MCW_d(\lambda^{(i-1)}, \lambda^{(i)}),\lambda^{(j-1)}_1, \lambda^{(j+1)}_1\right).
\end{align*}
Inserting this into \eqref{eq:LNEF}, we obtain
\[
\LNE(F) = \max\left(\max_{i\neq j,j+1} \MCW_d(\lambda^{(i-1)}, \lambda^{(i)}), m+\lambda_1^{(j-1)}, m+\lambda_1^{(j+1)} \right).
\]
This is equal to $\MCW_d(T)$ by \eqref{eq:MCWT}, completing the proof. 

\section{Proof of Theorem \ref{thm:dRSKgrowth}} \label{sec:dRSKgrowth}

Next we prove our main result establishing the oscillating $d$-RSK correspondence. This is similar to standard proofs involving growth diagrams, but the $d$-RSK local rule introduces some complications that require some care.

\subsection{Local growth lemmas}
To construct $d$-RSK growth diagrams we first describe how to construct a single cell satisfying the $d$-RSK local rule. We will refer to the process of constructing a single cell as \emph{local growth}.\footnote{As we have noted previously, some authors use the term ``local rule'' for this.} There are two directions in which we can perform local growth. The first direction is illustrated in the following picture and is described in the subsequent lemma.

\begin{tikzpicture}
    \node[anchor=east] at (-3.25,0.5) {Forward local growth:};
    \begin{scope}[xshift=-2.75cm]
        \draw (0,0) -- (1,0);
        \draw (0,0) -- (0,1);
        \draw[dashed] (0,1) -- (1,1);
        \draw[dashed] (1,0) -- (1,1);
        \node at (.5,.5) {$m$};
        \node at (-.2,-.2) {$\kappa$};
        \node at (-.2,1.2) {$\mu$};
        \node at (1.2,-.2) {$\nu$};
    \end{scope}
    
    \draw[->,line width=.7pt] (-0.75,.5) -- (0.75,.5);
    
    \begin{scope}[xshift=1.75cm]
        \draw (0,0) rectangle (1,1);
        \node at (.5,.5) {$m$};
        \node at (-.2,-.2) {$\kappa$};
        \node at (-.2,1.2) {$\mu$};
        \node at (1.2,-.2) {$\nu$};
        \node at (1.2,1.2) {$\rho$};
    \end{scope}
\end{tikzpicture}

\begin{lemma}[Forward local growth]\label{lem:forwardlocal}
Suppose $\mu \succ \kappa \prec \nu$ are $d$-partitions and $m$ is a nonnegative integer. Also, suppose that $\kappa_d=0$ or $m=0$. Then there exists a unique $d$-partition $\rho$ such that the resulting cell with entry $m$ and corner partitions $\kappa,\mu,\nu,\rho$ (as labelled above) satisfies the $d$-RSK local rule.
\end{lemma}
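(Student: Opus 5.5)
Uniqueness is immediate, so the plan is to take $\rho$ to be the vector forced by \eqref{eq:dRSK} and check that it works. Given $\kappa,\mu,\nu,m$, equation \eqref{eq:dRSK} determines every $\rho_i$:
\[
\rho_1 = m+\min(\mu_d,\nu_d)+\max(\mu_1,\nu_1)-\kappa_d,\qquad
\rho_i=\min(\mu_{i-1},\nu_{i-1})+\max(\mu_i,\nu_i)-\kappa_{i-1}\ (2\le i\le d).
\]
It then remains to show that this $\rho$ is a $d$-partition with $\mu\prec\rho\succ\nu$. The other conditions of the $d$-RSK local rule ($\mu\succ\kappa\prec\nu$, all four are $d$-partitions, and $m=0$ or $\kappa_d=0$) hold by hypothesis.

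I would split into the two cases allowed by the hypothesis.

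\textbf{Case $\kappa_d=0$.} Then $\rho_1=m+\min(\mu_d,\nu_d)+\max(\mu_1,\nu_1)$. This is the RSK rule \eqref{eq:RSK} applied to modified data: entry $m'=m+\min(\mu_d,\nu_d)$, truncated at $d$ rows. I would check directly that $\rho_{d+1}$ computed by \eqref{eq:RSK} is $\min(\mu_d,\nu_d)-\kappa_d=\min(\mu_d,\nu_d)$, which is generally nonzero. So the cleaner route is not to reduce to RSK but to verify the interlacing inequalities by hand. We need:
\begin{enumerate}[(i)]
\item $\rho_i\ge\max(\mu_i,\nu_i)$. This holds because $\min(\mu_{i-1},\nu_{i-1})\ge\kappa_{i-1}$, using $\kappa\prec\mu,\nu$.
\item $\rho_{i+1}\le\min(\mu_i,\nu_i)$. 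Write $\rho_{i+1}=\min(\mu_i,\nu_i)+\max(\mu_{i+1},\nu_{i+1})-\kappa_i$; the claim is then $\max(\mu_{i+1},\nu_{i+1})\le\kappa_i$, which is interlacing again.
\item The only wrap-around inequality is $\min(\mu_d,\nu_d)\ge\rho_{d+1}=0$, which holds trivially since we are in $d$ rows.
\end{enumerate}
The extra term $\min(\mu_d,\nu_d)$ only enlarges $\rho_1$, so it affects only $\rho_1\ge\max(\mu_1,\nu_1)$, and that inequality is helped. Together (i)–(iii) give that $\rho$ is weakly decreasing and nonnegative, so it is a $d$-partition, and that $\mu\prec\rho\succ\nu$.

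\textbf{Case $m=0$, $\kappa_d>0$.} The same inequalities for $2\le i\le d$ go through verbatim. The new point is $\rho_1\ge\max(\mu_1,\nu_1)$, which now reads $\min(\mu_d,\nu_d)\ge\kappa_d$; this again holds by $\kappa\prec\mu,\nu$. Likewise $\rho_2\le\min(\mu_1,\nu_1)$ uses $\kappa_1\ge\max(\mu_2,\nu_2)$, exactly as before. So in fact both cases reduce to one uniform verification. The hypothesis ``$\kappa_d=0$ or $m=0$'' is not needed for existence; it is there only because it is part of the local rule to be satisfied.

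The main obstacle, such as it is, is bookkeeping. One must write out the full chain $\rho_1\ge\mu_1,\nu_1\ge\rho_2\ge\cdots\ge\rho_d\ge\mu_d,\nu_d\ge0$ and confirm that each link reduces to one of the interlacing inequalities $\mu_i,\nu_i\ge\kappa_i\ge\mu_{i+1},\nu_{i+1}$, together with nonnegativity of $m$ and of $\mu_d,\nu_d$ for the last link. In particular, I will double-check the indices at the cyclic link involving $\kappa_d$ in $\rho_1$.
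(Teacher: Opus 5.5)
Your proposal is correct and follows essentially the same route as the paper. The paper also takes $\rho$ to be forced by \eqref{eq:dRSK} and derives $\mu\prec\rho\succ\nu$ (and hence that $\rho$ is a $d$-partition) from $\mu\succ\kappa\prec\nu$; your link-by-link check, including the cyclic link $\rho_1-\max(\mu_1,\nu_1)=m+\min(\mu_d,\nu_d)-\kappa_d\ge 0$ and your observation that the case split is unnecessary for the inequalities, fills in what the paper calls straightforward.
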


\begin{proof}
From equation \eqref{eq:dRSK} for the $d$-RSK local rule, the values of $\rho_1,\ldots, \rho_d$ are completely determined. It is straightforward to check that $\mu \prec \rho \succ \nu$ follows as a consequence of the hypothesis $\mu \succ \kappa \prec \nu$. The fact that $\rho$ is a partition (i.e. $\rho_1 \geq \cdots \geq \rho_d$) follows \emph{a fortiori} from the interlacing inequalities.
\end{proof}

The other direction of local growth is illustrated in the following picture and is described in the subsequent lemma.

\begin{tikzpicture}
    \node[anchor=east] at (-3.25,0.5) {Backward local growth:};
    \begin{scope}[xshift=-2.75cm]
        \draw[dashed] (0,0) -- (1,0);
        \draw[dashed] (0,0) -- (0,1);
        \draw (0,1) -- (1,1);
        \draw (1,0) -- (1,1);
        \node at (-.2,1.2) {$\mu$};
        \node at (1.2,-.2) {$\nu$};
        \node at (1.2,1.2) {$\rho$};
    \end{scope}

    \draw[->,line width=.7pt] (-0.75,.5) -- (0.75,.5);

    \begin{scope}[xshift=1.75cm]
        \draw (0,0) rectangle (1,1);
        \node at (.5,.5) {$m$};
        \node at (-.2,-.2) {$\kappa$};
        \node at (-.2,1.2) {$\mu$};
        \node at (1.2,-.2) {$\nu$};
        \node at (1.2,1.2) {$\rho$};
    \end{scope}
\end{tikzpicture}

\begin{lemma}[Backward local growth]\label{lem:backwardlocal}
Suppose $\mu \prec \rho \succ \nu$ are $d$-partitions. Then there exists a unique $d$-partition $\kappa$ and a unique nonnegative integer $m$ such that the resulting cell with entry $m$ and corner partitions $\kappa,\mu,\nu,\rho$ (as labelled above) satisfies the $d$-RSK local rule.
\end{lemma}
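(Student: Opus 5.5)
The plan is to mirror the proof of Lemma~\ref{lem:forwardlocal}. First solve the $d$-RSK equation \eqref{eq:dRSK} for the unknowns $\kappa_1,\dots,\kappa_d$ and $m$; this gives uniqueness. Then check that the solution satisfies the interlacing and nonnegativity requirements of the local rule; this gives existence.

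\textbf{Solving for $\kappa_1,\dots,\kappa_{d-1}$.} Rows $2$ through $d$ of \eqref{eq:dRSK} each contain exactly one unknown. For $2\le i\le d$ they give
\[
\kappa_{i-1} = \min(\mu_{i-1},\nu_{i-1}) + \max(\mu_i,\nu_i) - \rho_i ,
\]
so $\kappa_1,\dots,\kappa_{d-1}$ are forced.

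\textbf{Solving for $\kappa_d$ and $m$.} The first row involves both remaining unknowns. Rearranged, it reads
\[
\kappa_d - m = t, \qquad t := \min(\mu_d,\nu_d) + \max(\mu_1,\nu_1) - \rho_1 .
\]
The local rule also requires $m=0$ or $\kappa_d=0$, with both quantities nonnegative. So the pair is forced:
\begin{itemize}
\item if $t\ge 0$, then $(\kappa_d,m)=(t,0)$;
\item if $t<0$, then $(\kappa_d,m)=(0,-t)$.
\end{itemize}
When $t=0$ both descriptions give $(0,0)$. This establishes uniqueness: there is at most one $(\kappa,m)$ making the cell satisfy the $d$-RSK local rule.

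\textbf{Existence.} Define $\kappa$ and $m$ by the formulas above. By construction $m\ge 0$, $\kappa_d\ge 0$, $m\kappa_d=0$, and \eqref{eq:dRSK} holds. It remains to verify $\mu\succ\kappa\prec\nu$. Since $\mu$ and $\nu$ are $d$-partitions, this amounts to
\[
\min(\mu_{i},\nu_{i}) \ \ge\ \kappa_i \ \ge\ \max(\mu_{i+1},\nu_{i+1}) \quad (1\le i\le d-1), \qquad \min(\mu_d,\nu_d)\ \ge\ \kappa_d\ \ge\ 0 .
\]
Each of these follows from the hypothesis $\mu\prec\rho\succ\nu$:
\begin{itemize}
\item From $\rho_i\ge\max(\mu_i,\nu_i)$ for $2\le i\le d$, the formula gives $\kappa_{i-1}\le\min(\mu_{i-1},\nu_{i-1})$.
\item From $\rho_i\le\min(\mu_{i-1},\nu_{i-1})$ for $2\le i\le d$, the formula gives $\kappa_{i-1}\ge\max(\mu_i,\nu_i)$.
\item From $\rho_1\ge\max(\mu_1,\nu_1)$ we get $t\le\min(\mu_d,\nu_d)$, hence $\kappa_d=\max(t,0)\le\min(\mu_d,\nu_d)$.
\end{itemize}
The chain of interlacing inequalities then shows that $\kappa$ is weakly decreasing and nonnegative, so $\kappa$ is a $d$-partition. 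Finally, $\mu\prec\rho\succ\nu$ is given, so every condition of the $d$-RSK local rule holds.

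The only step needing care is the first row. Unlike the other rows, it has two unknowns, and the constraint ``$m=0$ or $\kappa_d=0$'' is exactly what resolves the ambiguity. The point to get right is the sign: the first row determines the difference $\kappa_d-m$, not the sum $\kappa_d+m$. Once this is noticed, the bound $t\le\min(\mu_d,\nu_d)$ supplies the last interlacing inequality, and every remaining check is a direct term-by-term inequality.
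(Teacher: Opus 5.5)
Your proof is correct and follows the paper's argument. Rows $2$ through $d$ of \eqref{eq:dRSK} force $\kappa_1,\dots,\kappa_{d-1}$, the first row fixes only $\kappa_d-m$, the requirement that $m=0$ or $\kappa_d=0$ then pins down the pair, and $\mu\succ\kappa\prec\nu$ follows from $\mu\prec\rho\succ\nu$; the only difference is that you write out the inequality checks (including $t\le\min(\mu_d,\nu_d)$) that the paper leaves as ``easy to check.''
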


\begin{proof}
The values of $\kappa_1,\ldots,\kappa_{d-1}$ are determined by equation \eqref{eq:dRSK}, and it is easy to check that the required interlacing inequalities hold for these parts. It remains to determine $\kappa_d$ and $m$. By picking out the first component of \eqref{eq:dRSK} and rearranging, we see that
\begin{equation}\label{eq:lem3} 
    \kappa_d - m = \min(\mu_d,\nu_d) + \max(\mu_1,\nu_1) - \rho_1.
\end{equation}
There may be many values of $\kappa_d,m\geq 0$ that satisfy this equation, but the $d$-RSK local rule requires that at least one of them must equal 0. Hence, if the right-hand side of \eqref{eq:lem3} is positive, we must take $m=0$ and define $\kappa_d$ accordingly. Similarly, if the right-hand side of \eqref{eq:lem3} is negative, we must take $\kappa_d=0$ and define $m$ accordingly. If the right-hand side is zero, then $m = \kappa_d=0$. Hence, the values of $\kappa_d$ and $m$ are uniquely determined, and it is easy to check $0\leq \kappa_d \leq \min(\mu_d,\nu_d)$. Consequently, the interlacing conditions $\mu \succ \kappa \prec \nu$ are satisfied, and $\kappa$ is indeed a partition.
\end{proof}

\subsection{Proof of theorem}

We are now in a position to prove Theorem \ref{thm:dRSKgrowth}. Throughout the remainder of the section, we fix a Young diagram $F$ whose outer boundary is encoded by the type sequence $w$.

\subsubsection{Proof of (a)}

Fix a $d$-RSK growth diagram on $F$. The fact that the sequence of partitions on the outer boundary forms a $d$-semistandard $w$-oscillating tableau is simply a consequence of the interlacing hypotheses in the $d$-RSK local rule.

It remains to show that the filling of the growth diagram avoids the pattern $d\cdots 1 (d+1)$. We will use the se-chain property in Theorem \ref{thm:dRSKchains}, which we proved in the previous section. Suppose for the sake of contradiction that the filling contains the pattern $d\cdots 1 (d+1)$. Fix a sequence of cells that witnesses this (i.e. the entries of these cells form an instance of the pattern). Let $C$ denote the final cell in the sequence and let $m$ denote the entry in $C$. Let $p$ denote the bottom-left corner of $C$, and let $\kappa$ denote the partition assigned to $p$. By the definition of what it means to contain the pattern $d\cdots 1 (d+1)$, we see that $m>0$ and $\Rect_p$ must contain a se-chain of length $d$. Hence, by Theorem \ref{thm:dRSKchains} we know that $\ell(\kappa)=d$. This means that $\kappa_d>0$, which is a contradiction since the $d$-RSK local rule for cell $C$ requires that either $m=0$ or $\kappa_d=0$.

\subsubsection{Proof of (b)} 
Fix a filling of $F$ that avoids the pattern $d\cdots 1 (d+1)$. We will show that there exists a unique $d$-RSK growth diagram on $F$ with this filling.

\emph{Uniqueness.} Suppose for the sake of contradiction that there are multiple $d$-RSK growth diagrams on $F$ with the given filling. Then there must be some lattice point $p=(x,y)$ of $F$ whose partition assignment is not uniquely determined. Choose $p$ such that $x+y$ is minimal among all lattice points with this property. Clearly $p$ does not lie on the axes. Hence, there is a cell $C$ in $F$ whose top-right corner is $p$. The partitions assigned to the other three corners of $C$ are uniquely determined (by the minimality of $x+y$). Therefore, by the forward local growth lemma there is at most one choice of partition that can be assigned to $p$ to make $C$ satisfy the $d$-RSK local rule. This is a contradiction.

\emph{Existence.} The process of constructing a growth diagram is simply a matter of iterating local growth to build up the diagram cell by cell. When constructing a $d$-RSK growth diagram from a filling there is slight complication: the forward local growth lemma has a hypothesis  (``$\kappa_d=0$ or $m=0$'') which is not obviously satisfied at each step. Verifying this hypothesis holds at each step requires us to use the fact that the filling avoids the pattern $d\cdots 1 (d+1)$. We give the details of the construction below.

First assign the empty partition to all the lattice points of $F$ on the axes. Let $C_1,\ldots, C_m$ be some enumeration of the cells of $F$ ordered in such a way that $\bigcup_{1\leq i \leq j} C_j$ is a valid Young diagram for all $1\leq j \leq m$. At step $j$ of the construction we would like to apply the forward local growth lemma at the cell $C_j$ to ``complete'' the cell. We claim that the hypotheses of the forward local growth lemma are satisfied at each step. 

Suppose we are on step $j$. By the choice of cell ordering, the left edge of $C_j$ either lies on the $y$-axis or is the right edge of a cell from a previous step. Hence, the two lattice points incident to this edge have already been assigned partitions, and those partitions interlace correctly. A similar argument applies to the bottom edge of $C_j$. Let $p$ denote the bottom-left corner of $C_j$, let $\kappa$ denote the partition assigned to $p$, and let $m$ denote the filling of $C_j$. It remains to check that $\kappa_d=0$ or $m=0$.  We consider two cases depending on whether $\Rect_p$ contains a se-chain of length $d$ or not. If it does, then $m=0$ since the filling of the diagram is assumed to avoid the pattern $d\cdots 1 (d+1)$. If it does not, then $\ell(\kappa)<d$ by Theorem \ref{thm:dRSKchains} (applied to $\Rect_p$), so $\kappa_d=0$. Hence, we have verified the hypotheses at each step. At the end of this process we have produced a growth diagram on $F$ with the desired filling.

\subsubsection{Proof of (c)}
Fix a choice of $d$-semistandard $w$-oscillating tableau assigned to the outer boundary of $F$. We will show that there exists a unique $d$-RSK growth diagram on $F$ whose outer boundary has this assignment of partitions.

\emph{Uniqueness.} Suppose for the sake of contradiction that there are multiple $d$-RSK growth diagrams on $F$ that have the required partition assignment on the outer boundary. Then there must exist a cell $C$ with bottom-left corner $p=(x,y)$ such that either the entry in $C$ is not uniquely determined or the partition assigned to $p$ is not uniquely determined. There may be multiple such $C$, so choose one such that $x+y$ is maximal. The partitions at the other three corners of $C$ are uniquely determined by the maximality of $x+y$. The backward local growth lemma implies that there is only one way to assign an entry to $C$ and a partition to $p$ to make $C$ satisfy the $d$-RSK local rule. This is a contradiction.

\emph{Existence.} As in the proof of (b), we build up the growth diagram iteratively. This time we list the cells of $F$ in the opposite order and apply the \emph{backwards} local growth lemma at each step. This is easier than the forward growth case since the only hypothesis of the backwards local growth lemma is that the partitions assigned to the three corners of the cell interlace correctly.

At the end of the process, we will have assigned partitions to all the lattice points of $F$ and entries to all the cells. It remains to check that the partitions assigned to the axes are the empty partition. For the partitions at the rightmost point on the $x$-axis and the topmost point on the $y$-axis this is true since these correspond to the first and last partitions in the oscillating tableaux assigned to the outer boundary. It then follows for the other partitions by inductively applying the interlacing property. This completes the proof.

\section{Skew $d$-RSK correspondence} \label{sec:skewdRSK}
In this section we construct another correspondence that we call the \emph{oscillating skew $d$-RSK correspondence}. This is closely related to the oscillating $d$-RSK correspondence although neither is a generalization of the other. At the end of the section we will explain how this correspondence relates to prior work of Neyman \cite{neyman2015}; Elizalde \cite{elizalde2025}; and Courtiel, Elvey Price, and Marcovici \cite{courtiel2021epm}.

\subsection{Definitions}
To define the skew $d$-RSK correspondence we must first introduce some additional definitions. We define a \emph{$d$-staircase} to be a finite sequence of integers $\lambda=(\lambda_1,\ldots,\lambda_d)$ such that $\lambda_1 \geq \cdots \geq \lambda_d$. If the $\lambda_i$'s are all nonnegative, then $\lambda$ can be identified with a $d$-partition.

All of the definitions given earlier for $d$-partitions extend naturally to $d$-staircases. For example, if $\alpha$ and $\beta$ are $d$-staircases, we write $\alpha \subseteq \beta$ if $\alpha_i \leq \beta_i$ for all $1 \leq i \leq d$. We write $\alpha \prec \beta$ if $\beta_1 \geq \alpha_1 \geq \beta_2 \geq \alpha_2 \geq \cdots \geq \beta_d \geq \alpha_d$. We write $\alpha \prec_{(d,L)} \beta$ if $\alpha \prec \beta$ and $\alpha_d \geq \beta_1 - L$. The \emph{size} of $\alpha$ is $|\alpha| = \alpha_1 + \cdots + \alpha_d$. These definitions are consistent with our earlier definitions when the $d$-staircases involved have nonnegative parts. 

A \emph{skew $d$-semistandard Young tableau of shape $\lambda/\mu$} is a sequence of $d$-staircases $\mu=\lambda^{(0)} \prec \lambda^{(1)} \prec \cdots \prec \lambda^{(r)}=\lambda$. A \emph{skew $d$-semistandard $w$-oscillating tableau of shape $\lambda/\mu$} is defined in the natural way. That is, the $\lambda^{(i)}$'s are allowed to interlace in either direction according to a type sequence $w$. For both of these definitions we call $\mu$ the \emph{inner shape} and $\lambda$ the \emph{outer shape}. The $(d,L)$-cylindric versions of these tableaux are also defined as expected (i.e. replace interlacing with $(d,L)$-interlacing), as are the definitions of \emph{minimum cylindric width} and \emph{weight}. The \emph{standard} versions of these tableaux are also defined in the obvious way.

It should be noted that a skew $d$-semistandard Young tableau of shape $\lambda/\emptyset$ is the same as a $d$-semistandard Young tableau of shape $\lambda$. However, a skew $d$-semistandard $w$-oscillating tableau of shape $\emptyset/\emptyset$ is \emph{not} the same as a $d$-semistandard $w$-oscillating tableau as the latter is not allowed to contain any partitions with negative parts.

\subsection{Skew $d$-RSK growth diagrams}
We now define the notion of a \emph{skew $d$-RSK growth diagram}. The data for such a diagram is the same as before except that each lattice point of the diagram is now assigned a $d$-staircase rather than a partition. The other difference is that we do not impose any boundary condition on these diagrams. That is, the partitions assigned to the axes are not required to be $\emptyset$. Each cell of a skew $d$-RSK growth diagram must satisfy the \emph{skew $d$-RSK local rule}, which we now state.

\medskip

\noindent\textbf{Skew $d$-RSK local rule:} Let $\kappa,\mu,\nu,\rho$ denote the $d$-staircases assigned to the corners of the cell (as labelled in Figure \ref{fig:single_cell}), and let $m$ denote the entry. It is required that $\mu \succ \kappa \prec \nu$ and $\mu \prec \rho \succ \nu$ and that $m=0$. Moreover,
\begin{equation} \label{eq:skewdRSK}
    \begin{pmatrix}
        \rho_1 \\ \rho_2 \\ \rho_3 \\ \vdots \\ \rho_d
    \end{pmatrix}
    +
    \begin{pmatrix}
        \kappa_d \\ \kappa_1 \\ \kappa_2 \\ \vdots \\ \kappa_{d-1}
    \end{pmatrix}=
    \begin{pmatrix}
        \min(\mu_d,\nu_d) + \max(\mu_1,\nu_1)\\ \min(\mu_1,\nu_1) + \max(\mu_2,\nu_2) \\ \min(\mu_2,\nu_2) + \max(\mu_3,\nu_3) \\ \vdots \\ \min(\mu_{d-1},\nu_{d-1}) + \max(\mu_d,\nu_d)
    \end{pmatrix}.
\end{equation}
\medskip

Comparing this local rule to the non-skew case, we see that the only difference (apart from the generalization to $d$-staircases) is that the entry $m$ is now required to be 0 in all cells. This means that the filling of a skew $d$-RSK growth diagram is trivial. Consequently, the correspondence we will obtain from these diagrams does not involve fillings.

\subsection{Oscillating skew $d$-RSK correspondence}

We now state our main results on skew $d$-RSK growth diagrams. The proofs of these are similar to (but easier than) the proofs we gave for the non-skew case, so we will be brief.

\begin{theorem} \label{thm:skewdRSKgrowth}
Let $w$ be a type sequence that has $r$ occurrences of `$+$' and $c$ occurrences of `$-$'. Let $\mathcal{P}^w$ denote the lattice path that starts at $(r,0)$ and ends at $(0,c)$ whose steps are encoded by $w$.
\begin{enumerate}[(a)]
    \item For any skew $d$-RSK growth diagram on $\Rect_{(r,c)}$, the sequence of $d$-staircases assigned to the points along $\mathcal{P}^w$ forms a skew $d$-semistandard $w$-oscillating tableau.
    \item Any skew $d$-semistandard $w$-oscillating tableau that is assigned to the points along $\mathcal{P}^w$ can be uniquely extended to a skew $d$-RSK growth diagram on $\Rect_{(r,c)}$.
\end{enumerate}
Consequently, there is an explicit bijection between skew $d$-RSK growth diagrams on $\Rect_{(r,c)}$ and skew $d$-semistandard $w$-oscillating tableaux.
\end{theorem}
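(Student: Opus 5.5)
Part (a) follows directly from the local rule. Part (b) rests on two facts: the lattice path $\mathcal{P}^w$ splits $\Rect_{(r,c)}$ into a region $A$ below/left of the path and a region $B$ above/right of it, and local growth in the skew setting is invertible in both directions with no extra hypotheses. I plan to mirror the proof of Theorem~\ref{thm:dRSKgrowth}(c), running backward growth on $A$ and forward growth on $B$.

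\emph{Part (a).} Consecutive points of $\mathcal{P}^w$ are joined by an edge of some cell. The skew $d$-RSK local rule requires $\mu\succ\kappa\prec\nu$ and $\mu\prec\rho\succ\nu$. So along a vertical edge the lower staircase interlaces into the upper one, and along a horizontal edge the left one interlaces into the right one. Traversing $\mathcal{P}^w$ from $(r,0)$, a `$+$' step goes up and a `$-$' step goes left. Hence a `$+$' step gives $\lambda^{(i-1)}\prec\lambda^{(i)}$ and a `$-$' step gives $\lambda^{(i-1)}\succ\lambda^{(i)}$, which is the definition of a skew $d$-semistandard $w$-oscillating tableau. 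No boundary condition needs checking.

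\emph{Local growth lemmas.} I first prove skew analogues of Lemmas~\ref{lem:forwardlocal} and~\ref{lem:backwardlocal} with $m=0$ forced.
\begin{itemize}
\item Forward: given $d$-staircases $\mu\succ\kappa\prec\nu$, equation~\eqref{eq:skewdRSK} determines $\rho$ uniquely. The check that $\mu\prec\rho\succ\nu$ is the same inequality verification as before. It never used nonnegativity, only the interlacing inequalities.
\item Backward: given $\mu\prec\rho\succ\nu$, $\kappa_1,\dots,\kappa_{d-1}$ are determined as before. The first component now determines $\kappa_d=\min(\mu_d,\nu_d)+\max(\mu_1,\nu_1)-\rho_1$ outright, with no case split, since $m=0$ and negative entries are allowed. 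I then check $\kappa_d\le\min(\mu_d,\nu_d)$, which is equivalent to $\rho_1\ge\max(\mu_1,\nu_1)$ and follows from interlacing. I also check $\kappa_d\le\kappa_{d-1}$. No lower bound is needed because staircases may have negative parts.
\end{itemize}
This is exactly where the skew setting is easier: the constraint ``$m=0$ or $\kappa_d=0$'' disappears.

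\emph{Part (b).} Given the tableau on $\mathcal{P}^w$, the cells of $\Rect_{(r,c)}$ split into those in region $A$ (below/left of the path, a Young diagram whose outer boundary is $\mathcal{P}^w$) and those in region $B$ (above/right of it).
\begin{itemize}
\item On $A$, I fill cells by backward local growth in decreasing order of $x+y$, exactly as in the proof of Theorem~\ref{thm:dRSKgrowth}(c).
\item On $B$, I fill cells by forward local growth in increasing order of $x+y$. At each step the two known edges of a cell come either from $\mathcal{P}^w$ or from earlier cells, so the interlacing hypotheses hold.
\end{itemize}
Uniqueness follows from the uniqueness in each local lemma via the same extremal-cell contradiction argument as before. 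The bijection statement is then (a) plus (b).

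The main obstacle is the bookkeeping for the ordering in region $B$. I need to confirm that when a cell of $B$ is processed, its bottom-left, top-left and bottom-right corners are already assigned. Processing cells in increasing $x+y$ order starting from the path should suffice: the complement of a Young diagram inside the rectangle is a ``reverse'' Young diagram, so its left and bottom edges always lie on the path or on earlier cells.
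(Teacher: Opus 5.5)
Your proposal is correct and takes the same route as the paper. Part (a) is read off from the interlacing conditions in the local rule. You then derive skew analogues of the forward and backward local growth lemmas, which are simpler because $m=0$ is forced and $\kappa_d$ may be negative, iterate them, and adapt the uniqueness argument from Theorem~\ref{thm:dRSKgrowth}. Your explicit split of $\Rect_{(r,c)}$ into the Young diagram below $\mathcal{P}^w$ (filled by backward growth) and its complement above the path (filled by forward growth) is exactly the bookkeeping the paper's brief proof leaves implicit, and your cell-ordering argument for it is sound.
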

\begin{proof}
The statement in (a) is immediate from the interlacing hypotheses in the local rule. To prove (b) one can first derive analogues of the forward and backward local growth lemmas (which are simpler than the non-skew case since the filling is trivial). These can be applied repeatedly to construct the entire diagram. This construction and the uniqueness proof are easily adapted from the proof of Theorem~\ref{thm:dRSKgrowth}.
\end{proof}

A consequence of Theorem \ref{thm:skewdRSKgrowth} is that if $w$ and $v$ are two type sequences with the same number of occurrences of `$+$' and `$-$', then there is an explicit bijection between skew $d$-semistandard $w$-oscillating tableaux and skew $d$-semistandard $v$-oscillating tableaux. This follows from the fact that both of these are in bijection with growth diagrams on the same underlying rectangle. We will collectively refer to all the bijections that arise in this way as the \emph{oscillating skew $d$-RSK correspondence}. This correspondence has the following properties.

\begin{theorem} \label{thm:skewdRSKproperties}
    Let $w$ and $v$ be type sequences. Let $T$ be a skew $d$-semistandard $w$-oscillating tableau and $P$ be a skew $d$-semistandard $v$-oscillating tableau such that $T$ and $P$ correspond to one another under the oscillating skew $d$-RSK correspondence. The following properties hold.
    \begin{enumerate}[(i)]
        \item $T$ and $P$ have the same shape. That is, they have the same inner shape and the same outer shape.
        \item $T$ and $P$ have the same weight. That is, $\wt^+(T) = \wt^+(P)$ and $\wt^-(T) = \wt^-(P)$.
        \item $T$ and $P$ have the same minimum cylindric width. That is, $\MCW_d(T) = \MCW_d(P)$.
        \item If $T^{\mathrm{rev}}$ is the reversal of $T$ (i.e. as a sequence of $d$-staircases), and $P^{\mathrm{rev}}$ is the reversal of $P$, then $T^{\mathrm{rev}}$ and $P^{\mathrm{rev}}$ correspond to each other under the oscillating skew $d$-RSK correspondence.
    \end{enumerate}
\end{theorem}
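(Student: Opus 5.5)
Throughout, I fix a skew $d$-RSK growth diagram on $\Rect_{(r,c)}$. By Theorem~\ref{thm:skewdRSKgrowth}, $T$ and $P$ are the sequences read along the paths $\mathcal{P}^w$ and $\mathcal{P}^v$ in this single diagram. The approach is to interpolate between the two paths by \emph{flips}: each flip replaces a corner consisting of two steps of a lattice path by the opposite corner of the same cell. Each of the four properties will be shown to be invariant under a single flip. Any two monotone lattice paths from $(r,0)$ to $(0,c)$ are connected by a finite sequence of such flips, so this suffices. All four statements then reduce to a local computation on one cell with corners $\kappa,\mu,\nu,\rho$, where the path goes through either $\nu,\rho,\mu$ or $\nu,\kappa,\mu$.

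For (i), the endpoints of every path are the fixed points $(r,0)$ and $(0,c)$, so the inner and outer shapes never change under a flip. For (ii), the local rule with $m=0$ gives $|\rho|+|\kappa|=|\mu|+|\nu|$. Hence $|\rho|-|\nu|=|\mu|-|\kappa|$, and similarly for the other edge. This means that the size increment attached to a given row (respectively column) of the rectangle is the same along every horizontal (respectively vertical) edge in that row (respectively column). A flip swaps the order of one `$+$' and one `$-$' in the type sequence, but it preserves the order among the `$+$'s and among the `$-$'s. It also preserves which row or column each step belongs to. Therefore $\wt^+$ and $\wt^-$ are unchanged.

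For (iii), I compare the contributions of the two corners to $\MCW_d$. Along the path through $\rho$, the contribution is $\rho_1-\min(\mu_d,\nu_d)$. Along the path through $\kappa$, it is $\max(\mu_1,\nu_1)-\kappa_d$. The first component of \eqref{eq:skewdRSK} with $m=0$ gives $\rho_1+\kappa_d=\min(\mu_d,\nu_d)+\max(\mu_1,\nu_1)$, so these two quantities are equal. The other terms in the maximum defining $\MCW_d$ are untouched by the flip, so $\MCW_d$ is preserved. This is exactly the Case~1 computation in Section~\ref{sec:NEchains}, now valid for staircases because $m=0$ always.

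For (iv), I expect the main obstacle. Reversing a tableau corresponds to reflecting across the anti-diagonal of the rectangle, which reverses $w$ and swaps $+$ and $-$. I first need the skew analogue of Proposition~\ref{prop:dRSKadjacent}(iii). The local rule \eqref{eq:skewdRSK} is symmetric in $\mu\leftrightarrow\nu$, and it is also symmetric under rotating the cell by $180^\circ$, i.e.\ $\kappa\leftrightarrow\rho$. The latter symmetry is the delicate point. I would check it by verifying that the equation is equivalent to its counterpart with the roles of $\kappa$ and $\rho$, and of $\min$ and $\max$, exchanged. This can be done componentwise: write $\kappa_i=\max(\mu_{i+1},\nu_{i+1})+\min(\mu_i,\nu_i)-\rho_{i+1}$ (indices cyclic, with the $d$-th component yielding $\kappa_d$) and confirm that this inverts the forward rule. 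Essentially, this is the content of the backward growth lemma with $m=0$ forced. Alternatively, one can apply the map $\lambda\mapsto(-\lambda_d,\dots,-\lambda_1)$, which turns $\prec$ into $\succ$. Negation exchanges $\min$ and $\max$, and together with reversing the index order it should carry the rule to the rotated rule. Given the rotation symmetry, rotating the whole diagram by $180^\circ$ (about the center of the rectangle) yields another skew $d$-RSK growth diagram. This rotation maps $\mathcal{P}^w$ to $\mathcal{P}^{w'}$ read in reverse, after negating and reversing each staircase if the negation route is used. In that route I would need to compose with the entrywise map once more, and I would check that this map commutes with the bijection.
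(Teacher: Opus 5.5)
Your treatment of (i)--(iii) matches the paper's proof. You reduce to two paths that go around a single cell in opposite directions. For the weights you use $|\kappa|+|\rho|=|\mu|+|\nu|$. For the minimum cylindric width you use the first component of \eqref{eq:skewdRSK}, which gives $\rho_1-\min(\mu_d,\nu_d)=\max(\mu_1,\nu_1)-\kappa_d$. Those parts are fine.

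Part (iv) has a genuine gap, because you use the wrong geometric transformation. Let $w'$ be $w$ reversed with `$+$' and `$-$' exchanged; this is the type of $T^{\mathrm{rev}}$. The transformation that produces $T^{\mathrm{rev}}$ is the reflection $(x,y)\mapsto(y,x)$ across the line $y=x$ through the origin, which sends $\Rect_{(r,c)}$ to $\Rect_{(c,r)}$. The image of $\mathcal{P}^w$, traversed from $(c,0)$ to $(0,r)$, is exactly $\mathcal{P}^{w'}$, and the staircases along it read off $T^{\mathrm{rev}}$; the same holds for $P$. On each cell this reflection fixes the bottom-left and top-right corners and swaps $\mu$ and $\nu$. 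So the only symmetry needed is the $\mu\leftrightarrow\nu$ symmetry of the local rule, which you already noted, and this is the paper's one-line proof. By contrast, the map through the other diagonal, $(x,y)\mapsto(c-y,r-x)$, keeps the order of the sequence and only exchanges step types. The $180^\circ$ rotation reverses the order but does not exchange step types: the rotated path read from $(r,0)$ has type $w$ reversed, not $w'$. So neither of these turns $T$ into $T^{\mathrm{rev}}$.

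The $\kappa\leftrightarrow\rho$ symmetry you rely on is also false as stated. Rotating a cell by $180^\circ$ turns the edge carrying $\mu\prec\rho$ into an edge that demands $\rho\prec\mu$. Solving \eqref{eq:skewdRSK} for $\kappa$ only restates backward growth; it is not a symmetry of the diagram. With $\lambda^*=(-\lambda_d,\ldots,-\lambda_1)$ applied at every lattice point, the rotation does become a genuine symmetry. But it then proves only that $(T^{\mathrm{rev}})^*$ and $(P^{\mathrm{rev}})^*$ correspond, as oscillating tableaux of types $w$ reversed and $v$ reversed. Your remaining step, that $X\mapsto X^*$ (with `$+$' and `$-$' exchanged) commutes with the correspondence, is left unproved. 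Proving it needs the anti-diagonal map combined with $*$, which is the rotation symmetry composed with the $y=x$ reflection. So the detour leads back to the argument you skipped; replace it with the single reflection across $y=x$.
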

\begin{proof}
    Let $r$ and $c$ denote the number of occurrences of `$+$' and `$-$' in $w$ (and hence in $v$). By the definition of the oscillating skew $d$-RSK correspondence, the tableaux $T$ and $P$ both come from the sequences of $d$-staircases in the same growth diagram on $\Rect_{(r,c)}$. More specifically, $T$ is obtained from a certain lattice path $\mathcal{P}^w$, and $P$ is obtained from a certain lattice path $\mathcal{P}^v$. These paths start at the bottom-right corner of $\Rect_{(r,c)}$ and end at the top-left.

    Property (i) follows from the fact that $\mathcal{P}^w$ and $\mathcal{P}^v$ have the same start point and the same end point. Properties (ii) and (iii) can be proved by first considering the special case where the lattice paths $\mathcal{P}^w$ and $\mathcal{P}^v$ differ at only a single point. That is, they go in different directions around a single cell of the growth diagram. Let $\kappa,\mu,\nu,\rho$ be the $d$-staircases assigned to the corners of this cell (as labelled in Figure \ref{fig:single_cell}). Property (ii) is a direct consequence of the fact that $|\kappa| + |\rho| = |\mu| + |\nu|$, and property (iii) is a direct consequence of the fact that
    \[
        \max(\MCW_d(\kappa, \mu), \MCW_d(\kappa,\nu)) = \max(\MCW_d(\mu,\rho), \MCW_d(\nu,\rho)).
    \]
    The general case of properties (ii) and (iii) can be handled by repeatedly applying this special case. Property (iv) follows from the fact that reflecting the entire growth diagram across $y=x$ produces another growth diagram. The reflected diagram witnesses the correspondence between $T^{\mathrm{rev}}$ and $P^{\mathrm{rev}}$.
\end{proof}

Naturally, properties (ii) and (iv) of this theorem are analogous to the properties of RSK and $d$-RSK that were stated in Proposition \ref{prop:weightreflection}. That is (ii) says the oscillating skew $d$-RSK correspondence is weight-preserving, and (iv) says it commutes with reflection. 

A nice enumerative consequence of Theorem \ref{thm:skewdRSKproperties} is that the number of skew $d$-semistandard $w$-oscillating tableaux of a given shape, weight, and minimum cylindric width is independent of the choice of type sequence $w$ (assuming we fix the number of occurrences of `$+$' and `$-$').

\subsection{Special cases and connections to prior work} \label{subsec:skewdRSKconnections}

The oscillating skew $d$-RSK correspondence can be restricted in various ways to obtain other correspondences. In particular, by placing an upper bound $L$ on the minimum cylindric width of the tableaux, we obtain a correspondence involving $(d,L)$-cylindric tableaux. Moreover, if we take the type sequences $w$ and $v$ to be of the form $w=+^r -^c$ and $v=-^c +^r$, we obtain a non-oscillating version of this cylindric correspondence between pairs of skew $(d,L)$-cylindric semistandard Young tableaux. Such a correspondence was constructed previously by Neyman \cite{neyman2015} using a variant of Schensted insertion. Our work gives an oscillating generalization of Neyman's correspondence.

In the case of cylindric \emph{standard} tableaux, Neyman's bijection was reformulated by Elizalde in \cite{elizalde2025} using growth diagrams. Our local rule agrees with Elizalde's in this setting (see Section \ref{sec:connections}). Elizalde also used the growth diagram construction to describe an oscillating cylindric correspondence. He showed that this correspondence is equivalent to a certain correspondence constructed by Courtiel, Elvey Price, and Marcovici \cite{courtiel2021epm} involving walks in a $d$-dimensional simplex.

We may summarize the contributions of this section as follows. Firstly, we showed that a simple modification of the $d$-RSK local rule leads to \emph{skew} correspondences in a natural way. Secondly, the cylindric correspondences that were considered by Neyman and Elizalde are actually restrictions of broader correspondences that are independent of the $L$ parameter. Thirdly, the cylindric correspondences that were considered by Neyman and Elizalde can be naturally extended to the semistandard oscillating setting.

\begin{remark}
After the first version of this paper was posted to the arXiv, Elizalde informed us that he had independently extended his work in \cite{elizalde2025} to the semistandard setting. This will appear in \cite{elizalde2026}. The local rule he uses for this is identical to our skew $d$-RSK local rule apart from the difference in perspective mentioned above (i.e. our correspondences are independent of the $L$ parameter).
\end{remark}

\begin{remark}
The skew $d$-RSK correspondence can also be used to give bijective proofs of certain identities involving skew Schur functions and cylindric Schur functions (see \cite[Thm. 5.14, Cor. 5.21]{neyman2015} for the cylindric case). In the non-cylindric setting, the resulting skew Schur function identity is equivalent to a certain relation involving sums of Littlewood--Richardson coefficients. This relation was first proved by Coquereaux and Zuber \cite[Thm.~1]{coquereaux2011z} using representation theory.
\end{remark}

\section{Conjugation, cointerlacing, and row-strict tableaux} \label{sec:dual}
In this section we discuss some dual notions in the theory of cylindric tableaux. We start by reviewing relevant facts and definitions for ordinary partitions and tableaux, and then we describe their cylindric analogues. At the end of the section we give an example of how these ideas can be combined with cylindric correspondences to obtain further bijections.

\subsection{Ordinary partitions and tableaux}
Given two partitions $\lambda$ and $\mu$, we say that \emph{$\lambda$ cointerlaces $\mu$}, denoted $\lambda \prec' \mu$, if $\mu_i-\lambda_i \in \{0,1\}$ for all $i \geq 1$. Like interlacing, cointerlacing is a non-transitive relation on partitions that is stronger than the $\subseteq$ relation. There is an important special case in which all three of these relations coincide: if $|\mu| - |\lambda| \in \{0,1\}$, then $\lambda \prec \mu \iff \lambda \subseteq \mu  \iff \lambda \prec' \mu$.

A \emph{row-strict Young tableau} is a sequence of partitions $\emptyset=\lambda^{(0)} \prec' \lambda^{(1)}\prec' \ldots \prec' \lambda^{(k)}$. This is the same as a semistandard Young tableau except that interlacing has been replaced with cointerlacing. We define \emph{weight} and \emph{shape} in the same way as the semistandard case.

Obviously, the notions of semistandardness and row-strictness are different, but they coincide when the weight vector of a tableau is in $\{0,1\}^k$. This follows from our earlier observation about interlacing and cointerlacing. In particular, the notion of a \emph{standard} Young tableau could be equivalently defined as either a semistandard or row-strict tableau of weight $(1,1,\ldots,1)$.

Given a partition $\lambda$, the \emph{conjugate of $\lambda$}, denoted $\lambda'$, is the partition whose Young diagram is obtained by reflecting the Young diagram of $\lambda$ across the $y=x$ line. This operation has the following well-known properties.

\begin{proposition} \label{prop:conjugation}
    Let $\lambda$ and $\mu$ be partitions. Then the following properties hold:
    \begin{enumerate}[(i)]
        \item $(\lambda')' = \lambda$,
        \item $|\lambda'| = |\lambda|$,
        \item $\lambda \subseteq \mu \iff \lambda' \subseteq \mu'$,
        \item $\lambda \prec' \mu \iff \lambda' \prec \mu'$.
    \end{enumerate}
\end{proposition}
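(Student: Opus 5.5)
The plan is to reduce everything to the standard description of a partition by its column lengths, $\lambda'_j = \#\{i : \lambda_i \geq j\}$. This is the same thing as the reflection of the Young diagram across $y=x$, since cell $(j,i)$ with $1\le j\le \lambda_i$ reflects to cell $(i,j)$. First I check that $\lambda'$ is a partition: the counts $\#\{i:\lambda_i\ge j\}$ are weakly decreasing in $j$. Each of the four properties then becomes a statement about the set of cells $D(\lambda)=\{(i,j): 1\le j\le \lambda_i\}$, using $D(\lambda')$ equal to the transpose of $D(\lambda)$.

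\emph{Properties (i)--(iii).} For (i), transposing twice is the identity on cell sets, and a partition is determined by its cell set. For (ii), transposition is a bijection $D(\lambda)\to D(\lambda')$, so $|\lambda'|=|\lambda|$. For (iii), $\lambda\subseteq\mu$ holds exactly when $D(\lambda)\subseteq D(\mu)$, and containment of sets is preserved by transposition. The reverse implication follows by applying the forward one to $\lambda',\mu'$ and using (i).

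\emph{Property (iv).} This is the step with actual content. I will show both sides are equivalent to the statement that the skew cells $D(\mu)\setminus D(\lambda)$, together with $\lambda\subseteq\mu$, form a horizontal versus vertical strip in the appropriate sense.

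By definition, $\lambda\prec'\mu$ means $\lambda\subseteq\mu$ and each row $i$ contains at most one cell of $D(\mu)\setminus D(\lambda)$; that is, the skew shape is a vertical strip. On the other side, for $\alpha\subseteq\beta$, I claim $\alpha\prec\beta$ holds if and only if each column of $D(\beta)\setminus D(\alpha)$ contains at most one cell, i.e.\ the skew shape is a horizontal strip. The interlacing inequalities also imply $\alpha\subseteq\beta$, so the containment is automatic on that side.

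To prove the claim, note that column $j$ of the skew shape consists of rows $i$ with $\alpha_i<j\le\beta_i$. Two such rows $i<i'$ exist if and only if $\alpha_i<j\le\beta_{i'}$. In that case the cells of rows $i$ through $i'$ in column $j$ are all skew cells, so we may take $i'=i+1$, giving $\beta_{i+1}>\alpha_i$. Hence the horizontal-strip condition is equivalent to $\beta_{i+1}\le\alpha_i$ for all $i$. Combined with $\alpha_i\le\beta_i$, this is exactly the chain of interlacing inequalities.

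Since transposition exchanges rows and columns, $D(\mu)\setminus D(\lambda)$ is a vertical strip if and only if $D(\mu')\setminus D(\lambda')$ is a horizontal strip. Together with (iii), which handles the containment $\lambda\subseteq\mu$, this gives (iv). The only care needed is the index bookkeeping in the horizontal-strip characterization, in particular the reduction from arbitrary rows $i<i'$ to adjacent rows.
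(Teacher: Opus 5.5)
Your proof is correct. The paper gives no proof of this proposition and simply cites the properties as well known; your characterization of $\prec$ and $\prec'$ as horizontal and vertical strips, exchanged by transposition, is the same geometric picture the paper relies on when it proves the cylindric analogue of (iv) in Proposition~\ref{prop:cyl_conjugation} (``at most one cell in each column'', Figure~\ref{fig:cylinterlace}).
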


Given any sequence of partitions $T=(\lambda^{(0)},\ldots,\lambda^{(k)})$ we define the \emph{conjugate of $T$}, denoted $T'$, to be the sequence of partitions $((\lambda^{(0)})',\ldots,(\lambda^{(k)})')$. The following result follows immediately from Proposition \ref{prop:conjugation}.

\begin{proposition}
    The conjugation operation $T\mapsto T'$ is an involution on sequences of partitions. It restricts to a weight-preserving bijection between semistandard Young tableaux and row-strict Young tableaux, and it restricts further to an involution on standard Young tableaux.
\end{proposition}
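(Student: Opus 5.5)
The statement follows directly from Proposition~\ref{prop:conjugation}; the plan is to check each claim on consecutive pairs of partitions and then assemble.

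\emph{Involution.} Applying conjugation entrywise twice returns each $\lambda^{(i)}$ by Proposition~\ref{prop:conjugation}(i), so $(T')'=T$. Hence $T\mapsto T'$ is an involution on all finite sequences of partitions.

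\emph{Semistandard to row-strict.} Let $T=(\lambda^{(0)},\ldots,\lambda^{(k)})$ be semistandard. Since $\emptyset'=\emptyset$, the sequence $T'$ starts at $\emptyset$. For each $i$ we have $\lambda^{(i-1)}\prec\lambda^{(i)}$. Applying Proposition~\ref{prop:conjugation}(iv) to the pair $\big((\lambda^{(i-1)})',(\lambda^{(i)})'\big)$ and using (i) gives
\[
(\lambda^{(i-1)})'\prec'(\lambda^{(i)})' \iff \lambda^{(i-1)}\prec\lambda^{(i)}.
\]
So $T'$ is row-strict. The same equivalence read backwards shows that the conjugate of a row-strict tableau is semistandard. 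Since conjugation is an involution, it is therefore a bijection between the two classes.

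\emph{Weight.} By Proposition~\ref{prop:conjugation}(ii), $|(\lambda^{(i)})'|=|\lambda^{(i)}|$ for every $i$. The differences $|\lambda^{(i)}|-|\lambda^{(i-1)}|$ are therefore unchanged, so $\wt(T')=\wt(T)$.

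\emph{Standard tableaux.} When the weight is $(1,\ldots,1)$, each step has $|\lambda^{(i)}|-|\lambda^{(i-1)}|=1$. By the earlier observation, interlacing and cointerlacing then coincide, so standard tableaux are exactly the semistandard tableaux of this weight and also exactly the row-strict tableaux of this weight. Conjugation preserves weight and exchanges the two classes, so it maps standard tableaux to standard tableaux. It is an involution there as well.

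There is no real obstacle. The only point needing care is applying (iv) in the correct direction, using (i) to move between $\prec$ and $\prec'$.
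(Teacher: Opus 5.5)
Your proof is correct and follows the same approach as the paper, which simply states that the result follows immediately from Proposition~\ref{prop:conjugation}. You have spelled out the routine checks it leaves implicit: (i) for the involution, (iv) together with (i) for exchanging $\prec$ and $\prec'$, (ii) for weight preservation, and the coincidence of interlacing and cointerlacing on single-cell steps for the standard case.
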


\begin{remark}
    Another natural kind of tableau can be defined using the $\subseteq$ relation instead of interlacing or cointerlacing. These are the \emph{reverse plane partitions}. We will not have any particular use for these objects, but conjugation also gives a weight-preserving involution on them. Standard Young tableaux are reverse plane partitions of weight $(1,1,\ldots,1)$.
\end{remark}

\subsection{Cylindric analogues} \label{sec:dualcyl}
We can extend the definition of cointerlacing to $d$-staircases in the obvious way: if $\lambda$ and $\mu$ are $d$-staircases, we say that $\lambda \prec' \mu$ if $\mu_i - \lambda_i \in \{0,1\}$ for all $1 \leq i \leq d$. A \emph{skew $d$-row-strict Young tableau of shape $\lambda/\mu$} is a sequence of $d$-staircases $\mu=\lambda^{(0)} \prec' \lambda^{(1)} \prec' \cdots \prec' \lambda^{(k)}=\lambda$. As in the non-skew case, the notions of semistandardness and row-strictness coincide when the weight vector is in $\{0,1\}^k$.

We would like to extend conjugation to $d$-staircases as well. However, to get a reasonable analogue, we must restrict to a special class of $d$-staircases. For any pair of positive integers $(d,L)$, we define a \emph{$(d,L)$-staircase} to be a $d$-staircase $\lambda$ such that $\lambda_1 - \lambda_d \leq L$. If $\lambda_i \geq 0$ for all $1\leq i \leq d$, we call it a \emph{$(d,L)$-partition}. For two $d$-staircases $\lambda$ and $\mu$, we say that $\lambda$ \emph{$(d,L)$-cointerlaces} $\mu$, denoted $\lambda \prec'_{(d,L)} \mu$, if $\lambda$ and $\mu$ are both $(d,L)$-staircases and if $\lambda \prec' \mu$. 

\begin{remark} \label{rem:interlacingcointerlacing}
    There is a subtle point that is worth emphasizing regarding a certain distinction between interlacing and cointerlacing. For cointerlacing, if $\lambda$ and $\mu$ are $d$-staircases
    \[ \lambda \prec'_{(d,L)} \mu \iff \Big(\lambda \text{ and } \mu \text{ are } (d,L)\text{-staircases, and } \lambda \prec' \mu\Big). \]
    This is true by definition. For interlacing, we have
    \[ \lambda \prec_{(d,L)} \mu \implies \Big(\lambda \text{ and } \mu \text{ are } (d,L)\text{-staircases, and } \lambda \prec \mu\Big), \]
    but the converse does \emph{not} hold. Indeed, $(d,L)$-interlacing requires $\mu_1 - \lambda_d \leq L$ which does not follow from the fact that $\lambda$ and $\mu$ are $(d,L)$-staircases and $\lambda \prec \mu$.
\end{remark}

The usefulness of the notion of $(d,L)$-staircases comes from the fact that these objects have a diagrammatic representation that is analogous to Young diagrams for ordinary partitions. This is described in the following proposition.

\begin{proposition}
    Let $(d,L)$ be a pair of positive integers. There are explicit bijective correspondences between the following collections of objects:
    \begin{enumerate}
        \item $(d,L)$-staircases,
        \item bi-infinite weakly decreasing sequences of integers $(a_i)_{i \in \Z}$ such that $a_{i+d} = a_i - L$ for all $i \in \Z$,
        \item bi-infinite lattice paths in $\Z^2$ that consist of unit steps up or left and that are invariant under translating the path by $(-L,d)$.
    \end{enumerate}
\end{proposition}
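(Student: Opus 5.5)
We will construct the maps $(1)\to(2)\to(3)$ explicitly and check that each one is invertible.

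\textbf{From (1) to (2).} Given a $(d,L)$-staircase $\lambda=(\lambda_1,\ldots,\lambda_d)$, define $a_i=\lambda_i$ for $1\le i\le d$. Extend to all $i\in\Z$ by $a_{i+kd}=\lambda_i-kL$ for $k\in\Z$, which is well defined by division with remainder. Periodicity $a_{i+d}=a_i-L$ then holds by construction. For monotonicity it suffices, by periodicity, to check $a_i\ge a_{i+1}$ for $1\le i\le d$. For $i<d$ this is $\lambda_i\ge\lambda_{i+1}$. For $i=d$ it is $\lambda_d\ge a_{d+1}=\lambda_1-L$, which is exactly the $(d,L)$-staircase condition. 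The inverse map restricts a sequence to the indices $1,\ldots,d$, and the same two inequalities show the restriction is a $(d,L)$-staircase. The two maps are clearly mutually inverse.

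\textbf{From (2) to (3).} We imitate the usual encoding of a partition by the boundary of its Young diagram, with row $i$ of the infinite cylindric picture ending at $x$-coordinate $a_i$. Let $P$ be the path built from the unit steps:
\begin{itemize}
\item for each $i\in\Z$, an up step from $(a_i,i-1)$ to $(a_i,i)$;
\item a left run along $y=i$ from $(a_i,i)$ to $(a_{i+1},i)$, consisting of $a_i-a_{i+1}\ge 0$ unit left steps.
\end{itemize}
Because the sequence is weakly decreasing, these pieces concatenate into a bi-infinite path of up and left unit steps. Because $a_{i+d}=a_i-L$, translating by $(-L,d)$ sends the $i$th vertical step to the $(i+d)$th, so $P$ is invariant under that translation. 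It is bi-infinite in both directions since $a_i\to\mp\infty$ as $i\to\pm\infty$.

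\textbf{Inverse of (2) to (3).} Given a translation-invariant up/left path, it crosses each horizontal strip $i-1\le y\le i$ in exactly one up step, because the $y$-coordinate is weakly monotone and changes only by up steps. Let $a_i$ be the $x$-coordinate of that step. The $a_i$ are weakly decreasing because between consecutive up steps there are only left steps. Invariance under $(-L,d)$ gives $a_{i+d}=a_i-L$. The two constructions are visibly inverse: the path is determined by its vertical steps together with the connecting horizontal runs.

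\textbf{Main obstacle.} Nothing here is deep. The only point that needs care is the boundary condition in the first bijection, namely that the wrap-around inequality $a_d\ge a_{d+1}$ is precisely $\lambda_1-\lambda_d\le L$.
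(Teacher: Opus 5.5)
Your proof is correct and is essentially the paper's argument: extend $\lambda$ via $a_{i+d}=a_i-L$, then build the path from one up step at $x=a_i$ followed by $a_i-a_{i+1}$ left steps for each $i$. You also write out the checks the paper calls straightforward, notably that the wrap-around inequality $a_d\ge a_{d+1}$ is exactly $\lambda_1-\lambda_d\le L$. One detail in your inverse map deserves to be made explicit: each strip $i-1\le y\le i$ really does contain an up step, because invariance under $(-L,d)$ with $d\ge 1$ forces the path to be unbounded both above and below.
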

\begin{proof}
If $\lambda$ is a $(d,L)$-staircase, the bi-infinite sequence is obtained by setting $a_i = \lambda_i$ for $1 \leq i \leq d$ and then extending to all $i\in\Z$ using the relation $a_{i+d} = a_i - L$. 

Given a bi-infinite sequence $(a_i)_{i \in \Z}$, the lattice path is obtained as follows. Start at $(0, a_1)$, then take one step up and $a_1-a_2$ steps left, then one step up and $a_2-a_3$ steps left, and so on. This determines the portion of the lattice path lying above the $x$-axis. The rest of the path is determined by translation invariance. See Figure \ref{fig:cylshape} for an illustration. It is straightforward to verify that these maps are bijections. 
\end{proof}

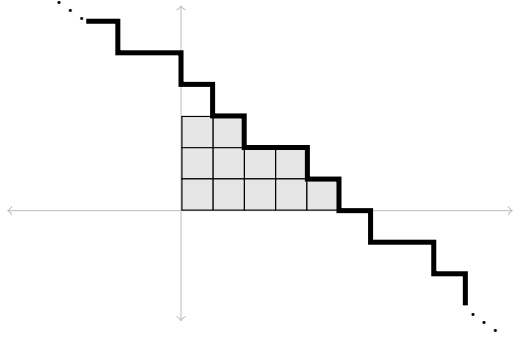
\begin{figure}
        \centering
        \ytableausetup{boxsize=0.40cm}
        \begin{tikzpicture}[scale=.7]
            \pgfmathsetmacro{\boxsize}{0.6}
            \draw[<->,lightgray] (-8*\boxsize,-1.5*\boxsize) -- ++(16*\boxsize,0);
            \draw[<->,lightgray] (-2.5*\boxsize,-5*\boxsize) -- ++(0,10*\boxsize);
            \node at (-6*\boxsize,5.1*\boxsize) {$\ddots$};
            \node at (7.1*\boxsize,-4.8*\boxsize) {$\ddots$};
            \node[inner sep=0pt] (yd) {\ydiagram[*(gray!20)]{2,4,5}};
            \draw[line width=1.9 pt] (6.5*\boxsize,-4.5*\boxsize) 
                -- ++(0,\boxsize) -- ++(-\boxsize,0) -- ++(0,\boxsize)
                -- ++(-\boxsize,0) -- ++(-\boxsize,0) -- ++(0,\boxsize) 
                -- ++(-\boxsize,0)
                -- ++(0,\boxsize) -- ++(-\boxsize,0) -- ++(0,\boxsize)
                -- ++(-\boxsize,0) -- ++(-\boxsize,0) -- ++(0,\boxsize) 
                -- ++(-\boxsize,0)
                -- ++(0,\boxsize) -- ++(-\boxsize,0) -- ++(0,\boxsize)
                -- ++(-\boxsize,0) -- ++(-\boxsize,0) -- ++(0,\boxsize) 
                -- ++(-\boxsize,0);
        \end{tikzpicture}
        \caption{The Young diagram of $\lambda=(5,4,2)$ and the corresponding lattice path when viewed as a $(3,4)$-staircase.}
        \label{fig:cylshape}
\end{figure}

The lattice path interpretation of $(d,L)$-staircases lets us define \emph{cylindric conjugation} for these objects in a natural way. We simply reflect the path across the line $y=x$. This induces an operation $\lambda \mapsto \tr_{(d,L)}(\lambda)$ that maps $(d,L)$-staircases to a $(L,d)$-staircases. We also obtain a direct analogue of Proposition~\ref{prop:conjugation} for $(d,L)$-staircases.

\begin{proposition} \label{prop:cyl_conjugation}
    For all $(d,L)$-staircases $\lambda$ and $\mu$, the following properties hold:
    \begin{enumerate}[(i)]
        \item $\tr_{(L,d)}(\tr_{(d,L)}(\lambda)) = \lambda$,
        \item $|\tr_{(d,L)}(\lambda)| = |\lambda|$,
        \item $\lambda \subseteq \mu \iff \tr_{(d,L)}(\lambda) \subseteq \tr_{(d,L)}(\mu)$,
        \item $\lambda \prec'_{(d,L)} \mu \iff \tr_{(d,L)}(\lambda) \prec_{(L,d)} \tr_{(d,L)}(\mu)$.
    \end{enumerate}
\end{proposition}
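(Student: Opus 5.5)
We will prove all four properties by first writing $\tr_{(d,L)}$ explicitly, so that the statements reduce to manipulations of integer sequences. Let $\lambda$ be a $(d,L)$-staircase with associated bi-infinite sequence $(a_i)_{i\in\Z}$, where $a_{i+d}=a_i-L$. Let $\gamma$ be its lattice path. Row $i$ of the periodic picture is the horizontal strip $[i-1,i]$, and $\gamma$ crosses it by a vertical step at $x=a_i$. Reflecting across $y=x$ gives a path whose vertical step in the strip $[j-1,j]$ sits at
\[
b_j=\#\{i\in\Z : a_i\ge j\}\quad(\text{suitably normalized}),
\]
that is, $b_j$ is the "column length" of column $j$ in the infinite periodic diagram. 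The periodicity $a_{i+d}=a_i-L$ becomes $b_{j+L}=b_j-d$. The weak monotonicity of $(b_j)$ is automatic. Then $\tr_{(d,L)}(\lambda)=(b_1,\dots,b_L)$, and it is an $(L,d)$-staircase.

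Here is how we expect this formula to be normalized. Because infinitely many $a_i$ are large, we count only relative to the region bounded by $\gamma$ and by the path of the empty staircase. Concretely, we set $b_j=\#\{i: a_i\ge j\}-\#\{i: e_i\ge j\}$, where $e_i$ is the sequence attached to the zero staircase $(0,\ldots,0)$, namely $e_{i}=-L\lceil i/d\rceil+L$ or a similar expression. Fixing this convention so that the empty staircase maps to the empty staircase is the first step.

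With the formula in hand, the four properties follow in order.

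\begin{enumerate}[(i)]
    \item This is immediate, since reflecting a path twice returns the original path, and the normalization is symmetric under the reflection.
    \item The quantity $|\lambda|$ equals the signed number of cells, in one fundamental domain of the translation by $(-L,d)$, lying between $\gamma$ and the reference path. Reflection preserves this cell count and carries a fundamental domain to a fundamental domain.
    \item The containment $\lambda\subseteq\mu$ holds iff $a_i\le a'_i$ for all $i$ (by periodicity), iff the region under $\gamma$ is contained in the region under $\gamma'$. Reflection preserves region containment.
    \item This is the main obstacle. The relation $\lambda\prec'\mu$ says $a'_i-a_i\in\{0,1\}$ for all $i$, meaning the skew region between the two paths has at most one cell in each row. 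After reflection, this says there is at most one cell in each column of the reflected skew region. For ordinary partitions, "at most one cell per column" is equivalent to the interlacing $b_1'\ge b_1\ge b_2'\ge\cdots$. We will check this equivalence directly on the bi-infinite sequences: $b'_j\ge b_j\ge b'_{j+1}$ for all $j\in\Z$. Written on the fundamental domain $j=1,\dots,L$, these inequalities give $\beta_1\ge\alpha_1\ge\cdots\ge\beta_L\ge\alpha_L$. The wraparound inequality $\alpha_L\ge b'_{L+1}=\beta_1-d$ is exactly the cylindric condition in $\prec_{(L,d)}$.
\end{enumerate}

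The direction of (iv) therefore requires both images to be $(L,d)$-staircases, which is automatic from the construction, and it produces precisely the extra inequality $\beta_1-\alpha_L\le d$ that Remark~\ref{rem:interlacingcointerlacing} singles out. We expect the care to be needed in the indexing of this wraparound term and in the normalization of $b_j$, not in any conceptual step.
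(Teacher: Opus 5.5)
Your proof is correct. For (i) and (iv) it matches the paper. The paper also proves (iv) from the geometric characterization of $(L,d)$-interlacing as ``at most one cell per column of the periodic region between the paths'' (Figure~\ref{fig:cylinterlace}). The wraparound inequality $\alpha_L\ge\beta_1-d$ comes from periodicity exactly as you describe.

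The difference is in (ii) and (iii). The paper reduces to the case $\lambda_d=0$, where $\tr_{(d,L)}(\lambda)=\lambda'$ and classical conjugation facts apply. It does this by tracking how translating the path changes both sides: a unit leftward shift subtracts $1$ from each part of $\lambda$ and sends $(b_1,\dots,b_L)$ to $(b_2,\dots,b_L,b_1-d)$, so both sizes drop by $d$. You instead argue intrinsically on the cylinder, using containment of the periodic lower regions for (iii) and a signed area for (ii). This avoids any case reduction. It is also more direct for (iii), where a translation argument would have to handle two staircases at once.

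Be aware that in (ii) the reflection alone does not finish the job. It carries the strip $0\le y\le d$ to a vertical strip of width $d$. But $|\tr_{(d,L)}(\lambda)|$ is the count in a horizontal strip of height $L$ of the reflected picture. Equivalently, in the original picture you must compare rows $1,\dots,d$ with columns $1,\dots,L$. So the real content of (ii) is that the signed number of cells between two $(-L,d)$-periodic paths does not depend on the choice of fundamental domain. This follows from a standard cut-and-translate argument, applied separately to the positive and negative parts, but you should state it explicitly rather than leave it inside ``reflection carries a fundamental domain to a fundamental domain.''

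Finally, drop the difference-of-counts normalization. The reflection directly gives $b_j=\max\{i\in\Z : a_i\ge j\}$, which satisfies $b_{j+L}=b_j-d$. Subtracting the counts for the zero staircase instead produces an $L$-periodic sequence. That sequence agrees with $b_j$ only for $1\le j\le L$, and it would break the wraparound identity $b'_{L+1}=\beta_1-d$ that you use in (iv).
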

\begin{proof}
    Property (i) is true by definition. To prove property (ii), first observe that it is true when $\lambda_d = 0$ since $\tr_{(d,L)}(\lambda)=\lambda'$ in this case. The general case can be reduced to the $\lambda_d=0$ case by deriving relations for how $|\lambda|$ and $\tr_{(d,L)}(\lambda)$ are affected by translations of the lattice path. The same strategy works to prove (iii). The proof of (iv) is quite similar to the non-cylindric case: there is a natural geometric characterization of $(L,d)$-interlacing as shown in Figure~\ref{fig:cylinterlace}. The result follows.
\end{proof}

\begin{figure}
    \ytableausetup{boxsize=normal}
    \begin{subfigure}{.45\textwidth}
        \centering
        \begin{tikzpicture}
            \node {\ydiagram{0,0,0,3+2,3+4,3+5}*[*(gray!40)]{2+2,4+1,5+1,5+2,7+1,8+1}};
            \pgfmathsetmacro{\boxsize}{0.6}
            \draw[line width=1.9 pt,blue] (3.5*\boxsize,-3*\boxsize) 
                -- ++(0,\boxsize) -- ++(-\boxsize,0) -- ++(0,\boxsize)
                -- ++(-\boxsize,0) -- ++(-\boxsize,0) -- ++(0,\boxsize) 
                -- ++(0,\boxsize) -- ++(-\boxsize,0) -- ++(0,\boxsize)
                -- ++(-\boxsize,0) -- ++(-\boxsize,0) -- ++(0,\boxsize);
            \draw[line width=1.9 pt,red] (4.5*\boxsize,-3*\boxsize) 
                -- ++(0,\boxsize) -- ++(-\boxsize,0) -- ++(0,\boxsize)
                -- ++(-\boxsize,0) -- ++(0,\boxsize) -- ++(-\boxsize,0)
                -- ++(0,\boxsize) -- ++(-\boxsize,0) -- ++(0,\boxsize)
                -- ++(-\boxsize,0) -- ++(0,\boxsize) -- ++(-\boxsize,0);
        \end{tikzpicture}
        \caption{}
    \end{subfigure}
    \begin{subfigure}{.45\textwidth}
        \centering
        \begin{tikzpicture}
            \node {\ydiagram{0,0,0,4+2,4+4,4+5}*[*(gray!40)]{2+2,4+1,5+1,6+2,8+1,9+1}};
            \pgfmathsetmacro{\boxsize}{0.6}
            \draw[line width=1.9 pt,blue] (4*\boxsize,-3*\boxsize) 
                -- ++(0,\boxsize) -- ++(-\boxsize,0) -- ++(0,\boxsize)
                -- ++(-\boxsize,0) -- ++(-\boxsize,0) -- ++(0,\boxsize)
                -- ++(-\boxsize,0) 
                -- ++(0,\boxsize) -- ++(-\boxsize,0) -- ++(0,\boxsize)
                -- ++(-\boxsize,0) -- ++(-\boxsize,0) -- ++(0,\boxsize)
                -- ++(-\boxsize,0);
            \draw[line width=1.9 pt,red] (5*\boxsize,-3*\boxsize) 
                -- ++(0,\boxsize) -- ++(-\boxsize,0) -- ++(0,\boxsize)
                -- ++(-\boxsize,0) -- ++(0,\boxsize) -- ++(-\boxsize,0)
                -- ++(-\boxsize,0)
                -- ++(0,\boxsize) -- ++(-\boxsize,0) -- ++(0,\boxsize)
                -- ++(-\boxsize,0) -- ++(0,\boxsize) -- ++(-\boxsize,0)
                -- ++(-\boxsize,0);
        \end{tikzpicture}
        \caption{}
    \end{subfigure}
    \caption{\textsc{(a)} The lattice paths corresponding to the $(3,3)$-partitions $\lambda=(5,4,2)$ and $\mu=(6,5,4)$ are shown in blue and red. These partitions do not $(3,3)$-interlace because the region between them contains multiple cells in the same column. \\
    \textsc{(b)} The lattice paths of $\lambda$ and $\mu$ are shown as $(3,4)$-partitions. The region between them has at most one cell in each column, so $\lambda \prec_{(3,4)} \mu$.}
    \label{fig:cylinterlace}
\end{figure}

Properties (i)--(iii) of this proposition be found in work of Goodman and Wenzl \cite[Lem. 1.5]{goodman1990w}, although their definitions are somewhat different than ours. Their version is also only stated for $(d,L)$-partitions. Property (iv) of this proposition is important because it gives the desired duality between interlacing and cointerlacing for $(d,L)$-staircases.

For sequences of $(d,L)$-staircases, we define a cylindric conjugation map $T \mapsto \tr_{(d,L)}(T)$ just like in the non-cylindric case (i.e. apply $\tr_{(d,L)}$ to each term in the sequence). A \emph{skew $(d,L)$-cylindric row-strict Young tableau of shape $\lambda/\mu$} is a sequence of $d$-staircases $\mu=\lambda^{(0)} \prec'_{(d,L)} \lambda^{(1)} \prec'_{(d,L)} \cdots \prec'_{(d,L)} \lambda^{(k)} = \lambda$. When $\mu=\emptyset$, we remove the word ``skew'' and simply say that these tableaux have shape $\lambda$. These definitions are exact analogues of the semistandard case in the sense that $(d,L)$-interlacing has been replaced with $(d,L)$-cointerlacing. We define the \emph{weight} of such these tableaux in the usual way. The following proposition gives the desired duality between row-strict tableaux and semistandard tableaux in the cylindric setting.

\begin{proposition}[{cf. \cite[Prop. 2.4]{huh2025kkc}}] \label{prop:cyl_conjugation_tableaux}
    The conjugation operation $T\mapsto \tr_{(d,L)}(T)$ is a bijection between sequences of $(d,L)$-staircases and sequences of $(L,d)$-staircases. It restricts to a weight-preserving bijection between skew $(d,L)$-cylindric semistandard Young tableaux and skew $(L,d)$-cylindric row-strict Young tableaux. It further restricts to a bijection between skew $(d,L)$-cylindric standard Young tableaux and skew $(L,d)$-cylindric standard Young tableaux. The same statements also hold in the non-skew setting.
\end{proposition}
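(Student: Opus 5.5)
The plan is to work term by term and apply Proposition~\ref{prop:cyl_conjugation}. The first claim is that $T\mapsto \tr_{(d,L)}(T)$ is a bijection from sequences of $(d,L)$-staircases to sequences of $(L,d)$-staircases. This is immediate from property (i) of Proposition~\ref{prop:cyl_conjugation}, applied in both directions: the inverse map is $\tr_{(L,d)}$ applied termwise.

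For the restriction to skew semistandard and row-strict tableaux, I would first note that every term of a skew $(d,L)$-cylindric semistandard tableau is a $(d,L)$-staircase. By Remark~\ref{rem:interlacingcointerlacing}, $\lambda^{(i-1)}\prec_{(d,L)}\lambda^{(i)}$ forces both partitions to be $(d,L)$-staircases, so the sequence lies in the domain of the map. Similarly, a skew $(L,d)$-cylindric row-strict tableau consists of $(L,d)$-staircases by definition.

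The main step is then the equivalence
\[
\alpha\prec_{(d,L)}\beta \iff \tr_{(d,L)}(\alpha)\prec'_{(L,d)}\tr_{(d,L)}(\beta).
\]
This is property (iv) of Proposition~\ref{prop:cyl_conjugation} with the roles of $(d,L)$ and $(L,d)$ swapped. Concretely, apply (iv) for the pair $(L,d)$ to $\tr_{(d,L)}(\alpha)$ and $\tr_{(d,L)}(\beta)$, and then use (i) to identify $\tr_{(L,d)}\tr_{(d,L)}$ with the identity.

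Applying this equivalence to each consecutive pair gives the bijection between the two kinds of tableaux. The shapes transform as $\lambda/\mu\mapsto \tr(\lambda)/\tr(\mu)$. Weight preservation follows from property (ii), since each weight entry is a difference of sizes of consecutive terms and sizes are unchanged.

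For the standard case, I would use that when the weight is all ones the notions of semistandard and row-strict coincide. The same holds for their $(L,d)$-cylindric versions: with $|\mu|-|\lambda|=1$, both interlacing and cointerlacing reduce to adding a single box, and the cylindric condition $\mu_1-\lambda_L\le d$ must be compared with the staircase condition. This comparison is where I expect the main care to be needed. I would handle it by checking directly that if $\mu$ is obtained from $\lambda$ by adding one box and both are $(L,d)$-staircases, then $\mu_1-\lambda_L\le d$. Either the box is added in row 1, so $\mu_1-\lambda_L=\mu_1-\mu_L\le d$ when $L>1$. Or it is not, so $\mu_1-\lambda_L\le \lambda_1-\lambda_L\le d$. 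The degenerate case $L=1$ is treated separately.

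Finally, for the non-skew statement I would check that $\tr_{(d,L)}(\emptyset)=\emptyset$, which holds because the corresponding lattice path is symmetric under reflection in $y=x$. I would also check that nonnegativity of the terms is preserved. This follows from (iii): starting from $\emptyset$ with all terms $\supseteq\emptyset$, the conjugated terms are $\supseteq\tr(\emptyset)=\emptyset$.
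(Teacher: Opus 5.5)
Your proposal is correct and is essentially the paper's proof: you apply Proposition~\ref{prop:cyl_conjugation} termwise, using (iv) for the swapped pair $(L,d)$ together with (i), and Remark~\ref{rem:interlacingcointerlacing} guarantees every term is a $(d,L)$-staircase; your one-box check that $(L,d)$-cointerlacing forces $(L,d)$-interlacing in the standard case usefully spells out a point the paper leaves implicit. One small fix: for $d\neq L$ the path of $\emptyset$ is not itself symmetric under reflection in $y=x$; rather, its reflection (vertical runs of length $d$ and horizontal runs of length $L$ through the origin become vertical runs of length $L$ and horizontal runs of length $d$) is the path of the empty $(L,d)$-staircase, and that is what gives $\tr_{(d,L)}(\emptyset)=\emptyset$.
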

\begin{proof}
    The statements here are consequences of Proposition \ref{prop:cyl_conjugation}. Note that if $T$ is a $(d,L)$-cylindric semistandard Young tableau, then the constituent $d$-staircases of $T$ are all $(d,L)$-staircases (see Remark \ref{rem:interlacingcointerlacing}). The same is true for row-strict tableaux. Hence, the conjugation operation is well-defined on these objects and has the desired properties.
\end{proof}

\begin{remark}
    By Remark \ref{rem:interlacingcointerlacing}, there is an appealingly simple analogue of ``minimum cylindric width'' in the row-strict setting. Indeed, given a $d$-row-strict tableau $T$, we can define its minimum cylindric width to be the minimum $L$ such that all of the constituent partitions of $T$ are $(d,L)$-partitions. Unfortunately, if $T$ is both $d$-semistandard and $d$-row-strict then the two notions of minimum cylindric width do not necessarily coincide (consider $T=(\emptyset,(1,0), (2,1))$ as a sequence of $2$-staircases). They do coincide if $T$ is standard, however.
\end{remark}

\subsection{Duality and RSK-type correspondences}
By combining conjugation with the various correspondences discussed in previous sections, it is possible to obtain further bijections involving row-strict tableaux. We will give a representative example of this type of result using the oscillating skew $d$-RSK correspondence.

To state this result we must define oscillating tableaux in the row-strict setting. It suffices to say that we can take all of the definitions from the semistandard oscillating setting and carry them over to the row-strict oscillating setting by replacing interlacing with cointerlacing and $(d,L)$-interlacing with $(d,L)$-cointerlacing. With these definitions in place, we can state the following result.

\begin{theorem} \label{thm:skewdRSKrowstrict}
    Let $(d,L)$ be a pair of positive integers. Let $w$ and $v$ be type sequences that have the same number of occurrences of `$+$' and the same number of occurrences of `$-$'. There is a shape and weight preserving bijection between skew $(d,L)$-cylindric row-strict $w$-oscillating tableaux and skew $(d,L)$-cylindric row-strict $v$-oscillating tableaux. This correspondence commutes with reversal of the tableaux.
\end{theorem}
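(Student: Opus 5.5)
The plan is to conjugate, apply the oscillating skew correspondence on the conjugate side, and conjugate back. Take a skew $(d,L)$-cylindric row-strict $w$-oscillating tableau $T=(\lambda^{(0)},\ldots,\lambda^{(k)})$. Every constituent is a $(d,L)$-staircase, since $(d,L)$-cointerlacing requires this by definition (Remark~\ref{rem:interlacingcointerlacing}). Hence $\tr_{(d,L)}(T)$ is defined.

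By Proposition~\ref{prop:cyl_conjugation}(iv), each relation $\lambda^{(i-1)}\prec'_{(d,L)}\lambda^{(i)}$ (or its reverse) becomes $\tr_{(d,L)}(\lambda^{(i-1)})\prec_{(L,d)}\tr_{(d,L)}(\lambda^{(i)})$ (or its reverse). So $\tr_{(d,L)}(T)$ is a skew $(L,d)$-cylindric semistandard $w$-oscillating tableau. This is the oscillating analogue of Proposition~\ref{prop:cyl_conjugation_tableaux}, and the proof is the same. Conversely, every skew $(L,d)$-cylindric semistandard $w$-oscillating tableau consists of $(L,d)$-staircases, so $\tr_{(L,d)}$ maps it back. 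By Proposition~\ref{prop:cyl_conjugation}(i) the two maps are mutually inverse. By (ii), sizes are preserved, and therefore $\wt^\pm$ is preserved.

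Next, apply the oscillating skew $L$-RSK correspondence from Theorem~\ref{thm:skewdRSKgrowth}, with $L$ in the role of $d$. This gives a bijection between skew $L$-semistandard $w$-oscillating tableaux and skew $L$-semistandard $v$-oscillating tableaux. By Theorem~\ref{thm:skewdRSKproperties}, it preserves shape (both inner and outer), weight, and minimum cylindric width $\MCW_L$. Having $\MCW_L\le d$ is exactly the condition of being $(L,d)$-cylindric. So the correspondence restricts to a bijection between skew $(L,d)$-cylindric semistandard $w$- and $v$-oscillating tableaux. Conjugating back with $\tr_{(L,d)}$ yields the desired bijection between skew $(d,L)$-cylindric row-strict $w$- and $v$-oscillating tableaux.

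Weight is preserved at each of the three stages. Shape is preserved because the middle step preserves the first and last staircases, and conjugation is injective on $(d,L)$-staircases. Compatibility with reversal holds because conjugation acts termwise, so it commutes with reversing the sequence, and Theorem~\ref{thm:skewdRSKproperties}(iv) handles the middle step.

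The main point to check is that the $\MCW$ restriction interacts correctly with conjugation. The semistandard tableau must be $(L,d)$-cylindric in the interlacing sense, not merely a sequence of $(L,d)$-staircases. Otherwise $\tr_{(L,d)}$ of the output would not be defined or would not cointerlace. Proposition~\ref{prop:cyl_conjugation}(iv) guarantees the input is $(L,d)$-cylindric in the interlacing sense, and property (iii) of Theorem~\ref{thm:skewdRSKproperties} guarantees the output is too. The remaining check is that the type sequences transform correctly, with $+$ and $-$ unchanged, since conjugation preserves the direction of each relation.
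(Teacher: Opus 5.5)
Your proposal is correct and follows the paper's own proof. Both restrict the oscillating skew $L$-RSK correspondence to $(L,d)$-cylindric semistandard tableaux, using the preservation of $\MCW_L$ from Theorem~\ref{thm:skewdRSKproperties}, and then compose with cylindric conjugation via Proposition~\ref{prop:cyl_conjugation}. You spell out the verifications the paper leaves implicit: weight, shape, reversal, and the exchange of $(d,L)$-cointerlacing with $(L,d)$-interlacing. Shape preservation follows from the involution property (i) of Proposition~\ref{prop:cyl_conjugation}, which you already invoke; injectivity alone would not be the right reason.
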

\begin{proof}
    By Theorem \ref{thm:skewdRSKproperties}, the oscillating skew $L$-RSK correspondence restricts to a shape and weight preserving bijection between skew $(L,d)$-cylindric semistandard $w$-oscillating tableaux and skew $(L,d)$-cylindric semistandard $v$-oscillating tableaux. Composing this with cylindric conjugation gives the desired bijection involving row-strict tableaux.
\end{proof}

It should also be noted that taking $L\to\infty$ in Theorem \ref{thm:skewdRSKrowstrict} leads to a nontrivial enumerative consequence involving skew $d$-row-strict $w$-oscillating tableaux. That is, the number of such tableaux of a given shape and weight is independent of the choice of type sequence $w$ (assuming we fix the number of occurrences of `$+$' and `$-$'). This latter statement does not involve cylindric objects, but we have obtained it as a consequence of cylindric correspondences. 

\section{Bijections between fillings of Young diagrams} \label{sec:fillings_cors}
We now discuss some additional bijections which do not involve tableaux. The following result is obtained by combining the oscillating RSK and $d$-RSK correspondences.

\begin{theorem} \label{thm:fillings_genshape}
    Let $F$ be a Young diagram. There is an explicit bijective correspondence between 
    \begin{enumerate}
        \item fillings of $F$ that avoid the pattern $d\cdots 1 (d+1)$, and
        \item fillings of $F$ such that for every lattice point $p$ of $F$ the rectangular region $\Rect_p$ does not contain a se-chain of length $d+1$.
    \end{enumerate}
    This correspondence preserves all row sums and column sums, and it commutes with reflection of these fillings across the $y=x$ line.
\end{theorem}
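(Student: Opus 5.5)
The plan is to compose two growth-diagram bijections that pass through the same set of oscillating tableaux. Let $w$ be the type sequence of the outer boundary of $F$. By Theorem~\ref{thm:dRSKgrowth}, fillings of $F$ avoiding $d\cdots 1(d+1)$ are in bijection with $d$-semistandard $w$-oscillating tableaux. By Theorem~\ref{thm:RSKgrowth}, arbitrary fillings of $F$ are in bijection with semistandard $w$-oscillating tableaux. The bijection in the statement will be: $d$-RSK forward, then RSK backward. For this to make sense, the image of the $d$-semistandard $w$-oscillating tableaux under the inverse oscillating RSK correspondence must be exactly the fillings described in (2).

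The key step is to identify that image. Fix an RSK growth diagram on $F$.

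\begin{itemize}
\item By Theorem~\ref{thm:RSKchains}, the partition $\lambda$ at a lattice point $p$ has $\ell(\lambda)\le d$ if and only if $\Rect_p$ contains no se-chain of length $d+1$.
\item So every partition in the diagram is a $d$-partition exactly when the filling satisfies condition (2).
\item Every partition in the diagram is contained in some partition on the outer boundary. Each lattice point $p$ of $F$ lies weakly below and to the left of some boundary point, so Proposition~\ref{prop:dRSKadjacent}(i) applies.
\item Hence all partitions are $d$-partitions if and only if the boundary tableau is $d$-semistandard.
\end{itemize}

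Therefore the oscillating RSK correspondence restricts to a bijection between the fillings in (2) and $d$-semistandard $w$-oscillating tableaux. Composing with the oscillating $d$-RSK correspondence gives the bijection.

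Preservation of row and column sums follows from Proposition~\ref{prop:weightreflection}(1). For each of the two correspondences, the row sums equal $\wt^+(T)$ and the column sums equal $\wt^-(T)$ of the common tableau $T$. Commuting with reflection follows from Proposition~\ref{prop:weightreflection}(2). Both correspondences send reflection of fillings to reversal of tableaux, and reversal preserves $d$-semistandardness. Both sets in the statement are also stable under reflection across $y=x$: reflection sends se-chains to se-chains and sends an instance of the pattern $d\cdots1(d+1)$ to an instance of the same pattern, by symmetry of the definitions.

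The main point needing care is the boundary-containment step: that every lattice point of a Young diagram is dominated by some point on the outer boundary. This is immediate, since one can move right or up from any lattice point until reaching the outer boundary. Beyond that, the argument is a direct assembly of earlier results.
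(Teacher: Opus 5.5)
Your proposal is correct and follows the paper's proof: it composes the oscillating $d$-RSK correspondence with the inverse oscillating RSK correspondence through $d$-semistandard $w$-oscillating tableaux, and the auxiliary properties come from Proposition~\ref{prop:weightreflection}. You also spell out a step the paper leaves implicit, namely that the fillings in (2) are exactly those whose RSK boundary tableau is $d$-semistandard, which you get from Theorem~\ref{thm:RSKchains} and the containment in Proposition~\ref{prop:dRSKadjacent}(i); that argument is sound.
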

\begin{proof}
    Let $w$ denote the type sequence that encodes the outer boundary of $F$. Both sets of fillings are in bijection with $d$-semistandard $w$-oscillating tableaux by the oscillating $d$-RSK and RSK correspondences respectively. The auxiliary properties follow from Proposition~\ref{prop:weightreflection}.
\end{proof}

Both sets of fillings in this theorem are infinite, so the statement is only enumeratively meaningful when a bound is imposed on the row and column sums. In the special case where the row and column sums are all equal to 1, the existence of a bijection between such fillings was proved by Backelin, West, and Xin \cite[Prop. 3.1]{backelin2007}. Our bijection coincides with theirs in this special case, although this is not at all obvious from the constructions. This follows from work of Bloom and Saracino \cite{bloom2012s}. We will say more about this in Section \ref{sec:connections}.

The fact that the Backelin--West--Xin bijection commutes with reflection across the $y=x$ line was first proved by Bousquet-Mélou and Steingrímsson \cite{bousquet2005}. Bloom and Saracino gave a different proof of this fact by showing that the Backelin--West--Xin bijection can be reformulated in terms of growth diagrams and then using the natural symmetry of these diagrams. Our more general bijection commutes with reflection across the $y=x$ line for the same reason.

For completeness, we also state the special case of Theorem \ref{thm:fillings_genshape} where $F$ is a rectangular Young diagram. In this case the condition on se-chains becomes quite simple.

\begin{corollary} \label{cor:fillings_square}
    Let $F$ be a rectangular Young diagram. There is an explicit bijective correspondence between 
    \begin{enumerate}
        \item fillings of $F$ that avoid the pattern $d\cdots 1 (d+1)$, and
        \item fillings of $F$ that do not contain a se-chain of length $d+1$.
    \end{enumerate}
    This correspondence preserves all row sums and column sums, and it commutes with reflection of these fillings across the $y=x$ line.
\end{corollary}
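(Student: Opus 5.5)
This is the special case of Theorem~\ref{thm:fillings_genshape} in which $F$ is rectangular, so the plan is to invoke that theorem and show that condition (2) there becomes condition (2) here. Write $F=\Rect_q$, where $q$ is the upper-right corner.

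The key step is to show that, for a rectangle, these two conditions are equivalent:
\begin{itemize}
\item[(A)] for every lattice point $p$ of $F$, the region $\Rect_p$ contains no se-chain of length $d+1$;
\item[(B)] $F$ itself contains no se-chain of length $d+1$.
\end{itemize}
The direction (A)$\Rightarrow$(B) follows by taking $p=q$, since $\Rect_q=F$. The direction (B)$\Rightarrow$(A) holds because $\Rect_p\subseteq F$ for every lattice point $p$ of $F$, and any se-chain in $\Rect_p$ is also a se-chain in $F$. This monotonicity is really the only content. For a general Young diagram $F$ the two conditions differ, because a se-chain in $F$ need not fit inside any single $\Rect_p$. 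In a rectangle, however, the whole diagram is one such $\Rect_p$.

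Once the two sets of fillings are identified, the bijection of Theorem~\ref{thm:fillings_genshape} restricts to the desired bijection. Concretely, it is the composite of the $d$-RSK correspondence with the inverse RSK correspondence, both taken on $\Rect_q$. Preservation of row and column sums and commutation with reflection are inherited directly from Theorem~\ref{thm:fillings_genshape}, that is, from Proposition~\ref{prop:weightreflection}. The reflected diagram is again a rectangle, so the reflection statement stays inside the class of fillings being considered.

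None of these steps presents a genuine obstacle. The only point needing care is checking that Theorem~\ref{thm:fillings_genshape} applies with no additional hypotheses on $F$, and it does.
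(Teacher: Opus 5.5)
Your proposal is correct and matches the paper's approach: the corollary is presented as the rectangular special case of Theorem~\ref{thm:fillings_genshape}. The only content is the observation you make, that when $F=\Rect_q$ the condition on all $\Rect_p$ reduces to the condition on $F$ itself.
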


In the next section we will give a refinement of this bijection in the special case where the fillings have row and column sums equal to 1.

\section{Pattern-avoiding permutations} \label{sec:permutations}
Throughout this section we use the following notation: for positive integers $n, d, L$, let
\begin{align*}
S_n^{(d,L)} &= \{\pi \in S_n : \pi \text{ avoids } d\cdots 1 (d+1) \text{ and } 1\cdots (L+1)\},\\
S_n^{(d,\infty)} &= \{\pi \in S_n : \pi \text{ avoids } d\cdots 1 (d+1)\},\\
S_n^{(\infty,L)} &= \{\pi \in S_n : \pi \text{ avoids } 1\cdots (L+1)\}, \\
S_n^{*(d)} &= \{\pi \in S_n : \pi \text{ avoids } (d+1) \cdots 1\}.\\
\end{align*}
Let $I_n^{(d,L)}$, $I_n^{(d,\infty)}$, $I_n^{(\infty,L)}$, and $I_n^{*(d)}$ denote the subsets of these sets consisting of involutions.

\subsection{Wilf-equivalences}
We start by stating a known result, which is a special case of the work of Backelin et al. and Bousquet-Mélou and Steingrímsson discussed in the previous section (plus an additional idea of Krattenthaler). We will provide a proof using the machinery we have set up.

\begin{corollary}[{see \cite[Prop. 3.1]{backelin2007},\cite[Thm. 4]{bousquet2005}, \cite[Thm. 14]{krattenthaler2006}}] \label{cor:wilf_infinite}
    For all $n, d\in\N$, we have 
    \begin{align*}
        |S_n^{(d,\infty)}| &= |S_n^{*(d)}| = |S_n^{(\infty,d)}|, \text{ and } \\
        |I_n^{(d,\infty)}| &= |I_n^{*(d)}| = |I_n^{(\infty,d)}|.
    \end{align*}
\end{corollary}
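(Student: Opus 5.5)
The plan is to pass everything through tableaux, using the classical RS correspondence and the $d$-RS correspondence, that is, the $d$-RSK correspondence restricted to permutation matrices.

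\textbf{Step 1: $S_n^{(d,\infty)}$ versus $S_n^{*(d)}$.} Restrict Corollary~\ref{cor:fillings_square} to the square $n\times n$ diagram and to fillings with all row and column sums equal to $1$. These are permutation matrices, and the correspondence preserves row and column sums, so it restricts to a bijection. On one side are permutations avoiding $d\cdots 1(d+1)$. On the other are permutations with no se-chain of length $d+1$, i.e.\ no decreasing subsequence of length $d+1$, i.e.\ avoiding $(d+1)\cdots 1$. One should double-check that a se-chain in the filling is a decreasing subsequence under the paper's vertical-reflection convention; this holds, since se-chains go strictly down and right.

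Because the correspondence commutes with reflection across $y=x$, it sends symmetric permutation matrices to symmetric ones, so it restricts to a bijection between $I_n^{(d,\infty)}$ and $I_n^{*(d)}$. Equivalently, both sets are in bijection with pairs $(P,Q)$ of $d$-standard Young tableaux of a common shape, and involutions correspond to pairs with $P=Q$.

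\textbf{Step 2: $S_n^{*(d)}$ versus $S_n^{(\infty,d)}$.} Reversing a permutation (reflecting the matrix horizontally) exchanges decreasing and increasing subsequences, which gives $|S_n^{*(d)}|=|S_n^{(\infty,d)}|$ immediately. Reversal does not preserve involutions, so for the involution statement I would instead use the classical RS correspondence. By Theorem~\ref{thm:RSKchains}, permutations with longest decreasing subsequence at most $d$ correspond to pairs of standard tableaux with at most $d$ rows. Likewise, permutations with longest increasing subsequence at most $d$ correspond to pairs with first row at most $d$, i.e.\ at most $d$ columns. Transposing both tableaux, which is conjugation and is an involution on standard Young tableaux, gives a bijection between these two sets of pairs. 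It preserves the diagonal $P=Q$, and under RS the diagonal corresponds to involutions. Hence $|I_n^{*(d)}|=|I_n^{(\infty,d)}|$; this is Krattenthaler's idea.

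\textbf{Main obstacle.} The only step that is not routine is Step 2 for involutions, since the naive reversal fails there. The tableau-transposition argument handles it, provided one checks that Theorem~\ref{thm:RSKchains}, applied at the top-right corner of the square, identifies $\lambda_1$ with the longest increasing subsequence and $\ell(\lambda)$ with the longest decreasing subsequence.
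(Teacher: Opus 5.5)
Your proposal is correct and follows essentially the same route as the paper. In both, the first equalities come from Corollary~\ref{cor:fillings_square} restricted to permutation matrices, with commutation with reflection handling involutions. The second equalities come from reversal for permutations and from Krattenthaler's RS-plus-conjugation argument for involutions. Your extra checks (that se-chains are decreasing subsequences, and applying Theorem~\ref{thm:RSKchains} at the top-right corner) just spell out details the paper leaves implicit.
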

\begin{proof}
    The first equality on each line is a special case of Corollary \ref{cor:fillings_square} when the row and column sums of fillings are equal to 1. These equalities are due to Backelin et al. and Bousquet-Mélou and Steingrímsson, respectively. 

    The second equality on the first line is trivial: given a permutation without an increasing subsequence of length $d+1$, reversing it gives a permutation without a decreasing subsequence of length $d+1$ and vice versa. This proof breaks down for involutions, however. There is a different approach which does generalize. Given a permutation $\pi\in S_n^{*(d)}$, apply the RS correspondence to get a pair of standard Young tableaux $(P,Q)$. Since $\pi$ avoids $(d+1)\cdots 1$, the shape of $P$ and $Q$ has at most $d$ rows. Conjugating these tableaux gives a pair whose shape has at most $d$ columns. This pair then corresponds to a permutation in $S_n^{(\infty,d)}$ by the RS correspondence. All of these steps are reversible so we obtain a bijection between $S_n^{*(d)}$ and $S_n^{(\infty,d)}$. For involutions, the same argument goes through. The pairs of tableaux are of the form $(P,P)$ in this particular case. This idea of combining RS with conjugation was used by Krattenthaler \cite{krattenthaler2006} to prove several results along these lines.
\end{proof}

In the language of pattern-avoiding permutations (see \cite{kitaev2011}), the first line of equalities in Corollary~\ref{cor:wilf_infinite} says that the patterns $d\cdots 1 (d+1)$, $(d+1)\cdots 1$, and $1\cdots (d+1)$ are all \emph{Wilf-equivalent}. The bijections that we used in the proof can also clearly be generalized to handle fillings of non-square Young diagrams with row and column sums equal to 1. The notion of Wilf-equivalence in this more general setting is called \emph{shape-Wilf-equivalence} (see \cite{backelin2007} for the precise definition).

We now state another infinite family of Wilf-equivalences. This can be viewed as an $L<\infty$ refinement of the previous corollary.

\begin{corollary} \label{cor:wilf}
    For all $n, d, L\in \N$, we have $|S_n^{(d,L)}| = |S_n^{(L,d)}|$ and $|I_n^{(d,L)}| = |I_n^{(L,d)}|$.
\end{corollary}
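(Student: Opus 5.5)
The plan is to mimic the proof of the second equality in Corollary~\ref{cor:wilf_infinite}. There, RS was combined with ordinary conjugation; here we combine the cylindric RS correspondence (Theorem~\ref{thm:cRS}) with cylindric conjugation (Proposition~\ref{prop:cyl_conjugation_tableaux}).

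\emph{Step 1.} Let $\pi \in S_n^{(d,L)}$. By Theorem~\ref{thm:cRS}, $\pi$ corresponds to a pair $(P,Q)$ of $(d,L)$-cylindric standard Young tableaux of size $n$ with a common shape $\lambda$. Moreover, $\pi$ is an involution if and only if $P=Q$, since taking inverses swaps $P$ and $Q$.

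\emph{Step 2.} Apply $\tr_{(d,L)}$ termwise to $P$ and to $Q$. By Proposition~\ref{prop:cyl_conjugation_tableaux}, cylindric conjugation is a bijection from $(d,L)$-cylindric standard Young tableaux to $(L,d)$-cylindric standard Young tableaux, so $\tr_{(d,L)}(P)$ and $\tr_{(d,L)}(Q)$ are $(L,d)$-cylindric standard. Their common final term is $\tr_{(d,L)}(\lambda)$, so they again have a common shape. Their size is $n$, by Proposition~\ref{prop:cyl_conjugation}(ii) or by weight preservation.

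The point needing care is the non-skew setting. The conjugated sequence must start at $\emptyset$ and consist of genuine $L$-partitions, i.e.\ have nonnegative parts. The first term is fine, since $\tr_{(d,L)}(\emptyset)=\emptyset$: the lattice path of $\emptyset$ is the staircase path through the origin, and its reflection is the path of the empty $(L,d)$-staircase. For the later terms, note that $\emptyset \subseteq \lambda^{(i)}$, so Proposition~\ref{prop:cyl_conjugation}(iii) gives $\emptyset \subseteq \tr_{(d,L)}(\lambda^{(i)})$, meaning all parts are nonnegative. The proposition also explicitly asserts that the statements hold in the non-skew setting, so this step can be cited rather than reproved. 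I expect this bookkeeping to be the only mildly delicate step.

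\emph{Step 3.} Apply Theorem~\ref{thm:cRS} with parameters $(L,d)$ to the pair $(\tr_{(d,L)}(P),\tr_{(d,L)}(Q))$. This gives a permutation in $S_n^{(L,d)}$.

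Each step is a bijection: the inverse uses $\tr_{(L,d)}$, by Proposition~\ref{prop:cyl_conjugation}(i). Hence the composite is a bijection $S_n^{(d,L)}\to S_n^{(L,d)}$. Since conjugation is applied identically to $P$ and $Q$, pairs with $P=Q$ map to pairs with equal entries. So the bijection restricts to involutions, giving $|I_n^{(d,L)}|=|I_n^{(L,d)}|$. Alternatively, the involution count follows directly from the corollary equating involutions in $S_n^{(d,L)}$ with $(d,L)$-cylindric standard Young tableaux of size $n$, combined with the conjugation bijection on these tableaux.
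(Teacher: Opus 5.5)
Your proposal is correct and is the paper's proof: cylindric RS for $(d,L)$, then termwise cylindric conjugation $\tr_{(d,L)}$ via Proposition~\ref{prop:cyl_conjugation_tableaux}, then cylindric RS for $(L,d)$, with involutions handled by noting that pairs $(P,P)$ map to pairs of the same form. Your extra bookkeeping for the non-skew setting is a valid justification of what the paper simply cites; that bookkeeping is that $\tr_{(d,L)}(\emptyset)=\emptyset$, and that all parts stay nonnegative by Proposition~\ref{prop:cyl_conjugation}(iii).
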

\begin{proof}
    By the cylindric RS correspondence, permutations in $S_n^{(d,L)}$ are in bijection with pairs of $(d,L)$-cylindric standard Young tableaux that have a common shape and are of size $n$. Cylindric conjugation turns this into a pair of $(L,d)$-cylindric standard Young tableaux, which then correspond to permutations in $S_n^{(L,d)}$ under the cylindric RS correspondence. This bijection also restricts to involutions by the same argument as the previous proof.
\end{proof}

This corollary establishes the Wilf-equivalence of the pattern sets $\{d\cdots 1 (d+1), 1\cdots (L+1)\}$ and $\{L \cdots 1 (L+1), 1 \cdots (d+1)\}$. The proof also easily generalizes to non-square Young diagrams, which shows that these pattern sets are shape-Wilf-equivalent as well. This Wilf-equivalence does not seem to have appeared in the literature before. 

It is worth noting that there is another proof of Corollary \ref{cor:wilf} that uses previously available tools rather than the cylindric RS correspondence. We briefly summarize this alternative approach now.

\begin{proof}[Alternative proof of Corollary \ref{cor:wilf}]
    First, by combining the oscillating RS correspondence with conjugation, one can prove the shape-Wilf-equivalence of the pattern sets $\{d\cdots 1, 1\cdots L\}$ and $\{L \cdots 1, 1 \cdots d\}$. This result is essentially stated in \cite[Thm. 3]{krattenthaler2006}, but not in the language of pattern-avoidance.

    Next, it is a general fact that shape-Wilf-equivalence is well-behaved with respect to ``exchanging prefixes''. In particular, the shape-Wilf-equivalence given in the previous paragraph automatically implies the shape-Wilf-equivalence of the pattern sets $\{d\cdots 1 (d+1), 1\cdots L(L+1)\}$ and $\{L \cdots 1 (L+1), 1 \cdots d(d+1)\}$. This prefix-exchange property was first proved in \cite[Thm. 2.3]{backelin2007} for singleton sets of patterns, but the proof easily carries over to sets with multiple patterns (see \cite[Thm. 2]{burstein2025hkz}). It can also be adapted to the setting of symmetric fillings of Young diagrams (see \cite[Thm. 3.1]{jaggard2002}), which proves the involution case of the corollary.
\end{proof}

\subsection{Asymptotics} \label{sec:asymptotics}
In this section we derive asymptotics for $|S_n^{(d,L)}|$ as $n\to \infty$. For the sake of comparison, we first state the analogous result for $|S_n^{(\infty,L)}|$, which is due to Regev \cite{regev1981}.

\begin{theorem}[{see \cite[Thm. 7]{stanley2007}}] \label{thm:regev}
For any fixed positive integer $L$, we have
\[|S_n^{(\infty,L)}| \sim C_{L}\frac{L^{2n}}{n^{(L^2-1)/2}} \;\text{ as } n\to \infty,
\]
where 
\[
C_L = 1! 2! \cdots (L-1)! \left(\frac{1}{\sqrt{2\pi}}\right)^{L-1} \left(\frac{1}{2}\right)^{(L^2-1)/2} L^{L^2/2}.
\]
\end{theorem}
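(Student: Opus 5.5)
By the classical RS correspondence, a permutation avoids $1\cdots(L+1)$ exactly when its RS shape has first row of length at most $L$. Conjugating the shape (or equivalently reversing the permutation, as in the proof of Corollary~\ref{cor:wilf_infinite}) gives
\[
|S_n^{(\infty,L)}| = \sum_{\lambda \vdash n,\ \ell(\lambda)\le L} (f^\lambda)^2,
\]
where $f^\lambda$ is the number of standard Young tableaux of shape $\lambda$. So the plan is to evaluate this finite-dimensional sum asymptotically. It is a sum over the lattice points of an $(L-1)$-dimensional simplex, and the summand is sharply peaked near the balanced shape $\lambda_i \approx n/L$.

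First I would rewrite $f^\lambda$ using the hook length formula in Frobenius form. With $l_i = \lambda_i + L - i$ for $1\le i\le L$,
\[
f^\lambda = n!\,\frac{\prod_{i<j}(l_i-l_j)}{\prod_{i=1}^L l_i!}.
\]
Then I would substitute $\lambda_i = n/L + x_i\sqrt{n}$, where $\sum_i x_i = 0$ up to $O(n^{-1/2})$ corrections. Applying Stirling's formula to $n!$ and to each $l_i!$ should give, uniformly for $|x|\le n^{\epsilon}$,
\[
\frac{n!}{\prod_i l_i!} \approx L^{n}\,(2\pi)^{-(L-1)/2}\, L^{L/2}\, n^{-(L-1)/2 - \binom{L}{2}}\, L^{\binom{L}{2}}\exp\!\Big(-\tfrac{L}{2}\sum_i x_i^2\Big).
\]
Here the exponent $-\binom{L}{2}$ comes from the shift $L-i$ inside the factorials, which contributes a factor $(n/L)^{-(L-i)}$. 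For the Vandermonde factor I would use $\prod_{i<j}(l_i-l_j)\approx n^{\binom L2/2}\prod_{i<j}(x_i-x_j)$. Squaring these estimates, the summand becomes
\[
(f^\lambda)^2 \approx L^{2n}\,(\text{const})\,n^{-(L-1)-\binom L2}\,\Delta(x)^2 e^{-L|x|^2}.
\]

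Next, the sum over $\lambda$ is a Riemann sum on the hyperplane $\sum x_i=0$ with mesh $n^{-1/2}$. Converting it to an integral contributes a factor $n^{(L-1)/2}$, which leaves total power $n^{-(L-1)/2-\binom L2} = n^{-(L^2-1)/2}$, as the theorem requires. The constant $C_L$ then reduces to a Mehta-type integral:
\[
\int_{\sum x_i=0}\prod_{i<j}(x_i-x_j)^2 e^{-L\sum x_i^2}\,dx.
\]
To evaluate it, I would first compute the full Mehta integral $\int_{\R^L}\Delta^2 e^{-|x|^2/2}dx=(2\pi)^{L/2}\prod_{j=1}^{L} j!$. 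Then I would split off the centre-of-mass direction, along which $\Delta$ is invariant, so that the one-dimensional Gaussian factor divides out, and finally rescale by $\sqrt{2L}$. Combining these pieces with the Stirling constants should reproduce $1!\cdots(L-1)!\,(2\pi)^{-(L-1)/2}2^{-(L^2-1)/2}L^{L^2/2}$. This is bookkeeping, but it needs care.

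The main obstacle is justifying the passage from the sum to the integral. Concretely, the contribution of shapes with $|x|>n^{\epsilon}$ must be negligible, and the Stirling approximation must be uniform on the central region. For the tail I would use the crude bound $\frac{n!}{\prod l_i!}\le L^{n}\exp(-c\,|x|^2)\cdot\mathrm{poly}(n)$, which follows from the entropy bound for multinomial coefficients. The central region $|x|\le n^{\epsilon}$ is handled by Taylor expanding $\log$ of the factorials to second order with $O(n^{3\epsilon-1/2})$ error. The Riemann-sum convergence is then standard, since the integrand is smooth with Gaussian decay.
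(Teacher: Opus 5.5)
Your proposal is correct, and it takes a different route from the one the paper indicates. The paper gives no proof of this theorem; it cites Regev (via Stanley) and only sketches a random-matrix argument. That argument uses Rains' identity $|S_n^{(\infty,L)}|=\mathbb{E}|\tr U|^{2n}$ for Haar-distributed $U\in U(L)$, writes this as a torus integral via the Weyl integration formula, and applies Laplace's method near the diagonal $\theta_1=\cdots=\theta_L$. Your argument (hook-length formula, Stirling, Riemann sum, Mehta integral) is essentially Regev's original method, which the paper explicitly contrasts with the Rains route.

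Both routes end at the same Gaussian--Vandermonde integral on the hyperplane orthogonal to $(1,\ldots,1)$.
\begin{itemize}
\item Yours reaches it through a discrete local limit. It is elementary and self-contained, and it needs neither Rains' theorem nor the Weyl integration formula.
\item The paper prefers the torus-integral route because it carries over to the cylindric setting. There \eqref{eq:patternavoidcount} plays the role of a roots-of-unity analogue of the Weyl integral. The maximizing configurations have distinct $t_j$, so the Vandermonde does not vanish at the maximum. That is why Theorem~\ref{thm:asymptotics} has no polynomial correction factor.
\item Your route rests on the hook-length product formula, so it does not transfer as directly.
\end{itemize}

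The bookkeeping you flagged does work out, but two factors have to be tracked explicitly.
\begin{itemize}
\item The sum runs over partitions, so the Riemann sum approximates the integral over the chamber $x_1\ge\cdots\ge x_L$. That is $1/L!$ times the integral over the whole hyperplane $\{\sum_i x_i=0\}$ that you wrote down. This factor is what turns the $\prod_{j=1}^{L} j!$ from Mehta's formula into $1!\cdots(L-1)!$.
\item The lattice $\{\lambda\in\Z^L:\sum_i\lambda_i=n\}$ has covolume $\sqrt{L}$ relative to the induced Lebesgue measure on the hyperplane, which contributes a factor $L^{-1/2}$.
\end{itemize}
With these, the constant is $(2\pi)^{-(L-1)}L^{L^2}\cdot\frac{1}{L!}\cdot L^{-1/2}\,(2\pi)^{(L-1)/2}\prod_{j=1}^{L}j!\,(2L)^{-(L^2-1)/2}$, which simplifies exactly to $C_L$. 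As sanity checks, $L=1$ gives $1$ and $L=2$ gives $\pi^{-1/2}$, matching the Catalan asymptotics.

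Two small remarks. First, $\sum_i x_i=0$ holds exactly, not just up to $O(n^{-1/2})$. Second, your tail bound is fine as stated: $n!/\prod_i l_i!\le n!/\prod_i\lambda_i!\le L^n e^{-nD(p\,\|\,u)}\le L^n e^{-|x|^2/2}$ by Pinsker's inequality, where $p=\lambda/n$ and $u$ is the uniform distribution.
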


This asymptotic also implies the much weaker estimate $|S_n^{(\infty,L)}| = L^{2n+o(1)}$ as $n\to \infty$. Thus, the number of permutations in $S_n$ that avoid $1\cdots (L+1)$ grows exponentially as $n\to\infty$, and the growth rate is $L^2$. In the language of pattern-avoiding permutations this growth rate is called the \emph{Stanley--Wilf limit} (see \cite[Sec. 6.1.4]{kitaev2011}) of the pattern $1\cdots (L+1)$.

One approach to proving Theorem \ref{thm:regev} is to use a certain connection between $|S_n^{(\infty,L)}|$ and random matrix theory. A theorem of Rains \cite{rains1998} says that $|S_n^{(\infty,L)}|$ is equal to the expected value of $|\tr(U)|^{2n}$ where $U$ is a random $L\times L$ matrix distributed according to Haar measure on the unitary group. (This random matrix distribution is also known as the \emph{circular unitary ensemble}.) The expected value of $|\tr(U)|^{2n}$ can be written as an integral over the $L$-torus using the Weyl integration formula, and the asymptotics can then be derived using standard techniques.

The method described above is quite different from Regev's approach, but it is important from our perspective because we will use an analogous method to derive asymptotics for $|S_n^{(d,L)}|$. The following theorem is the main result of this section.

\begin{theorem} \label{thm:asymptotics}
Fix $d, L \in \N$, and let $M = d + L$. We have
\[
|S_n^{(d,L)}| \sim C_{d,L} \cdot \left(\frac{\sin(\pi d/M)}{\sin(\pi/M)}\right)^{2n} \;\text{ as } n\to \infty,
\]
where
\[C_{d,L} = \frac{1}{M^{d-1}} \prod_{j = 1}^{d-1} \left(4 \sin^2(\pi j/M)\right)^{d-j}.\]
\end{theorem}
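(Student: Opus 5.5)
By the cylindric RS correspondence (Theorem~\ref{thm:cRS}),
\[
|S_n^{(d,L)}| = \sum_{\lambda} f_{(d,L)}(\lambda)^2,
\]
where $f_{(d,L)}(\lambda)$ is the number of $(d,L)$-cylindric standard Young tableaux of shape $\lambda$ and size $n$. The plan is to express this sum of squares as a matrix-integral or Fourier-type sum, in analogy with Rains' formula for $|S_n^{(\infty,L)}|$, and then extract asymptotics by a saddle point argument.

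\textbf{Step 1: reduce to a walk on a finite group.} A $(d,L)$-cylindric standard tableau is a walk $\emptyset\prec_{(d,L)}\lambda^{(1)}\prec_{(d,L)}\cdots$ that adds one box at a time. Recording the shifted vector $(\lambda_i - i)$ modulo the cylindric translation, this is a walk of a $d$-tuple of strictly ordered positions on a circle of $M=d+L$ sites. Concretely, the $(d,L)$-interlacing constraint $\lambda_d\ge \lambda_1-L$ says exactly that the $d$ particles never collide on $\Z/M\Z$, which is the alcove of level $L$ for $\widehat{\mathfrak{sl}}_d$ / $U(d)$. So $\sum_\lambda f(\lambda)^2$ counts walks of length $2n$ starting and ending at the minimal configuration modulo the overall rotation, where the first $n$ steps move a particle forward and the last $n$ move one backward. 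It also counts closed walks on the unrotated configuration space, after correcting for the global shift $|\lambda|$, which is forced to match.

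\textbf{Step 2: diagonalize via the discrete Weyl/Verlinde formula.} By the reflection principle (Gessel--Zeilberger for affine Weyl groups, equivalently the Verlinde formula for $SU(d)_L$ or $U(d)$ at level $L$), the transfer operator $A$ ``add a box'' on this alcove is simultaneously diagonalized by the characters evaluated at the points $\zeta=e^{2\pi i(\mu_j+\rho_j)/M}$. Its eigenvalue at the label $\mu$ is
\[
s_{(1)}(\zeta)=\sum_j \zeta_j .
\]
The backward operator is $A^{*}$, so
\[
|S_n^{(d,L)}| = \langle \delta_\emptyset, (A A^*)^n\delta_\emptyset\rangle = \sum_{\mu} |S_{\emptyset,\mu}|^2\, |p_1(\zeta_\mu)|^{2n},
\]
where $S$ is the unitary $S$-matrix. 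This is a finite analogue of Rains' CUE integral: the Haar integral over the $L$-torus is replaced by a discrete sum over $d$-element subsets of the $M$-th roots of unity, weighted by the squared Vandermonde determinant over $M^d$, together with an appropriate $1/M$ factor from the central $U(1)$. I expect this to be the main obstacle: pinning down the exact normalization and the treatment of the $U(1)$ part so that the weights are correct. I would first verify it for $d=1$, where $|S_n^{(1,L)}|=1$ for $n\le L$, and for $L=1$, and I would try to quote the companion paper \cite{dobner2026} for the identity if it is stated there.

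\textbf{Step 3: asymptotics.} The sum is finite, so it is dominated by the terms maximizing $|\sum_{j}\zeta_j|$ over $d$-subsets of the $M$-th roots of unity. The maximum is attained by $d$ consecutive roots, which gives
\[
\left|\sum_{k=0}^{d-1} e^{2\pi i k/M}\right| = \frac{\sin(\pi d/M)}{\sin(\pi/M)}.
\]
There are $M$ such subsets, one for each rotation, and they all yield the same modulus. Modulo the $U(1)$ identification they contribute $M$ equal terms, and any other subset gives a strictly smaller modulus. The constant is then the Vandermonde weight at consecutive roots:
\[
\prod_{j<k}|e^{2\pi i j/M}-e^{2\pi i k/M}|^2=\prod_{j=1}^{d-1}(4\sin^2(\pi j/M))^{d-j},
\]
multiplied by the normalization, for a total factor $1/M^{d-1}$ after combining the $M$ rotations with $1/M^{d}$. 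This matches $C_{d,L}$ and would confirm the normalization chosen in Step 2.
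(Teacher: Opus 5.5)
Your outline follows the paper's proof. The cylindric RS correspondence gives $|S_n^{(d,L)}|=\sum_\lambda f_{(d,L)}(\lambda)^2$, this is identified with the finite sum \eqref{eq:patternavoidcount}, and the asymptotics come from the $M$ sets of $d$ consecutive $M$th roots of unity (the paper's $M\,d!$ ordered tuples, each of weight $B_0$). The paper does not derive your Step~2 at all. It cites Theorem~\ref{thm:patternavoidcount} from the companion paper, which is your stated fallback, so with that citation your argument is essentially complete.

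If you want Step~2 to stand on its own, the normalization must be fixed independently. Confirming it by matching $C_{d,L}$ is circular, since $C_{d,L}$ is what you are proving.

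\begin{itemize}
\item The correct weight is exactly $M^{-d}|\Delta(\zeta)|^2$ for each $d$-subset of $M$th roots of unity. There is no additional central factor $1/M$; including one, as Step~2 suggests, would give $C_{d,L}/M$.
\item This follows from your own setup. Put $S_\lambda=\{\lambda_i-i \bmod M\}$. A staircase is determined by $S_\lambda$ together with $|\lambda|$, so
\[
\sum_{|\lambda|=n}f(\lambda)^2=\bigl(A^n(A^{T})^n\bigr)_{S_0,S_0}.
\]
This counts walks returning to $S_0$ itself, not to a rotation of it.
\item The hopping operator $A$ on $d$-subsets of $\Z/M\Z$ is normal. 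Its orthonormal eigenvectors are $M^{-d/2}\det(\zeta_j^{x_k})$, where the $\zeta_j$ are distinct $M$th roots of $(-1)^{d-1}$, and the eigenvalues have modulus $|\sum_j\zeta_j|$. The twist is a common rotation, which changes neither $|\Delta|$ nor $|\sum_j\zeta_j|$. Evaluating at $S_0$ then gives exactly $M^{-d}|\Delta(\zeta)|^2$.
\item Normality is also what justifies writing $(AA^*)^n$ in place of $A^n(A^*)^n$.
\end{itemize}

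Your claim that every non-consecutive subset gives a strictly smaller modulus also needs proof. The paper's swap argument supplies it. Let $\theta=\arg\sum_{\zeta\in S}\zeta$. The $d$-subsets maximizing $\mathrm{Re}\bigl(e^{-i\theta}\sum_{\zeta\in S}\zeta\bigr)$ are those consisting of $d$ roots closest to $e^{i\theta}$, which are consecutive. Hence a non-consecutive $S$ can be strictly improved by exchanging one root, and if $\sum_{\zeta\in S}\zeta=0$ the inequality is trivial. Finally, $|S_n^{(1,L)}|=1$ for every $n$, not only for $n\le L$.
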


To prove Theorem~\ref{thm:asymptotics}, we will need the following result from our work in \cite{dobner2026}.

\begin{theorem}[\cite{dobner2026}] \label{thm:patternavoidcount}
    Let $d,L,n \in \N$, and let $M = d+L$. Let $e(x) = e^{2\pi i x}$. Then the quantity
    \begin{equation}\label{eq:patternavoidcount}
    \frac{1}{M^d} \frac{1}{d!}\sum_{t_1,\ldots,t_d=1}^{M } \left|\sum_{l=1}^d e(\tfrac{t_l}{M})\right|^{2n} \prod_{1\leq j < k \leq d} |e(\tfrac{t_k}{M})-e(\tfrac{t_j}{M})|^2
    \end{equation}
    is equal to the number of pairs $(P,Q)$ of $(d,L)$-cylindric standard Young tableaux with $\sh(P) = \sh(Q)$ and $|\sh(P)| = n$.
\end{theorem}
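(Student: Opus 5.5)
The plan is to interpret both sides as counts of closed lattice walks and to diagonalize a transfer operator using affine alternants. This is the standard reflection-principle/Fourier approach for walks in a Weyl alcove (as in Gessel--Zeilberger and Grabiner).

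\emph{Step 1: reduce to walks.} I would first show that a $(d,L)$-cylindric standard Young tableau is the same thing as a sequence of $d$-partitions that adds one box at each step and in which every partition satisfies $\lambda_1-\lambda_d\le L$. Let $\beta=\alpha+e_i$. If $i\neq d$, the condition $\beta_1-\alpha_d\le L$ reads $\beta_1-\beta_d\le L$. If $i=d$, it reads $\alpha_1-\alpha_d\le L$, which already holds for $\alpha$ by induction from $\emptyset$. Now set $x_k=\lambda_k+d-k$. The admissible shapes then become the integer points of the open alcove
\[
A=\{x\in\Z^d : x_1>x_2>\cdots>x_d>x_1-M\},
\]
and adding a box in row $k$ is the step $x\mapsto x+e_k$. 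Let $U$ denote the operator on functions on $A$ that adds a box, killing any walk that leaves $A$. Then $\sum_\lambda f_\lambda^2=\langle \delta_{x^0},U^n(U^*)^n\delta_{x^0}\rangle$, where $x^0=(d-1,\ldots,0)$ and $f_\lambda$ is the number of cylindric standard tableaux of shape $\lambda$. This is exactly the number of pairs $(P,Q)$ in the theorem.

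\emph{Step 2: eigenfunctions.} For $t=(t_1,\ldots,t_d)$, define
\[
\phi_t(x)=\det\big(e(t_jx_k/M)\big)_{j,k}.
\]
This function is antisymmetric under permuting coordinates and invariant under $x_k\mapsto x_k+M$. Hence it is antisymmetric under the whole affine Weyl group of type $\tilde A_{d-1}$, whose fundamental domain is $A$. In particular $\phi_t$ vanishes on the walls $x_j=x_k$ and $x_1=x_d+M$. So the reflection principle applies, and the unrestricted step operator $\sum_k(\text{shift by }e_k)$ acts on $\phi_t$ exactly as $U$ does. The unrestricted operator multiplies $\phi_t$ by $\sum_l e(t_l/M)$, and its adjoint multiplies by the conjugate. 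Consequently $U$ and $U^*$ are simultaneously diagonal in the $\phi_t$, and $U^n(U^*)^n$ has eigenvalue $|\sum_l e(t_l/M)|^{2n}$.

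\emph{Step 3: expand $\delta_{x^0}$ and read off the formula.} Work on residues modulo $M$. A point of $A$ has pairwise distinct residues, and the set of residues together with $|\lambda|$ determines $x$. A closed walk of length $2n$ from $x^0$ therefore corresponds bijectively to its projected closed walk on $d$-subsets of $\Z/M$. On $(\Z/M)^d$ we have discrete Fourier orthogonality, so the $\phi_t$ with distinct $t$, taken modulo permutation, form an orthogonal basis of the antisymmetric functions. Each has squared norm $M^dd!$ over all of $(\Z/M)^d$, which is $M^d$ over one alcove representative per orbit; the $\tfrac1{d!}$ in the theorem then accounts for summing over ordered $t$. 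Expanding $\delta_{x^0}$ in this basis gives
\[
\tfrac{1}{M^d d!}\sum_t |\phi_t(x^0)|^2\Big|\sum_l e(t_l/M)\Big|^{2n}.
\]
Finally, by the Vandermonde identity, $|\phi_t(x^0)|^2=\prod_{j<k}|e(t_k/M)-e(t_j/M)|^2$. Terms with repeated $t_l$ vanish, so summing over all $t\in\{1,\ldots,M\}^d$ is harmless. This yields \eqref{eq:patternavoidcount}.

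\emph{Main obstacle.} The delicate step is Step 3: justifying the passage from the infinite alcove in $\Z^d$ (shapes can grow without bound) to a finite space, and getting the normalization exactly right. I would handle it by checking that the lift of a residue walk is unique, since each step increases exactly one coordinate by $1$, and that closedness is preserved under the lift because the numbers of forward and backward steps are equal. I would then verify the constant $M^{-d}$ directly on the case $n=0$, where the sum must equal $1$.
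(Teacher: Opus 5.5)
The paper does not prove this statement. It is quoted from the companion paper \cite{dobner2026}, so there is no in-paper argument to compare with, and I am judging your proposal on its own terms.

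Steps 1 and 2 are sound:
\begin{itemize}
\item standard $(d,L)$-cylindric tableaux are exactly box-adding walks through $(d,L)$-partitions;
\item any step out of $A$ lands on a wall where $\phi_t$ vanishes;
\item $\phi_t|_A$ is an eigenfunction of $U$ and $U^*$ with eigenvalues $\omega_t=\sum_l e(t_l/M)$ and $\overline{\omega_t}$.
\end{itemize}
Your bijection between closed walks in $A$ from $x^0$ and closed walks on $d$-subsets of $\Z/M$ is also correct.

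The gap is the next step, where you expand in the $\phi_t$ on the finite space. For even $d$, the $\phi_t$ do \emph{not} diagonalize the unsigned walk on $d$-subsets that you are counting.
\begin{itemize}
\item The points of $A$ over a fixed residue set include $x$ and $\rho(x)=(x_d+M,x_1,\ldots,x_{d-1})$. Their residue tuples differ by a $d$-cycle, so $\phi_t(\rho x)=(-1)^{d-1}\phi_t(x)$, and $\phi_t$ does not descend to a function of the residue set.
\item Equivalently, identify functions on $d$-subsets with antisymmetric functions on $(\Z/M)^d$ via decreasing representatives in $\{0,\ldots,M-1\}$. Then the operator $\sum_k\tau_{e_k}$ that the $\phi_t$ diagonalize is the subset walk with a factor $(-1)^{d-1}$ each time a particle crosses between $M-1$ and $0$.
\item Concretely, take $d=2$, $M=3$. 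The unsigned forward walk on the three $2$-subsets of $\Z/3$ is a $3$-cycle with eigenvalues $1,e(1/3),e(2/3)$. The $\omega_t$, however, are $-1,-e(1/3),-e(2/3)$.
\end{itemize}
So your expansion of $\delta_{x^0}$ computes a \emph{signed} count of closed walks. Your $n=0$ normalization check cannot detect the difference.

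What is missing is a lemma saying the sign is trivial for the walks in question. This is where the equality of forward and backward steps really has to be used. One proof:
\begin{itemize}
\item A closed walk on subsets returns every occupation number to its initial value.
\item Hence its net flux (forward minus backward crossings) is the same across every edge of the cycle $\Z/M$.
\item These $M$ equal fluxes sum to the total displacement $n-n=0$, so each is zero.
\item Therefore the edge $\{M-1,0\}$ is crossed an even number of times, and the sign is $+1$.
\end{itemize}

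Alternatively, run the Fourier computation with labelled tuples. Let $S=\sum_k\tau_{e_k}$ on $(\Z/M)^d$, $y^0=x^0 \bmod M$, and $\delta^{\mathrm{as}}_{y^0}=\sum_\sigma\mathrm{sgn}(\sigma)\delta_{\sigma y^0}$. Then
\[
(S^nS^{*n}\delta^{\mathrm{as}}_{y^0})(y^0)=\sum_\sigma\mathrm{sgn}(\sigma)\,N(\sigma y^0\to y^0)=\frac{1}{M^d d!}\sum_t|\phi_t(y^0)|^2|\omega_t|^{2n},
\]
where $N$ counts labelled exclusion walks. Only cyclic rotations $\sigma=c^r$ can contribute. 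The points of $A$ with residue tuple $c^r y^0$ have coordinate sum congruent to $\sum_k x^0_k+rM$ modulo $dM$. A walk with zero net displacement preserves the coordinate sum, which forces $r\equiv 0 \pmod d$.

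With either version of the lemma in place, your normalization ($M^d$ per orbit representative, $1/d!$ for ordered $t$) and the Vandermonde evaluation give exactly \eqref{eq:patternavoidcount}.
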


Combining this theorem with the cylindric RS correspondence, we see that $|S_n^{(d,L)}|$ is equal to the expression in \eqref{eq:patternavoidcount}. The latter expression can be viewed as the expected value of $|\tr(U)|^{2n}$ where $U$ is a random $d\times d$ matrix drawn from a certain discrete analogue of the circular unitary ensemble. Hence, this is an exact analogue of the theorem of Rains stated above involving $|S_n^{(\infty,L)}|$.

Before giving the proof of Theorem \ref{thm:asymptotics}, let us make a few further remarks about how we discovered the connection between $|S_n^{(d,L)}|$ and random matrix theory in the first place. Prior to proving any of the results in this paper, we were investigating the sum in \eqref{eq:patternavoidcount} for other reasons. In particular, we were interested in discrete analogues of the circular unitary ensemble because of a certain connection to analytic number theory (see \cite{dobner2026} for details). While computing a table of values for these sums (for various $d$,$L$, and $n$), we discovered a match with the OEIS sequence A047849 \cite{oeis}. This sequence counts permutations in $S_n$ avoiding the patterns $3214$ and $1234$. This match was intriguing due to the similarity with the aforementioned theorem of Rains.

Upon doing further calculations, we were able to make a guess at the correct connection between our random matrix sum and pattern-avoiding permutations. In trying to prove this connection, we discovered Theorem \ref{thm:patternavoidcount} and the cylindric RS correspondence. Our results on the random matrix theory side of these investigations are detailed in \cite{dobner2026}.

\begin{proof}[Proof of Theorem \ref{thm:asymptotics}]
As noted above, Theorem \ref{thm:patternavoidcount} and the cylindric RS correspondence imply that \eqref{eq:patternavoidcount} is equal to $|S_n^{(d,L)}|$. Hence, our goal is to understand the behavior of \eqref{eq:patternavoidcount} as $n \to \infty$. If $d$ and $L$ are fixed, then \eqref{eq:patternavoidcount} can be rewritten as a sum over a fixed collection of terms of the form $B r^{2n}$, where each $B$ and $r$ depend on the values $t_1,\ldots,t_d$. In particular, for a given choice of $t_1,\ldots,t_d$ the values of $B$ and $r$ are given by
\[
r= \left| \sum_{l=1}^d e(\tfrac{t_l}{M}) \right|\; \text{ and }\; B = \frac{1}{M^d} \frac{1}{d!} \prod_{1 \leq j < k \leq d} |e(\tfrac{t_k}{M}) - e(\tfrac{t_j}{M})|^2.
\]
We must identify the configurations of $t_1,\ldots,t_d\in\{1,\ldots,M\}$ that maximize $r$ subject to the constraint that $B \neq 0$. These give the dominant terms in \eqref{eq:patternavoidcount} as $n \to \infty$.

The value of $B$ is nonzero if and only if the $t_j$'s are distinct. On the other hand, the value of $r$ is largest when the $e(\tfrac{t_l}{M})$'s are clustered together as closely as possible on the unit circle. Hence, it is intuitively clear that the (constrained) maximum value of $r$ should occur precisely when the $t_j$'s are consecutive integers modulo $M$ rearranged in any order. To give a rigorous proof of this claim, one can check that for any fixed $\theta\in\R$ it is possible to increase the component of $\sum_{l=1}^d e(\tfrac{t_l}{M})$ in the direction of $e(\theta)$ by replacing one of the roots of unity in the sum with another root of unity that is closer to $e(\theta)$. The claim follows easily from this observation.

There are $M d!$ choices for the maximal configurations of the $t_j$'s described in the previous paragraph. The values of $r$ and $B$ are the same for all of these configurations. Let $r_0$ and $B_0$ denote these values. Computing these explicitly, we find that
\[r_0 =\left|\sum_{l=1}^d e(\tfrac{l}{M})\right| = \frac{\sin(\pi d/M)}{\sin(\pi /M)},\]
and
\begin{align*}
B_0 =\frac{1}{M^d} \frac{1}{d!} \prod_{1 \leq j < k \leq d} |e(\tfrac{k}{M}) - e(\tfrac{j}{M})|^2 &= \frac{1}{M^d} \frac{1}{d!} \prod_{1 \leq j < k \leq d} 4\sin^2(\pi(k-j)/M) \\
&= \frac{1}{M^d} \frac{1}{d!}\prod_{m=1}^{d-1} \left(4\sin^2(\pi m/M)\right)^{d-m}.
\end{align*}
Putting everything together, we conclude that $S_n^{(d,L)}\sim M d! B_0 r_0^{2n}$ as $n \to \infty$, which is the desired result.
\end{proof}

A consequence of this theorem is that the Stanley--Wilf limit of the pattern set $\{d\cdots 1 (d+1), 1\cdots (L+1)\}$ is $\left(\frac{\sin(\pi d/M)}{\sin(\pi/M)}\right)^2$. Note that this quantity is unchanged when $d$ and $L$ are interchanged, which is consistent with the Wilf-equivalence established in Corollary \ref{cor:wilf}. One can also check that $C_{d,L}=C_{L,d}$, although this is less obvious from the stated formula.

\section{RS, $d$-RS, and skew $d$-RS growth diagrams} \label{sec:connections}
In this section we discuss an important special case of the growth diagrams considered in this paper. Namely, we consider the case where the sizes of adjacent partitions in these growth diagram can differ by at most 1. Any RSK, $d$-RSK, or skew $d$-RSK growth diagram that satisfies this restriction will be referred to as an \emph{RS}, \emph{$d$-RS}, or \emph{skew $d$-RS growth diagram}, respectively.

It is possible to give a direct characterization of these growth diagrams without reference to their Knuth-type generalizations. That is, they can be defined in terms of local rules that are more restrictive than the original local rules. We will call these the \emph{RS}, \emph{$d$-RS}, and \emph{skew $d$-RS local rules}, respectively.

To state these rules, let us first introduce some notation. Given a partition $\kappa$ and a positive integer $i$, we write $\kappa{+}(i)$ to denote the partition that is obtained from $\kappa$ by adding one to the $i$th part. We use the same notation for $d$-staircases. This carries the implicit assumption that performing this operation yields a valid partition or $d$-staircase.

The utility of this notation comes from the following observation: adjacent partitions (or $d$-staircases) that appear in RS, $d$-RS, or skew $d$-RS growth diagrams must either be equal or they are of the form $\kappa$ and $\kappa{+}(i)$. This follows from the fact that adjacent partitions (or $d$-staircases) in these growth diagrams are interlacing and their sizes differ by at most 1. Using this observation, it is straightforward to list all the valid cell configurations that can appear in any RS, $d$-RS, or skew $d$-RS growth diagram. These lists are what we will take as the definitions of the RS, $d$-RS, and skew $d$-RS local rules. These are shown in Figure \ref{fig:localrules}.

\begin{figure}[p]
\centering
\captionsetup[subfigure]{font=small,skip=6pt}
\setlength{\fboxsep}{4pt}
\setlength{\fboxrule}{0.4pt}
\begin{subfigure}{\textwidth}
\centering
\fbox{\begin{minipage}{\dimexpr\textwidth-2\fboxsep-2\fboxrule\relax}\centering
\begin{tikzpicture}[scale=1.15]
\begin{scope}[shift={(0,0)}]
    \draw (0,0) rectangle (1,1);
    \node at (.5,.5) {$0$};
    \node at (-.2,-.2) {$\kappa$};
    \node at (-.2,1.2) {$\kappa$};
    \node at (1.2,-.2) {$\kappa$};
    \node at (1.2,1.2) {$\kappa$};
\end{scope}
\begin{scope}[shift={(3,0)}]
    \draw (0,0) rectangle (1,1);
    \node at (.5,.5) {$0$};
    \node at (-.2,-.2) {$\kappa$};
    \node at (-.2,1.2) {$\kappa{+}(i)$};
    \node at (1.2,-.2) {$\kappa$};
    \node at (1.2,1.2) {$\kappa{+}(i)$};
\end{scope}
\begin{scope}[shift={(6,0)}]
    \draw (0,0) rectangle (1,1);
    \node at (.5,.5) {$0$};
    \node at (-.2,-.2) {$\kappa$};
    \node at (-.2,1.2) {$\kappa$};
    \node at (1.2,-.2) {$\kappa{+}(i)$};
    \node at (1.2,1.2) {$\kappa{+}(i)$};
\end{scope}
\begin{scope}[shift={(9,0)}]
    \draw (0,0) rectangle (1,1);
    \node at (.5,.5) {$0$};
    \node at (-.2,-.2) {$\kappa$};
    \node at (-.2,1.2) {$\kappa{+}(i)$};
    \node at (1.2,-.2) {$\kappa{+}(j)$};
    \node at (1.2,1.2) {$\kappa{+}(i){+}(j)$};
    \node at (.5,-.6) {Assuming $i \neq j$};
\end{scope}
\begin{scope}[shift={(1,-2.5)}]
    \draw (0,0) rectangle (1,1);
    \node at (.5,.5) {$0$};
    \node at (-.2,-.2) {$\kappa$};
    \node at (-.2,1.2) {$\kappa{+}(i)$};
    \node at (1.2,-.2) {$\kappa{+}(i)$};
    \node at (1.2,1.2) {$\kappa{+}(i + 1)$};
\end{scope}
\begin{scope}[shift={(8,-2.5)}]
    \draw (0,0) rectangle (1,1);
    \node at (.5,.5) {$1$};
    \node at (-.2,-.2) {$\kappa$};
    \node at (-.2,1.2) {$\kappa$};
    \node at (1.2,-.2) {$\kappa$};
    \node at (1.2,1.2) {$\kappa{+}(1)$};
\end{scope}
\end{tikzpicture}
\end{minipage}}
\caption{The RS local rule. In these configurations, $\kappa$ denotes a partition and $i,j$ denote natural numbers.}
\label{fig:rs_local_rule}
\end{subfigure}

\par\vspace*{1.5em}

\begin{subfigure}{\textwidth}
\centering
\fbox{\begin{minipage}{\dimexpr\textwidth-2\fboxsep-2\fboxrule\relax}\centering
\begin{tikzpicture}[scale=1.15]
\begin{scope}[shift={(0,0)}]
    \draw (0,0) rectangle (1,1);
    \node at (.5,.5) {$0$};
    \node at (-.2,-.2) {$\kappa$};
    \node at (-.2,1.2) {$\kappa$};
    \node at (1.2,-.2) {$\kappa$};
    \node at (1.2,1.2) {$\kappa$};
\end{scope}
\begin{scope}[shift={(3,0)}]
    \draw (0,0) rectangle (1,1);
    \node at (.5,.5) {$0$};
    \node at (-.2,-.2) {$\kappa$};
    \node at (-.2,1.2) {$\kappa{+}(i)$};
    \node at (1.2,-.2) {$\kappa$};
    \node at (1.2,1.2) {$\kappa{+}(i)$};
\end{scope}
\begin{scope}[shift={(6,0)}]
    \draw (0,0) rectangle (1,1);
    \node at (.5,.5) {$0$};
    \node at (-.2,-.2) {$\kappa$};
    \node at (-.2,1.2) {$\kappa$};
    \node at (1.2,-.2) {$\kappa{+}(i)$};
    \node at (1.2,1.2) {$\kappa{+}(i)$};
\end{scope}
\begin{scope}[shift={(9,0)}]
    \draw (0,0) rectangle (1,1);
    \node at (.5,.5) {$0$};
    \node at (-.2,-.2) {$\kappa$};
    \node at (-.2,1.2) {$\kappa{+}(i)$};
    \node at (1.2,-.2) {$\kappa{+}(j)$};
    \node at (1.2,1.2) {$\kappa{+}(i){+}(j)$};
    \node at (.5,-.6) {Assuming $i \neq j$};
\end{scope}
\begin{scope}[shift={(1,-2.75)}]
    \draw (0,0) rectangle (1,1);
    \node at (.5,.5) {$0$};
    \node at (-.2,-.2) {$\kappa$};
    \node at (-.2,1.2) {$\kappa{+}(i)$};
    \node at (1.2,-.2) {$\kappa{+}(i)$};
    \node at (1.35,1.2) {$\kappa{+}(i){+}(i{+}1)$};
    \node at (.5,-.6) {Assuming $i \leq d-1$};
\end{scope}
\begin{scope}[shift={(4.5,-2.75)}]
    \draw (0,0) rectangle (1,1);
    \node at (.5,.5) {$0$};
    \node at (-.2,-.2) {$\kappa$};
    \node at (-.2,1.2) {$\kappa{+}(d)$};
    \node at (1.2,-.2) {$\kappa{+}(d)$};
    \node at (1.2,1.2) {$\kappa{+}(d){+}(1)$};
\end{scope}
\begin{scope}[shift={(8,-2.75)}]
    \draw (0,0) rectangle (1,1);
    \node at (.5,.5) {$1$};
    \node at (-.2,-.2) {$\kappa$};
    \node at (-.2,1.2) {$\kappa$};
    \node at (1.2,-.2) {$\kappa$};
    \node at (1.2,1.2) {$\kappa{+}(1)$};
    \node at (.5,-.6) {Assuming $\kappa_d=0$};
\end{scope}
\end{tikzpicture}
\end{minipage}}
\caption{The $d$-RS local rule. Here $\kappa$ is required to be a $d$-partition and $i,j \leq d$. The Bloom--Saracino local rule is the same except that ``Assuming $\kappa_d=0$'' is not required in the last configuration.}
\label{fig:drs_local_rule}
\end{subfigure}

\par\vspace*{1.5em}

\begin{subfigure}{\textwidth}
\centering
\fbox{\begin{minipage}{\dimexpr\textwidth-2\fboxsep-2\fboxrule\relax}\centering
\begin{tikzpicture}[scale=1.15]
\begin{scope}[shift={(0,0)}]
    \draw (0,0) rectangle (1,1);
    \node at (.5,.5) {$0$};
    \node at (-.2,-.2) {$\kappa$};
    \node at (-.2,1.2) {$\kappa$};
    \node at (1.2,-.2) {$\kappa$};
    \node at (1.2,1.2) {$\kappa$};
\end{scope}
\begin{scope}[shift={(3,0)}]
    \draw (0,0) rectangle (1,1);
    \node at (.5,.5) {$0$};
    \node at (-.2,-.2) {$\kappa$};
    \node at (-.2,1.2) {$\kappa{+}(i)$};
    \node at (1.2,-.2) {$\kappa$};
    \node at (1.2,1.2) {$\kappa{+}(i)$};
\end{scope}
\begin{scope}[shift={(6,0)}]
    \draw (0,0) rectangle (1,1);
    \node at (.5,.5) {$0$};
    \node at (-.2,-.2) {$\kappa$};
    \node at (-.2,1.2) {$\kappa$};
    \node at (1.2,-.2) {$\kappa{+}(i)$};
    \node at (1.2,1.2) {$\kappa{+}(i)$};
\end{scope}
\begin{scope}[shift={(9,0)}]
    \draw (0,0) rectangle (1,1);
    \node at (.5,.5) {$0$};
    \node at (-.2,-.2) {$\kappa$};
    \node at (-.2,1.2) {$\kappa{+}(i)$};
    \node at (1.2,-.2) {$\kappa{+}(j)$};
    \node at (1.2,1.2) {$\kappa{+}(i){+}(j)$};
    \node at (.5,-.6) {Assuming $i \neq j$};
\end{scope}
\begin{scope}[shift={(1,-2.75)}]
    \draw (0,0) rectangle (1,1);
    \node at (.5,.5) {$0$};
    \node at (-.2,-.2) {$\kappa$};
    \node at (-.2,1.2) {$\kappa{+}(i)$};
    \node at (1.2,-.2) {$\kappa{+}(i)$};
    \node at (1.35,1.2) {$\kappa{+}(i){+}(i{+}1)$};
    \node at (.5,-.6) {Assuming $i \leq d-1$};
\end{scope}
\begin{scope}[shift={(4.5,-2.75)}]
    \draw (0,0) rectangle (1,1);
    \node at (.5,.5) {$0$};
    \node at (-.2,-.2) {$\kappa$};
    \node at (-.2,1.2) {$\kappa{+}(d)$};
    \node at (1.2,-.2) {$\kappa{+}(d)$};
    \node at (1.2,1.2) {$\kappa{+}(d){+}(1)$};
\end{scope}
\end{tikzpicture}
\end{minipage}}
\caption{The skew $d$-RS local rule. In these configurations, $\kappa$ is a $d$-staircase and $i,j \leq d$. The Elizalde local rule is the same except that $\kappa$ is a $(d,L)$-staircase, and the first three configurations are removed.}
\label{fig:elizalde_local_rule}
\end{subfigure}

\caption{Local rules appearing in Section~\ref{sec:connections}.}
\label{fig:localrules}
\end{figure}
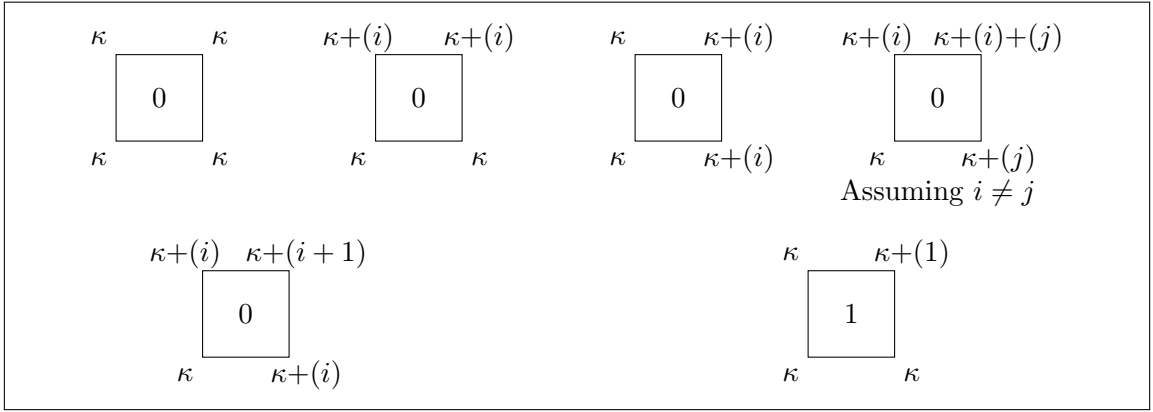
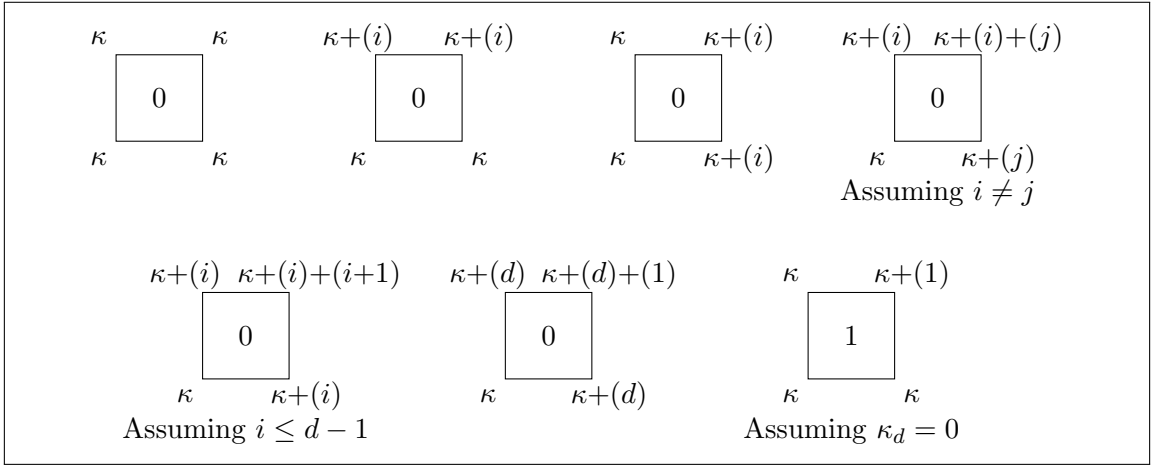
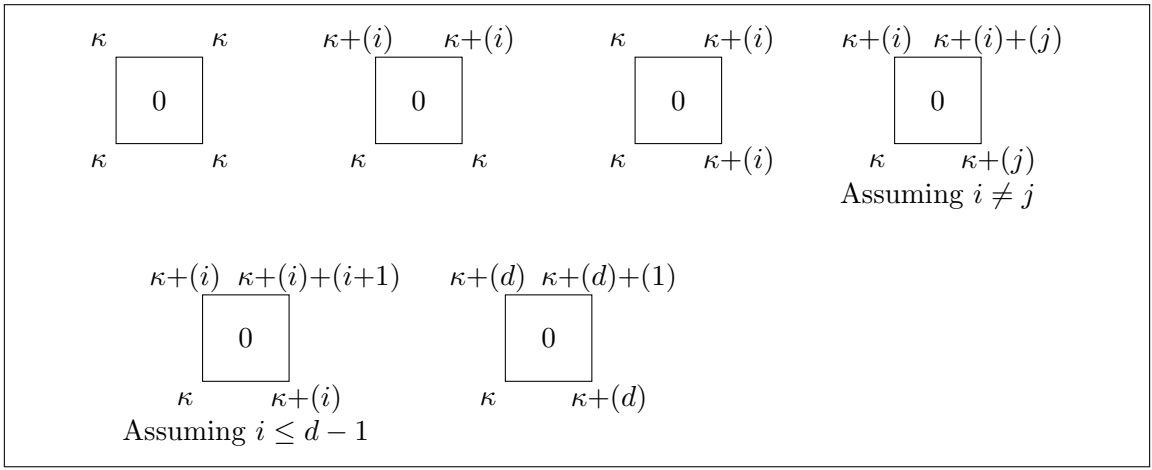

RS, $d$-RS, and skew $d$-RS growth diagrams lead to special cases of the correspondences that we have described in this paper. We call these the \emph{oscillating RS, $d$-RS, and skew $d$-RS correspondences}, respectively. In comparison with their Knuth-type generalizations, the tableaux are now restricted to those with weight vectors consisting of 0's and 1's. The fillings are restricted to those whose row and column sums are all 0 or 1.

The oscillating RS correspondence is well known (see \cite[Thm. 1]{krattenthaler2006}, \cite[Ch. 4]{roby1991}). The oscillating skew $d$-RS correspondence was essentially constructed by  Elizalde \cite{elizalde2025} as we discussed in Section \ref{subsec:skewdRSKconnections}. Elizalde was interested in the case of tableaux of that are $(d,L)$-cylindric and standard, but aside from this minor difference the configurations shown in Figure \ref{fig:elizalde_local_rule} are equivalent to those given in \cite[Sec. 5]{elizalde2025}. 

The oscillating $d$-RS correspondence appears in work of Bloom and Saracino \cite{bloom2012s}. However, they did not describe it exactly as we have. Bloom and Saracino considered a local rule that is more general than the $d$-RS local rule (see the caption of Figure \ref{fig:drs_local_rule}). They used this to construct a map from fillings of Young diagrams to oscillating tableaux. Let us call this the \emph{Bloom--Saracino map} (it is denoted by $\mathrm{seq}_k$ in \cite{bloom2012s} where $k=d+1$ in our notation). 

The domain of the Bloom--Saracino map is the set of all fillings of Young diagrams whose row and column sums are at most 1. It is not a bijective map, but it becomes a bijection (the oscillating $d$-RS correspondence) upon restricting to fillings that avoid $d\cdots 1 (d+1)$. This is not stated explicitly in \cite{bloom2012s}, but it is an immediate consequence of the main result given there. In fact, Bloom and Saracino's work goes further by giving a characterization of the fibers of the Bloom--Saracino map. In particular they show that the map is invariant under a certain transformation on fillings.

Bloom and Saracino's motivation for constructing their map was to give a growth diagram formulation of the Backelin--West--Xin bijection \cite{backelin2007} that we mentioned in Section \ref{sec:fillings_cors}. Their main result implies that the Backelin--West--Xin bijection is obtained by composing the oscillating RS and $d$-RS correspondences. Hence, the map that we constructed in Section~\ref{sec:fillings_cors} by composing the oscillating RSK and $d$-RSK correspondences is a natural extension of this.

In light of our work in this paper, it would be interesting to know whether Bloom and Saracino's results have some further significance in relation to cylindric tableaux. For example, it seems possible that a further study of the Bloom--Saracino map could be fruitful for developing a notion of Knuth equivalence for cylindric tableaux.

\section{Continuous piecewise-linear analogues} \label{sec:continuous}
The RSK-type correspondences that we have described in this paper also have continuous piecewise-linear analogues. For the classical RSK correspondence, such an analogue is well known (see \cite[Sec. 1]{bisi2021oz} for a discussion of this). We shall briefly describe how these analogues work and how they carry over to $d$-RSK (or skew $d$-RSK).

In the continuous piecewise-linear setting, we need to modify our earlier definitions to allow for real-valued quantities. For example, the fillings of Young diagrams are now allowed to have nonnegative \emph{real} entries. The notion of NE-chains and se-chains carry over to this setting in a natural way. In particular, the length of a NE-chain is now a continuous quantity (the sum of the entries in the chain), while the length of a se-chain remains discrete (the number of entries in the chain). The definition of ``avoids the pattern $d\cdots 1 (d+1)$'' remains the same. In particular $d$ is still a positive integer.

On the tableau side of the correspondences, we must replace integer partitions with \emph{real} partitions. We define these as weakly decreasing finite sequences of positive real numbers. The size of a real partition is now a continuous quantity (the sum of the parts), whereas the length remains discrete (the number of parts). The definition of interlacing carries over to real partitions verbatim. Hence, semistandard Young tableaux have a natural analogue in the continuous setting. The definition of $(d,L)$-interlacing also carries over verbatim, so $(d,L)$-cylindric semistandard Young tableaux also have a natural analogue in the continuous setting. The variable $L$ is now a continuous quantity.

The definitions of the RSK, $d$-RSK, and skew $d$-RSK local rules were written in terms of continuous piecewise-linear functions. Consequently, the proofs of our main results generalize with hardly any modification. This leads to continuous piecewise-linear analogues of the cylindric correspondences as well.

The most significant difference in this generalized setting is that there is no longer a natural notion of conjugation for real partitions. This means that row-strict tableaux do not fit naturally into the picture that we have just described. Interestingly, for RSK there is still a sort of dual correspondence---sometimes called the Burge correspondence---which can be defined in a continuous piecewise-linear fashion (see \cite[App. A]{betea2019}). It is unclear whether a cylindric analogue of the Burge correspondence exists. We leave this as an open question for the reader.

\bibliographystyle{amsplain} 
\bibliography{bibliography}

\end{document}